\DeclareMathOperator{\CP}{CP}
\DeclareMathOperator{\dist}{dist}
\DeclareMathOperator{\diam}{diam}
\DeclareMathOperator{\spt}{spt}
\DeclareMathOperator{\Rat}{Rat}
\newcommand{\C}{\mathbb{C}}
\newcommand{\N}{\mathbb{N}}
\newcommand{\R}{\mathbb{R}}
\theoremstyle{definition}
\newtheorem{thm}{Theorem}[section]
\newtheorem{dfn}[thm]{Definition}
\newtheorem{prp}[thm]{Proposition}
\newtheorem{lem}[thm]{Lemma}
\newtheorem{cor}[thm]{Corollary}
\newtheorem*{notation*}{Notation}
\newtheorem*{claim*}{Claim}
\newtheorem*{mra}{Main result A}
\newtheorem*{mrb}{Main result B}
\newtheorem*{mrc}{Main result C}
\newtheorem*{mrd}{Main result D}
\newtheorem*{mre}{Main result E}
\title{Bowen's formula for a rational graph-directed Markov system}
\author{Tadashi Arimitsu, Johannes Jaerisch, Hiroki Sumi and Takayuki Watanabe}
\address{(Tadashi Arimitsu) Graduate School of Mathematics, Nagoya University,
Furocho, Chikusaku, Nagoya, 464-8602, JAPAN} 
\email{m21002u@math.nagoya-u.ac.jp}
\address{(Johannes Jaerisch) Graduate School of Mathematics, Nagoya University,
Furocho, Chikusaku, Nagoya, 464-8602, JAPAN} 
\email{jaerisch@math.nagoya-u.ac.jp}
\address{(Hiroki Sumi) Graduate School of Human and Environmental Studies, Kyoto University, Yoshida Nihonmatsu-cho, Sakyo-ku, Kyoto, 606-8501, Japan}
\email{sumi@math.h.kyoto-u.ac.jp}
\address{(Takayuki Watanabe) Chubu University Academy of Emerging Sciences/Center for Mathematical Science and Artificial Intelligence. 1200 Matsumotocho, Kasugai City, Aichi Prefecture 487-8501, Japan}
\email{takawatanabe@isc.chubu.ac.jp}
\subjclass[2020]{37F10, 58F23, 30D05, 37H12}
\begin{document} 

\begin{abstract}
We establish Bowen's formula for the Julia set of a non-elementary, expanding, irreducible and aperiodic rational graph-directed Markov system satisfying the backward separating condition. 
Towards this end, we shall prove that the associated skew product map is topologically exact on the skew product Julia set, and satisfies the density of repelling periodic points.
 Moreover, we give a criterion for expandingness in terms of hyperbolicity. 
\end{abstract}
\maketitle

\section{Introduction and statement of results}
Let Rat be the set of all non-constant rational maps on the Riemann sphere $\widehat{\C}$. 
A subsemigroup of $\Rat$ with semigroup operation being the functional composition is called a rational semigroup. 
The study of rational semigroups was initiated independently in  \cite{96gongren}, \cite{MR1397693},\cite{MR1625944} for their own purposes.
This theory has seen significant development in relation to independent and identically-distributed (i.i.d.) random holomorphic dynamics \cite{MR2747724, MR3084426, MR4268827}. 
Here, by i.i.d. we mean that the holomorphic maps are chosen independently and with the same distribution. We refer to \cite{MR2747724} and \cite{MR3084426} for precise definitions.

Regarding further development along this line of study, it is noticeable that the authors of \cite{MR4002398} and \cite{MR4407234} developed this random iteration theory by introducing a Markov random dynamical system of rational maps. 
These works revealed new phenomena which cannot be observed in i.i.d. random dynamical systems; see, for example,  \cite[Proposition 4.23]{MR4002398}. 
We aim to establish a fractal theory for Markov random dynamical systems of rational maps which will lead us to a new horizon of random complex dynamics.
 As a first step, we establish Bowen's formula for the Hausdorff dimension of the Julia set of a rational graph-directed Markov system (rational GDMS for short). 
We refer to Section \ref{WAaUimgpMB} for an introduction to rational GDMS. 

To explain and motivate our study of Bowen's formula better, let us first briefly introduce the thermodynamic formalism of dynamical systems. 
The origin of this theory dates back to the seminal paper \cite{MR0399421} by Y. Sina\u{\i}. Later, D. Ruelle and R. Bowen laid the foundation of this theory as cofounders; see \cite{MR2129258}, \cite{MR2423393} for comprehensive accounts of the prototypical results. 
Of particular concern is Bowen's celebrated result which determines the Hausdorff dimension of a quasi-circle as the unique zero of a central notion in the thermodynamic formalism called pressure function; see \cite{MR0556580}. 
This achievement is widely known as Bowen's formula.
In \cite{MR0684247}, Ruelle further developed this result in the context of hyperbolic Julia sets, and many other analogous statements have been justified in various settings to this date.
In particular, we note that under the open set condition, Bowen's formula for a finitely generated, expanding rational semigroup was justified in \cite{MR2153926}.

\subsection{Rational graph directed Markov systems}\label{WAaUimgpMB}

Let us begin with a formal introduction to a rational GDMS. 
All the definitions in this subsection are given in \cite{MR4002398} and  \cite{MR648108}.
  \begin{dfn} 
Let $(V, E)$ be a \textit{directed graph} with finite \textit{vertex set} $V$ and finite \textit{edge set} $E$.
Let $\Gamma_e \subset \Rat$ be a non-empty, finite subset of $\Rat$ indexed by a directed edge $e \in E$.
Then the triple $S :=(V, E,(\Gamma_e)_{e \in E})$ is called a \textit{rational GDMS}. 
The symbol $i(e)$ (resp. $t(e))$ denotes the \textit{initial} (resp. \textit{terminal}) \textit{vertex} of each directed edge $e \in E$.
\end{dfn}
   We shall adopt the convention that $\N:=\{n \in \mathbb{Z} :  n>0\}$ and  $\N_{0}:=\{n \in \mathbb{Z} :  n \geq 0\}$.
For each point of the complex plane $\C$, let $|z|$ denote the modulus of $z$.

 \begin{dfn}
Given a rational GDMS $S := (V, E,(\Gamma_e)_{e \in E})$, we define  the subshift of finite type $(X(S), \sigma)$ given by 
$$
X(S) := \left\{\xi := (\xi_{n})_{n \in \N}:= (g_{n}, e_{n})_{n \in \N} \in(\Rat \times E)^{\N} \mid \forall n \in \N, g_n \in \Gamma_{e_n}, t(e_n)=i(e_{n+1})\right\},
$$ 
and the left shift $\sigma: X(S) \rightarrow X(S)$.    
We say that $S$ is \textit{irreducible} if $(X(S),\sigma)$ is irreducible, and  $S$ is \textit{irreducible and aperiodic} if $(X(S),\sigma)$ is irreducible and aperiodic; see e.g. \cite{MR648108} for these properties of subshifts. 
    
For each  $N \in \N$ we denote by $X^{N}(S)$ the set of all subwords of $X(S)$ of length $N$.
For each element $\xi = (g_n, e_n)_{1\le n \le N} \in X^{N}(S)$, denoting $g_\xi := g_N\circ \dots \circ g_1$, we define the sets
\begin{align*}
&H(S) := \bigcup_{N \in \N}\left\{g_{\xi} \mid \xi \in X^{N}(S) \right\}, \quad   H_{i}(S) := \left\{g_{\xi} \mid \xi \in X^{N}(S), i=i(e_{1})\right\},\\
&H^{j}(S) := \bigcup_{N \in \N}\left\{g_{\xi}  \mid \xi \in X^{N}(S), j=t(e_{N})\right\} \text{ and } H^{j}_{i}(S) := H_{i}(S) \cap H^{j}(S).
\end{align*}
The set $F(S)$ defined as $\{z \in  \widehat{\C} \mid H(S) \text { is equicontinuous on some neighborhood of } z\}$ is called the \textit{Fatou set of $S$}.
Its complement $J(S):= \widehat{\C}\setminus F(S)$ is called the \textit{Julia set of $S$}. 

Likewise, we define the following sets at each vertex.
The set $F_i(S)$ defined as $\{z \in  \widehat{\C} \mid H_i(S) \text { is equicontinuous on some neighborhood of } z\}$ is called the \textit{Fatou set of $S$ at the vertex $i$}.
Its complement $J_i(S):=\widehat{\C} \setminus F_i(S)$ is called the \textit{Julia set of $S$ at the vertex $i$}.
\end{dfn}

The cardinality of a set $\mathcal{A}$ is denoted by $\# \mathcal{A} \in \N_{0} \cup\{\infty\}$.
 \begin{dfn}
A rational GDMS $S$ is said to be \textit{non-elementary} if $\# J_{i}(S) \geq 3$ for all $i \in V$.
 \end{dfn}
\begin{dfn} Let $S := (V, E,(\Gamma_e)_{e \in E})$ be a rational GDMS.
\begin{enumerate}[label=(\roman*)]
\item We define the \textit{skew product map} $\tilde{f}: X(S) \times  \widehat{\C} \to X(S) \times  \widehat{\C}$ associated with $S$ by 
$$
\tilde{f}(\xi, z) :=\left(\sigma(\xi), g_1(z)\right)
$$
for each $\xi  := \left(g_n, e_n\right)_{n \in \N} \in X(S)$ and $z \in  \widehat{\C}$. 

\item For each $\xi := (g_n, e_n)_{n\in \N} \in X(S)$, we define the \textit{fiber Fatou set $F_{\xi}$ of $\xi$} by $$
\{z \in  \widehat{\C} \mid  (g_N\circ \dots \circ g_1)_{N \in \N} \text { is equicontinuous on some neighborhood of } z\}.
$$
Its complement $J_{\xi}:=\widehat{\C} \setminus F_{\xi}$ is called  the \textit{fiber Julia set of $\xi$}. 
We also define $F^{\xi}:=\{\xi\} \times F_{\xi} \subset X(S) \times  \widehat{\C}$, and $J^{\xi}:=\{\xi\} \times J_{\xi} \subset X(S) \times  \widehat{\C}$.
We call the set  
$$
J(\tilde{f}):=\overline{\bigcup_{\xi \in X(S)} J^{\xi}} \subset X(S) \times  \widehat{\C}
$$ the \textit{skew product Julia set of $\tilde{f}$}.
\end{enumerate}
\end{dfn}

 Let  $d_{\widehat{\C}}$ be the spherical distance on $\widehat{\C}$.
Let $d_{X}$ be the shift distance on $X(S)$ and define the distance $\tilde{d}$ on the set  $X(S) \times \widehat{\C}$ given by $\tilde{d}((\xi, x),(\tau, y)):= d_{X}(\xi, \tau)+d_{\widehat{\C}}(x, y)$ for each $(\xi, x),(\tau, y) \in X(S) \times \widehat{\C}$.
We shall endow the set $J(\tilde{f})$ with the subspace topology of the metric topology on $X(S) \times \widehat{\C}$.

\subsection{Main results}
In the setting above, we introduce further definitions to state the main results.
Let $S$ be a rational GDMS. 
Given an element $\xi \in X(S)$, we also write $(g_{\xi,n}, e_{\xi,n})_{n \in \N} := \xi$.
For each $\tilde{z} := (\xi, z) \in X(S) \times \widehat{\C}$ and $n \in \N$, we set 
$$
(\tilde{f}^n)^{\prime}(\tilde{z}) : =\left(g_{\xi,n} \circ \cdots \circ g_{\xi,1}\right)^{\prime}(z).
$$

The first result provides the characterisation of the skew product Julia set $J(\tilde{f})$ from the perspective of the dynamics of $\tilde{f}$ under a mild assumption.
 Recall that a point $\tilde{z} \in \widehat X(S) \times \C$ is called \textit{a repelling periodic point of $\tilde{f}$ } if there exists $p \in \N$ such that $\tilde{f}^{p}(\tilde{z}) = \tilde{z}$ and $|\tilde{f}^{p}(\tilde{z})^{\prime}| >1$.

\begin{mra}[see Proposition \ref{density_repelling_per_points}] 
Let $S$ be a non-elementary, irreducible rational GDMS and $\tilde{f}$ be the associated skew product map.
Then, $J(\tilde{f})$ is equal to the closure of 
the set of repelling periodic points of $\tilde{f}$.
\end{mra}
The next result concerns the topological behaviour of the skew product map $\tilde{f}$ with respect to $J(\tilde{f})$: the topological exactness, often casually described as locally eventually onto. 
To be more precise, the skew product map $\tilde{f}: X(S)\times \widehat{\C}\to X(S)\times \widehat{\C}$ is said to be \textit{topologically exact on $J(\tilde{f})$} if for each open subset $U$ of $X(S)\times \widehat{\C}$ with $U\cap J(\tilde{f})\neq \emptyset$, there exists an element $N\in \N$ such that for each $n\in \N$ with $n\geq N$, we have $\tilde{f}^{n}(U)\supset J(\tilde{f}).$

To obtain this result, we restrict the class of a skew product map $\tilde{f}$ by using the notion of expandingness. 
This notion is inspired by a class called the expanding rational semigroup introduced in \cite{MR2153926}.
We state the definition. A rational GDMS $S$ is said to be \textit{expanding} if $J(S) \neq \emptyset$ and the map $\tilde{f}$ is  \textit{expanding along fibers}, i.e.,  there exist $C > 0$ and $\lambda>1$ such that for each $n \in \N$
$$
\inf _{\tilde{z} \in J(\tilde{f})}\|(\tilde{f}^n)^{\prime}(\tilde{z})\| \geq C \lambda^n,
$$
where we use $\|\cdot\|$ to denote the norm of the derivative with respect to the metric $d_{\widehat{\C}}$.

\begin{mrb}[see Corollary \ref{ghraCYWJEm}]
Let $S$ be a non-elementary irreducible and aperiodic rational GDMS. 
Suppose that the associated skew product map $\tilde{f}$ is expanding along fibers.
Then, $\tilde{f}$ is topologically exact on $J(\tilde{f})$. 
\end{mrb}
Motivated by \cite{sumi1998hausdorff},  we introduce the definitions below for a rational GDMS. Let $S := \left(V, E,\left(\Gamma_e\right)_{e \in E}\right)$ be a rational GDMS.  For each vertex $i \in V$, the set defined by
$$
P_{i}(S) := \overline{\bigcup_{h \in H^{i}(S)}\{\text { critical values of } h\}} \subset \widehat{\C}
$$
is called the \textit{post-critical set at the vertex $i$}.
A rational GDMS $S$ is said to be \textit{hyperbolic} if $P_{i}(S) \subset F_{i}(S)$ for each $i \in V$.

\begin{mrc}[see Theorem \ref{A7J66dnGYg}]\label{jSk7Eo33Nc}
Let $S$ be a hyperbolic, non-elementary, irreducible rational GDMS.
 For each $i \in V$ and $h \in H_{i}^{i}(S)$, assume that $h$ is loxodromic when its degree is one.
 Then, the skew product map associated with $S$ is expanding along fibers.
\end{mrc}

To state the last result, we briefly introduce some relevant definitions of thermodynamic formalism; see subsection \ref{qdXaxWWPnV} for precise definitions.
Assume that a rational GDMS $S$ is expanding.
The geometric potential $\tilde\varphi \colon J(\tilde{f}) \to \R$ is given by $\tilde\varphi(\tilde{z}):= -\log \|g_{\xi, 1}^{\prime}(z)\|$ for each $\tilde{z} := (\xi, z) \in J(\tilde{f})$.
Let $P(\tilde{f}, \tilde{\varphi})$ denote the topological pressure of the pair $(\tilde{f}, \tilde\varphi)$.  
Then, there is a unique zero $\delta$ of the geometric pressure function $\mathcal{P}\colon \R \to \R$,  $t \mapsto P(\tilde{f}, t\tilde\varphi)$.
We refer to subsection \ref{qdXaxWWPnV} for further details on topological pressure.
Let $\dim_{H}(A)$ denote the Hausdorff dimension of a set $A \subset \widehat{\C}$ with respect to $d_{\widehat{\C}}$.

\begin{mrd}[see Theorem \ref{cI0oM7KYMj}]
Let $S$ be a non-elementary, expanding, irreducible and aperiodic rational GDMS.
Then, the $\delta$-dimensional Hausdorff measure of $J(S)$ is finite and its upper box-counting dimension never exceeds $\delta$.
\end{mrd}

\begin{mre}[see Corollary \ref{pYqFyMRHfD}]
Let $S$ be a non-elementary, expanding, irreducible and aperiodic  rational GDMS. 
Suppose that $S$ satisfies the backward separating condition.
Then, we have $\dim_{H}(J(S)) = \delta$ and the $\delta$-dimensional Hausdorff measure of $J(S)$ is positive and finite.
\end{mre}
The definition of the backward-separation condition is given in Definition \ref{wVNh1F6ao0}. 
In section \ref{8dEsBgjjIq}, we shall estimate the Hausdorff dimension of the Julia set for a rational GDMS by using the last result.

\section{Proof of main results}
For each $z \in \widehat{\C}$ and $r > 0$, let $D(z,r) := \{\zeta \in \widehat{\C} : d_{\widehat{\C}}(z, \zeta) < r\}$. 
 Let $\dist_{\widehat{\C}}(A, B):= \inf\{d_{\widehat{\C}}(z, w): z \in A, w \in B \}$ for each nonempty subsets $A,B \subset \widehat{\C}$. 
Let $\diam_{\widehat\C}(A)$ denote the diameter of the set $A$ with respect to $d_{\widehat{\C}}$.
For each element $g$ of $\Rat$, we write $\deg(g) \in \N$ to mean its degree.
In the present paper, we further use the following notation for a rational GDMS $S$.
\begin{notation*}
Let $N \in \N$ and $i \in V$.
\begin{itemize}
\item Let $X_{i}(S) :=  \left\{(g_{n}, e_{n})_{n \in \N} \in X(S) \mid i = i(e_{1})\right\}$.
\item For each $(\tau_{1}, \ldots, \tau_{N}) \in X^{N}(S)$, a \textit{cylinder set} of length $N$ is a set $[\tau_{1}, \ldots, \tau_{N}] := \{(\xi_{n})_{n \in \N}  \in X(S) \mid  1 \leq \forall n \leq N, \xi_{n} = \tau_{n}\}$.
\item Given $\xi \in X^{N}(S)$ and $\xi^{\prime} \in  X(S)$, we write $\xi\xi^{\prime} := (\xi, \xi^{\prime})$ if $(\xi, \xi^{\prime}) \in  X(S)$.
\item Denote the first and second projections as $\pi_{1} \colon X(S) \times \widehat{\C} \to X(S)$ and $\pi_{2} \colon X(S)\times \widehat{\C} \to  \widehat{\C}$ respectively.
\end{itemize}
\end{notation*}

Next, we list several important properties of a rational GDMS.
\begin{prp} \label{UHyK6FOUuf}
Let $S$ be a rational GDMS and $\tilde{f}$ be the skew product map associated with $S$. 
Then, we have the following.
\begin{enumerate}[label=(\roman*)]
\item\label{sc15xYijnn} $\tilde{f}^{-1}(J(\tilde{f})) = J(\tilde{f})$ and $\tilde{f}(J(\tilde{f})) = J(\tilde{f})$.
\item \label{XfUMBmTKD2} If the system $S$ is irreducible, then $J_i(S)=J\left(H_i^i(S)\right)$ for each $i \in V$.
\item The map $\tilde{f}$ is continuous on $X(S) \times \widehat\C$ and finite-to-one.
\item \label{homeo of f tilde} A point $(\xi, z) \in X(S) \times \widehat{\C}$  satisfies $g_{1}^{\prime}(z) \neq 0$  if and only if $\tilde{f}$ is a homeomorphism in any small neighborhood of $(\xi, z)$.
\item \label{projection}
$J(S) = \pi_{2}(\tilde J(\tilde{f}))$.
\end{enumerate}
\end{prp}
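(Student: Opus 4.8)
I would split the five items into the two ``topological'' ones, (iii) and (iv), which are direct, and the three ``dynamical'' ones (i), (ii), (v), all of which I would reduce to two facts about the non-autonomous iteration along an element of $X(S)$. For (iii) and (iv): since the alphabet $\{(g,e)\colon e\in E,\ g\in\Gamma_e\}$ is finite, $X(S)=\bigsqcup_{\tau_1\in X^1(S)}[\tau_1]$ is a partition into finitely many clopen cylinders, and on each $[\tau_1]\times\widehat{\C}$ the map $\tilde f$ equals $(\xi,z)\mapsto(\sigma(\xi),g_{\tau_1}(z))$ with $g_{\tau_1}\in\Rat$ fixed. Hence $\tilde f$ is continuous (being continuous on each piece of a finite clopen cover); finite-to-one (for $(\tau,w)$ there are finitely many $\xi_1$ with $\xi_1\tau\in X(S)$, each contributing at most $\deg(g_{\xi_1})$ preimages $z$); open (each $\sigma$ restricted to a cylinder $[\tau_1]$ is a homeomorphism onto a clopen set, and each $g_{\tau_1}$ is open); and closed (since $X(S)\times\widehat{\C}$ is compact). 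For (iv): if $g_{\xi,1}$ is unramified at $z$, it is biholomorphic on some $D(z,r)$, and $\sigma$ is injective on every cylinder, so $\tilde f$ is injective, hence --- being open --- a homeomorphism onto an open set, on each basic neighbourhood $[\xi_1,\dots,\xi_m]\times D(z,r')$ of $(\xi,z)$; conversely, if $z$ is a critical point of $g_{\xi,1}$, then $g_{\xi,1}$ is at least two-to-one on every punctured disc about $z$, so $\tilde f$ is injective on no neighbourhood of $(\xi,z)$. I record here that $\tilde f$ is open and closed; this is what turns set-level invariance into invariance of closed sets.

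The first input is the standard relation $g_{\xi,1}(J_\xi)=J_{\sigma(\xi)}$ and $g_{\xi,1}^{-1}(J_{\sigma(\xi)})=J_\xi$ for every $\xi\in X(S)$, equivalently $z\in F_\xi\iff g_{\xi,1}(z)\in F_{\sigma(\xi)}$: one implication is that normality of the tail family near $g_{\xi,1}(z)$ is preserved after precomposing by the fixed holomorphic map $g_{\xi,1}$, the other that $g_{\xi,1}$ is an open branched covering, so that normality near $z$ pushes forward to $g_{\xi,1}(U)$, the isolated critical value being treated by a removable-singularity argument for normal families. The second input is: for $h\in\Rat$ and any family $\mathcal G\subset\Rat$, if $\{h\circ g\colon g\in\mathcal G\}$ is equicontinuous on a neighbourhood of $z$, then $\mathcal G$ is equicontinuous on a (possibly smaller) neighbourhood of $z$. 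For this I would use Marty's criterion: if $g_k^{\#}(w_k)\to\infty$ with $g_k\in\mathcal G$ and $w_k\to z$, then boundedness of $(h\circ g_k)^{\#}=(h^{\#}\circ g_k)\,g_k^{\#}$ forces $g_k(w_k)$ toward the finite critical set of $h$, and near a critical point $c$ of local degree $\nu$ one writes $g_k-c$ as a local $\nu$-th root of $(h\circ g_k-h(c))/u$ and extracts a locally uniform limit, contradicting $g_k^{\#}(w_k)\to\infty$.

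With these in hand: (i) follows because the first input gives $\tilde f(J^\xi)=J^{\sigma(\xi)}$ and $\tilde f^{-1}(\{\tau\}\times J_\tau)=\bigsqcup_{\xi_1\tau\in X(S)}J^{\xi_1\tau}$, so $\bigcup_{\xi\in X(S)}J^\xi$ equals its own full preimage and --- using that $\sigma$ maps $X(S)$ onto $X(S)$ --- its own image under $\tilde f$; since $\tilde f$ is continuous, open and closed, $\tilde f^{-1}(\overline A)=\overline{\tilde f^{-1}(A)}$ and $\tilde f(\overline A)=\overline{\tilde f(A)}$, whence $\tilde f^{-1}(J(\tilde f))=J(\tilde f)=\tilde f(J(\tilde f))$. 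For (ii), $J(H_i^i(S))\subseteq J_i(S)$ is immediate from $H_i^i(S)\subseteq H_i(S)$; conversely, given $z\in F(H_i^i(S))$, I would use irreducibility to pick for each $j\in V$ a path $\rho_j$ from $j$ to $i$, observe that $g_{\rho_j}\circ H_i^j(S)\subseteq H_i^i(S)$ is equicontinuous near $z$, apply the second input with $h=g_{\rho_j}$ to get $H_i^j(S)$ equicontinuous near $z$, and intersect over the finitely many $j$ to conclude that $H_i(S)=\bigcup_j H_i^j(S)$ is equicontinuous near $z$, i.e.\ $z\in F_i(S)$. For (v), continuity of $\pi_2$ and compactness of $X(S)\times\widehat{\C}$ give $\pi_2(J(\tilde f))=\overline{\bigcup_{\xi\in X(S)}J_\xi}=\bigcup_{i\in V}\overline{\bigcup_{\xi\in X_i(S)}J_\xi}$, while $F(S)=\bigcap_{i\in V}F_i(S)$ gives $J(S)=\bigcup_{i\in V}J_i(S)$, so (v) reduces to the vertexwise identity $\overline{\bigcup_{\xi\in X_i(S)}J_\xi}=J_i(S)$; here ``$\subseteq$'' holds because each $J_\xi\subseteq J_i(S)$, and ``$\supseteq$'' is obtained by extracting, from a sequence of finite paths from $i$ witnessing a failure of equicontinuity of $H_i(S)$ at a point $z$ (whose lengths necessarily tend to $\infty$), a limit $\xi^\ast\in X_i(S)$ with $z\in J_{\xi^\ast}$, via a compactness argument over the finite alphabet.

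I expect the main obstacle to be the second input, precisely the behaviour of the family near the critical points of the fixed rational map $h$, where Marty's criterion alone does not deliver a contradiction and one must invert $h$ through a local root to produce the limiting map. The removable-singularity step in the first input, and the compactness extraction of $\xi^\ast$ in the proof of (v) --- where one must control the dependence on long initial segments of the witnessing paths --- are the other places that need care; everything else is bookkeeping with the clopen cylinder structure of $X(S)$.
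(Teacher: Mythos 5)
Your treatment of (i)--(iv) is sound and essentially self-contained, which is more than the paper itself offers (its proof consists of a citation of \cite{MR4002398} for (i) and a pointer to Section 3 of \cite{MR1767945} for the rest); your ``second input'' is exactly the Claim the authors prove inside Proposition \ref{density_repelling_per_points}, where they use the cleaner observation that connected components of $h^{-1}(B)$ of small balls $B$ have uniformly small diameter, rather than Marty's criterion. The genuine gap is in the inclusion $J_i(S)\subseteq\overline{\bigcup_{\xi\in X_i(S)}J_\xi}$ in your proof of (v). You propose to take finite words $w^{(k)}$ issuing from $i$ that witness the failure of equicontinuity of $H_i(S)$ at $z$, extract a coordinatewise limit $\xi^\ast$ over the finite alphabet, and conclude $z\in J_{\xi^\ast}$. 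This does not follow: for each fixed $m$ the words $w^{(k)}$ agree with $\xi^\ast$ only on their first $m$ letters (for $k$ large), and the failure of equicontinuity of $g_{w^{(k)}}$ may be caused entirely by the tails $\sigma^m w^{(k)}$, which bear no relation to $\xi^\ast$. Concretely, in the full shift over $\{f_1,f_2\}$ with $f_1$ a rotation and $f_2(z)=z^2$, the words $w^{(k)}=1^k2^k$ witness non-equicontinuity at every $z$ with $|z|=1$, yet their coordinatewise limit is $1^\infty$ and $J_{1^\infty}=\emptyset$; the point $z$ lies in $J_{2^\infty}$, a word you do not see from the extraction.

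The inclusion is nevertheless true, but by a different mechanism, which is what the cited references actually use. One route: by your part (i), the compact set $A_i:=\pi_2\bigl(J(\tilde f)\cap\pi_1^{-1}(X_i(S))\bigr)=\overline{\bigcup_{\xi\in X_i(S)}J_\xi}$ satisfies $h^{-1}(A_i)\subseteq A_i$ for every $h\in H_i^i(S)$ (concatenate the loop $h$ in front of a witnessing word for $h(z)$ and use $\tilde f^{-n}(J(\tilde f))=J(\tilde f)$); if $\#A_i\geq 3$, Montel's theorem applied to the family $H_i^i(S)$ on $\widehat{\C}\setminus A_i$, which omits $A_i$, gives $\widehat{\C}\setminus A_i\subseteq F(H_i^i(S))$, and then your part (ii) yields $J_i(S)\subseteq A_i$. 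An alternative route, under the non-elementary hypothesis, is that $J_i(S)=J(H_i^i(S))$ is the closure of the repelling fixed points of elements $g_w\in H_i^i(S)$, and each such fixed point lies in $J_{\overline{w}}$ for the periodic word $\overline{w}=www\cdots\in X_i(S)$. Either way one needs $\#A_i\geq 3$ or non-elementarity --- hypotheses absent from the bare statement of (v) but available wherever the paper invokes it --- and the coordinatewise compactness extraction cannot be repaired as stated.
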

\begin{proof}
See Lemma 2.31 in  \cite{MR4002398} for the proof of \ref{sc15xYijnn}. 
The other assertions follow from definitions and are analogous to some statements given in section 3 of \cite{MR1767945}.
\end{proof}

\subsection{The density of repelling periodic points and the topological exactness of a skew product map}\label{0jGAbQXBRW}
The objectives of this subsection are to prove the density in $J(\tilde{f})$ of the repelling periodic points of a skew product map $\tilde{f}$ and, as a consequence, the topological exactness of $\tilde{f}$ under some standard assumptions.

\begin{prp}\label{density_repelling_per_points}
Let $S$ be a non-elementary, irreducible rational GDMS and $\tilde{f}: X(S)\times \widehat{\C}\to X(S)\times \widehat{\C}$ the associated skew product map.
Then, $J(\tilde{f})$ is equal to the closure of 
the set of repelling periodic points of $\tilde{f}. $ 
\end{prp}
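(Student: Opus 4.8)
The plan is to prove the two inclusions of $J(\tilde f)=\overline{\{\text{repelling periodic points of }\tilde f\}}$ separately. The inclusion ``$\supseteq$'' is elementary. If $\tilde z=(\xi,z)$ is a repelling periodic point of period $p$, then $\sigma^{p}\xi=\xi$, so $h:=g_{\xi,p}\circ\cdots\circ g_{\xi,1}$ lies in $H^{i}_{i}(S)$, where $i:=i(e_{\xi,1})$, and $z$ is a fixed point of $h$ with $|h'(z)|>1$. If $\deg(h)\ge2$ then $z\in J(h)$; if $\deg(h)=1$, the inequality $|h'(z)|>1$ forces $h$ to be loxodromic with repelling fixed point $z$. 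In either case $(h^{n})_{n\in\N}$, and hence the family $(g_{\xi,n}\circ\cdots\circ g_{\xi,1})_{n\in\N}$ of which it is a subfamily, fails to be equicontinuous near $z$; thus $z\in J_{\xi}$ and $\tilde z\in J^{\xi}\subseteq J(\tilde f)$. Since $J(\tilde f)$ is closed, this yields one inclusion.

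For the converse, observe that $J(\tilde f)=\overline{\bigcup_{\xi\in X(S)}J^{\xi}}$ and that $J_{\xi}\subseteq J_{i(e_{\xi,1})}(S)$, because $(g_{\xi,n}\circ\cdots\circ g_{\xi,1})_{n}$ is a subfamily of $H_{i(e_{\xi,1})}(S)$. Hence it suffices to show: for every $i\in V$, every $\xi_{0}\in X_{i}(S)$, every $z_{0}\in J_{i}(S)$ and every $\varepsilon>0$, there is a repelling periodic point of $\tilde f$ within $\tilde d$-distance $\varepsilon$ of $(\xi_{0},z_{0})$. Fix these data, choose $M\in\N$ so large that agreement of two elements of $X(S)$ on the first $M$ coordinates forces $d_{X}$-distance $<\varepsilon/2$, let $P:=(\xi_{0,1},\dots,\xi_{0,M})\in X^{M}(S)$ with terminal vertex $j:=t(e_{\xi_{0},M})$, use irreducibility to fix an admissible word $\zeta$ from $j$ to $i$, and put $K:=g_{\zeta}\circ g_{P}\in H^{i}_{i}(S)$. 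The key assertion is
$$
\overline{\bigcup\bigl\{\,J(g_{w})\ :\ w\text{ an admissible cycle through }i\text{ beginning with }P\,\bigr\}}=J_{i}(S).
$$
By Proposition~\ref{UHyK6FOUuf}\ref{XfUMBmTKD2} we have $J_{i}(S)=J(H^{i}_{i}(S))$ with $\#J_{i}(S)\ge3$, so by the theory of rational semigroups (Hinkkanen--Martin) $J_{i}(S)=\overline{\bigcup_{v}J(g_{v})}$, where $v$ runs over admissible cycles through $i$ with $g_{v}$ of degree $\ge2$ or loxodromic. For such $v$ and $N\in\N$ consider the cycle $w_{N}:=P\zeta v^{N}$, which begins with $P$ and satisfies $g_{w_{N}}=g_{v}^{N}\circ K$. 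For $N$ large the blow-up property of the single rational map $g_{v}$ at points of $J(g_{v})$, together with the fact that a nonconstant rational map sends a set omitting at most two points to such a set, shows that one iterate of $g_{v}^{N}\circ K$ already has cofinite image near any point of $K^{-1}(J(g_{v}))$; hence the iterates of $g_{v}^{N}\circ K$ form a non-normal family there, i.e.\ $K^{-1}(J(g_{v}))$ is contained in (for $\deg g_v\ge 2$) or approximated by (for $g_v$ loxodromic) $J(g_{w_{N}})$ as $N\to\infty$. Therefore $\bigcup_{v,N}J(g_{w_{N}})$ accumulates on $\bigcup_{v}K^{-1}(J(g_{v}))=K^{-1}\!\bigl(\bigcup_{v}J(g_{v})\bigr)$, whose closure is $K^{-1}(J_{i}(S))=J_{i}(S)$; here one uses that $K$ is an open map, that $K(J_{i}(S))\subseteq J_{i}(S)$ and $K^{-1}(J_{i}(S))=J_{i}(S)$, and that $\bigcup_{v}J(g_{v})$ is dense in $J_{i}(S)$. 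This proves the displayed identity.

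Granting the identity, we finish as follows. Since $z_{0}\in J_{i}(S)=\overline{\bigcup_{v}K^{-1}(J(g_{v}))}$, pick $v$ as above and $y_{0}\in K^{-1}(J(g_{v}))$ with $d_{\widehat\C}(y_{0},z_{0})<\varepsilon/4$, then take $N$ large so that $y_{0}$ is within $\varepsilon/8$ of $J(g_{w_{N}})$. By the classical density of repelling periodic points in the Julia set of a single rational map (when $\deg g_{w_{N}}\ge2$), or directly in the loxodromic case, there is a repelling periodic point $\zeta_{0}$ of $g_{w_{N}}$ with $d_{\widehat\C}(\zeta_{0},z_{0})<\varepsilon/2$. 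The periodic sequence $\overline{w_{N}}\in X(S)$ begins with $P=\xi_{0}|_{M}$, so $d_{X}(\overline{w_{N}},\xi_{0})<\varepsilon/2$, and $(\overline{w_{N}},\zeta_{0})$ is a repelling periodic point of $\tilde f$ (if $\zeta_{0}$ has $g_{w_{N}}$-period $q$, then $\tilde f^{q|w_{N}|}(\overline{w_{N}},\zeta_{0})=(\overline{w_{N}},g_{w_{N}}^{q}(\zeta_{0}))=(\overline{w_{N}},\zeta_{0})$ with derivative of modulus $>1$). Hence $\tilde d\bigl((\overline{w_{N}},\zeta_{0}),(\xi_{0},z_{0})\bigr)<\varepsilon$, which completes the proof.

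The main obstacle is the displayed density identity, i.e.\ controlling the Julia sets of the return maps $g_{w}$ over cycles $w$ constrained to begin with the prescribed prefix $P$ — this is precisely where the requirement ``the periodic base must agree with $\xi_{0}$ on its first $M$ coordinates'' is met. The mechanism (prepend $P$ and a routing word, then append a high power $v^{N}$ of a cycle whose map $g_{v}$ is non-elementary, and exploit that $g_{v}^{N}$ blows up near $J(g_{v})$ while $J_{i}(S)$ is totally invariant under the fixed prefix map $K$) is robust but requires the usual care with the at most two exceptional points and the finitely many critical values entering the blow-up and the pull-back by $K$, and with treating the degree-one (loxodromic Möbius) case uniformly alongside the degree $\ge2$ case. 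The remaining inputs — $J_{i}(S)=J(H^{i}_{i}(S))$ (Proposition~\ref{UHyK6FOUuf}), the Hinkkanen--Martin description of $J(H^{i}_{i}(S))$, the blow-up property, and the density of repelling periodic points for a single rational map — are all standard.
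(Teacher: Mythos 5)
Your ``$\supseteq$'' inclusion is fine, but the hard inclusion has a genuine gap, located exactly at your displayed ``key assertion''. Its proof rests on the identity $K^{-1}(J_i(S))=J_i(S)$, equivalently on the forward invariance $K(J_i(S))\subset J_i(S)$ for $K\in H_i^i(S)$. Julia sets of rational semigroups are backward invariant but \emph{not} forward invariant in general, so only $K^{-1}(J_i(S))\subset J_i(S)$ holds, and the closure you compute is $K^{-1}(J_i(S))$, possibly a proper subset of $J_i(S)$. The assertion itself is false: take one vertex, one self-loop, $\Gamma=\{z^2,\,z^2/4\}$, so that $J_1(S)$ is the annulus $\{1\le|z|\le4\}$, and let $P=(z^2,\dots,z^2)$ of length $M$. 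Every cycle $w$ beginning with $P$ has $J(g_w)=g_P^{-1}(J(g_P\circ h))\subset\{1\le|z|\le4^{2^{-M}}\}$, so the left-hand side of your identity is a thin annulus near $|z|=1$, not $J_1(S)$. The same example kills your reduction step: approximating \emph{every} $(\xi_0,z_0)$ with $z_0\in J_i(S)$ by repelling periodic points would (via your own easy inclusion) force $X_i(S)\times J_i(S)\subset J(\tilde f)$, which fails here for $\xi_0=(z^2,z^2,\dots)$ and $|z_0|=3$. The symptom of the error is that your argument never uses the actual hypothesis $z_0\in J_{\xi_0}$.

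This is precisely the point the paper's proof is built around. It uses the non-equicontinuity of $(g_{\xi,n}\circ\cdots\circ g_{\xi,1})_n$ at $z_0\in J_\xi$ --- upgraded, by appending uniformly chosen return words $h_n\in H^i_{t(e_{\xi,n})}(S)$ from a finite set, to non-equicontinuity of $(h_n\circ g_{\xi,n}\circ\cdots\circ g_{\xi,1})_n$ --- and then Montel's theorem against three repelling fixed points $x_1,x_2,x_3\in J_i(S)$ to produce a \emph{specific} point $z_1\in V_0$ and a \emph{specific} cycle $\rho|_s$ extending the prescribed prefix with $g_{\rho|_s}(z_1)=x_j\in J_i(S)$. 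Only then does the pulled-back set enter: $z_1\in g_{\rho|_s}^{-1}(J_i(S))=J(H)$ for the sub-semigroup $H$ of return maps factoring through $g_{\rho|_s}$, and density of repelling fixed points in $J(H)$ finishes the argument. Your construction of $w_N=P\zeta v^N$ and the blow-up of $g_v^N$ is a reasonable way to manufacture repelling cycles once you have a point of $V_0$ known to land in $J_i(S)$ under a prefix-respecting return map; what is missing is the Montel-type step that produces such a point from $z_0\in J_{\xi_0}$.
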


\begin{proof}
Let $U$ be a nonempty open subset of $X(S)\times \widehat{\C}$ 
with $U\cap J(\tilde{f})\neq \emptyset.$ Then, there exists an element
 $(\xi, z_{0})\in U\cap J(\tilde{f}).$ We may assume 
 $z_{0}\in J_{\xi}.$ 
 Let $i:=i(e_{\xi, 1}). $ There exist $r\in \N $ and 
 an open subset $V_{0}$ of $\widehat{\C}$ such that 
 $$(\xi ,z_{0})\in [\xi _{1},\ldots, \xi _{r}]\times V_{0}\subset U.$$ 

  Since $S$ is irreducible and $V$ is finite, there exists a finite set $\mathscr{F} \subset H(S)$ such that for each $i,j\in V$ there exists $h\in H^i_j(S) \cap \mathscr{F}$.  
 For each $n \in \N$ we take  $h_{n}\in H_{t(e_{\xi,n})}^{i}(S)\cap \mathscr{F}$. Clearly, $\{ h_{n}\mid n\in \N\} $ is a finite set. 
 Since $z_{0}\in J_{\xi }$, the sequence 
 $\{ g_{\xi, n}\circ \cdots \circ g_{\xi ,1}\}_{n=1}^{\infty }$ is not 
 equicontinuous in any neighborhood of $z_{0}.$ 
 We now show the following claim.
\begin{claim*}\label{NSwtUL14So}
The sequence $\{ h_{n}\circ g_{\xi, n}\circ \cdots \circ g_{\xi ,1}\}_{n=1}^{\infty} $ is not equicontinuous in any neighborhood of $z_{0}$. 
\end{claim*}
 
 To prove this claim, 
 suppose $\{ h_{n}\circ g_{\xi, n}\circ \cdots \circ g_{\xi ,1}\}_{n=1}^{\infty }$ is  
 equicontinuous in a neighborhood of $z_{0}.$ Let 
 $\varepsilon >0$ be any small positive number. 
 Since $\{ h_{n}\mid n\in \N\}$ is finite, 
  there exists an $\varepsilon_{0}>0$ such that 
  for each $\varepsilon_{0}$-ball $B$ in $\widehat{\C}$ and for each 
  $n\in \N$, the diameter of any connected component of $h_{n}^{-1}(B)$ is less than $\varepsilon.$ For this $\varepsilon_{0}>0$, since 
  we are assuming that $\{ h_{n}\circ g_{\xi, n}\circ \cdots \circ g_{\xi ,1}\}_{n=1}^{\infty }$ is  
 equicontinuous in a neighborhood of $z_{0}$, there exists a 
 $\delta >0$ such that 
 for each $n\in \N$, the diameter of $h_{n}\circ g_{\xi, n}\circ \cdots \circ g_{\xi ,1}(D(z_{0}, \delta ))$ is less than $\varepsilon_{0}$. It follows that 
 the diameter of $g_{\xi, n}\circ \cdots \circ g_{\xi ,1}(D(z_{0}, \delta ))$ is 
 less than $\varepsilon.$ 
 However, this contradicts $z_{0}\in J_{\xi}.$ 
 Thus we have proved our claim. 
 
 Since $S$ is irreducible, $\# J_{i}(S)=\# J(H_{i}^{i}(S))\geq 3$ by (\ref{XfUMBmTKD2}) of Proposition \ref{UHyK6FOUuf}. 
 Hence, $\# J(H_{i}^{i}(S))= \infty$ and $J(H_{i}^{i}(S))$ is equal to the closure of the set of repelling fixed 
 points of elements of $H_{i}^{i}(S)$; see e.g. \cite{MR1397693}, \cite{MR2900562}.
 
  Thus there exist  mutually distinct three 
 elements $x_{1}, x_{2}, x_{3}\in J(H_{i}^{i})$ such that for each 
 $j=1,2,3,$ the 
point $x_{j}$ is a repelling fixed point of some $\alpha_{j}\in H_{i}^{i}(S)$.   
  By the claim above and Montel's theorem, it follows that 
there exist an element $n\in \N$ with $n\geq r$, an element 
$z_{1}\in V_{0}$, and an element $j\in \{ 1,2,3\}$ such that 
$$h_{n}\circ g_{\xi, n}\circ \cdots \circ g_{\xi ,1}(z_{1})=x_{j}.$$

Therefore there exist an element $\rho \in [\xi_{1}, \ldots, \xi _{r}]$ and 
an element $s\in \N$ with $s\geq r$ such that 
\begin{equation}\label{XRexk38dyn}
\ z_{1}\in J_{\rho },\ g_{\rho, s}\circ \cdots \circ g_{\rho ,1}\in H_{i}^{i}(S), 
\mbox{ and } g_{\rho, s}\circ \cdots \circ g_{\rho ,1}(z_{1})=x_{j}\in J_{i}(S)=J(H_{i}^{i}(S)).
\end{equation}
Let $H:=\{ h\in H_{i}^{i}(S)\mid h=\cdots \circ g_{\rho, s}\circ \cdots \circ g_{\rho ,1}\}$.
Note that this is a rational semigroup and 
\begin{equation}\label{tfIdUGgdiI}
\ J(H)=(g_{\rho , s}\circ \cdots \circ g_{\rho ,1})^{-1}(J(H_{i}^{i}(S)).
\end{equation}
By $(\ref{XRexk38dyn}),(\ref{tfIdUGgdiI})$, we obtain 
\begin{equation}\label{1gongIbvUM}
\ z_{1}\in (g_{\rho, s}\circ \cdots \circ g_{\rho ,1})^{-1}(J(H_{i}^{i}(S)))
=J(H).
\end{equation}
Moreover, since $\# J(H_{i}^{i}(S))\geq 3$, by $(\ref{tfIdUGgdiI})$ we have 
$\# (J(H))\geq 3.$ Hence,  
 $J(H)$ is equal to the closure of the set of all repelling fixed points 
of elements of $H$; see  \cite{MR1397693}. 
Combining this with $(\ref{1gongIbvUM})$ and the fact $z_{1}\in V_{0}$, we obtain that 
there exist an element $z_{2}\in V_{0}$ and an element $g\in H$ such that 
$z_{2}$ is a repelling fixed point of $g.$

  Hence, there exist an element $\beta \in [\rho _{1}, \ldots ,\rho _{s}]
  \subset [\xi _{1}, \ldots ,\xi _{r}]$ and an element $k\in \N$ with 
  $k\geq s$ such that $\sigma ^{k}(\beta )=\beta $ and $z_{2}$ is a 
  repelling fixed point of $g_{\beta, k}\circ \cdots g_{\beta ,1}.$ 
  Thus 
  $$(\beta ,z_{2})\in 
  ([\xi _{1},\ldots ,\xi _{r}]\times V_{0})\cap J(\tilde{f})\subset U\cap J(\tilde{f})$$ and 
  $(\beta, z_{2})$ is a repelling periodic point of $\tilde{f}.$ 
  Hence, we have proved our theorem. 
\end{proof}

\begin{dfn}\cite[Definition 4.5]{MR4002398}
A point $z$ is called \textit{an exceptional point} of $S$ at the vertex $i \in V$ if $\#\bigcup_{h \in H_i^i(S)} h^{-1}(\{z\})<3$, and let $\mathcal{E}_i(S)$ denote the set of all exceptional points $z$ of $S$ at $i \in V$.
\end{dfn}

\begin{lem}\label{ftilde_nk_u_naught}
Let $S:=(V, E,(\Gamma_e)_{e \in E})$ be a non-elementary, irreducible and aperiodic rational GDMS.
Suppose that $K$ is a compact subset of $X(S)\times \widehat{\C}$ such that $K\cap (X_{i}(S)\times \mathcal{E}_{i}(S))=\emptyset $ for each $i\in V$.
Furthermore, assume that for any open set $U \subset X(S) \times \widehat{\C}$ with $U \cap J(\tilde f ) \neq \emptyset$, there exists  $k \in \N$ and non-empty open set $U_{0} \subset U$ such that 
\begin{equation}\label{ftilde_k_U_naught_1}
U_0 \cap J(\tilde{f}) \neq \emptyset \quad \text{and} \quad \tilde{f}^k\left(U_0\right) \supset U_0.
\end{equation}
Then, there exists $N \in \N$ such that for all $n \geq N$, $\tilde{f}^{nk}\left(U_{0}\right) \supset K$.
\end{lem}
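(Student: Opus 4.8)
The plan is to prove the (equivalent) inclusion $K\subseteq W:=\bigcup_{j\ge 0}\tilde f^{jk}(U_{0})$ and then conclude by compactness. First observe that $\tilde f$ is an open map: on a product $C\times D$ of a cylinder $C$ and an open ball $D\subseteq\widehat{\C}$, the first symbol of $C$ is fixed, say $\tau_{1}$, so $\tilde f$ acts by $(\rho,y)\mapsto(\sigma\rho,g_{\tau_{1}}(y))$ with $g_{\tau_{1}}$ a fixed non-constant rational map, and thus $\tilde f(C\times D)=\sigma(C)\times g_{\tau_{1}}(D)$ is open. Since $\tilde f^{k}(U_{0})\supseteq U_{0}$, applying $\tilde f^{jk}$ gives $\tilde f^{(j+1)k}(U_{0})=\tilde f^{jk}\bigl(\tilde f^{k}(U_{0})\bigr)\supseteq\tilde f^{jk}(U_{0})$, so $\bigl(\tilde f^{jk}(U_{0})\bigr)_{j\ge 0}$ is an increasing sequence of open sets with union $W$. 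Hence, once $K\subseteq W$ is known, compactness of $K$ yields $N$ with $K\subseteq\tilde f^{Nk}(U_{0})$, and monotonicity gives $K\subseteq\tilde f^{nk}(U_{0})$ for all $n\ge N$, which is the assertion.

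To prove $K\subseteq W$, fix $(\xi_{0},z_{0})\in U_{0}$ with $z_{0}\in J_{\xi_{0}}\subseteq J_{i_{0}}(S)$, $i_{0}:=i(e_{\xi_{0},1})$; this is possible since $U_{0}$ is open and $U_{0}\cap J(\tilde f)=U_{0}\cap\overline{\bigcup_{\xi}J^{\xi}}\neq\emptyset$. Choose $l\in k\N$ large and an open ball $B\ni z_{0}$ with $[\xi_{0,1},\dots,\xi_{0,l}]\times B\subseteq U_{0}$ (longer cylinders remain inside $U_{0}$, so demanding $l\in k\N$ costs nothing), put $G_{l}:=g_{\xi_{0},l}\circ\cdots\circ g_{\xi_{0},1}$, $u:=t(e_{\xi_{0},l})$, and $A:=G_{l}(B)$. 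Since $(\xi_{0},z_{0})\in J(\tilde f)$ and $\tilde f^{l}(J(\tilde f))=J(\tilde f)$ by Proposition~\ref{UHyK6FOUuf}\ref{sc15xYijnn}, the point $\tilde f^{l}(\xi_{0},z_{0})=(\sigma^{l}\xi_{0},G_{l}(z_{0}))$ lies in $J(\tilde f)$ with $\sigma^{l}\xi_{0}\in X_{u}(S)$, whence $G_{l}(z_{0})\in J_{u}(S)$; so the open set $A$ meets $J_{u}(S)$. Now fix $\tilde w=(\tau,w)\in K$ and let $v:=i(e_{\tau,1})$; by hypothesis $\tilde w\notin X_{v}(S)\times\mathcal E_{v}(S)$, so $w\notin\mathcal E_{v}(S)$. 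The heart of the matter is the following claim, which I expect to be the main obstacle.

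\emph{Claim.} There exist $q\in k\N$ (indeed all sufficiently large $q\in k\N$) and an admissible word $\eta\in X^{q}(S)$ with $i(e_{\eta_{1}})=u$, $t(e_{\eta_{q}})=v$, and $w\in g_{\eta}(A)$.

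Granting the Claim, fix such $q$, pick $a\in A$ with $g_{\eta}(a)=w$ and then $y\in B$ with $G_{l}(y)=a$, and let $\rho\in X(S)$ be the concatenation of $(\xi_{0,1},\dots,\xi_{0,l})$, $\eta$ and $\tau$; this word is admissible because $t(e_{\xi_{0},l})=u=i(e_{\eta_{1}})$ and $t(e_{\eta_{q}})=v=i(e_{\tau,1})$. Then $\tilde u:=(\rho,y)\in[\xi_{0,1},\dots,\xi_{0,l}]\times B\subseteq U_{0}$, and with $n:=(l+q)/k\in\N$ we get $\tilde f^{nk}(\tilde u)=\tilde f^{l+q}(\rho,y)=\bigl(\tau,\,g_{\eta}(G_{l}(y))\bigr)=(\tau,w)=\tilde w$, so $\tilde w\in\tilde f^{nk}(U_{0})\subseteq W$; as $\tilde w\in K$ was arbitrary, $K\subseteq W$. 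It remains to prove the Claim. Irreducibility provides a word $g_{*}\in H_{u}^{v}(S)$; since $\#J_{v}(S)\ge 3$ and $w\notin\mathcal E_{v}(S)$, the backward orbit $\bigcup_{h\in H_{v}^{v}(S)}h^{-1}(w)$ is dense in $J_{v}(S)=J(H_{v}^{v}(S))$ by the usual Montel argument for non-elementary rational semigroups (cf.\ \cite{MR1397693,MR2900562} and Proposition~\ref{UHyK6FOUuf}\ref{XfUMBmTKD2}). As $A$ meets the perfect set $J_{u}(S)$, it contains a repelling fixed point $p_{0}$ of some $h_{0}\in H_{u}^{u}(S)$ (density of repelling cycles, Proposition~\ref{density_repelling_per_points} applied to $H_{u}^{u}(S)$); since $p_{0}\in J_{u}(S)$ while $\mathcal E_{u}(S)$ is finite and disjoint from $J_{u}(S)$, choose a ball $B'$ with $p_{0}\in B'$, $\overline{B'}\subseteq A$, $\overline{B'}\cap\mathcal E_{u}(S)=\emptyset$, and let $\psi_{0}$ be the inverse branch of $h_{0}$ fixing $p_{0}$, defined and uniformly contracting on $\overline{B'}$. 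Applying the density statement to the open set $g_{*}(B')$ — which meets $J_{v}(S)$ because $g_{*}(J_{u}(S))\subseteq J_{v}(S)$ and $p_{0}\in J_{u}(S)$ — produces $h\in H_{v}^{v}(S)$ and $c\in B'\setminus\mathcal E_{u}(S)$ with $h(g_{*}(c))=w$; concatenating the words of $g_{*}$ and $h$ gives an admissible word $\zeta$ from $u$ to $v$ with $g_{\zeta}(c)=w$. Prepending $t$ copies of the loop word $\lambda_{0}$ of $h_{0}$ to $\zeta$ replaces $c$ by $\psi_{0}^{t}(c)\in B'\subseteq A$ while still landing on $w$, which proves the Claim along the arithmetic progression of lengths $|\zeta|+t|\lambda_{0}|$, $t\ge 0$.

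The remaining, and genuinely delicate, point is to upgrade this arithmetic progression to \emph{all} sufficiently large lengths — in particular all sufficiently large lengths in $k\N$. This is exactly where aperiodicity of $(X(S),\sigma)$ enters and cannot be replaced by mere irreducibility (which sufficed for Proposition~\ref{density_repelling_per_points}): aperiodicity forces the greatest common divisor of the lengths of admissible loops at $u$ to equal $1$, and, combined with $c\notin\mathcal E_{u}(S)$ (so that the backward orbit of $c$ under admissible loops at $u$ is dense in $J_{u}(S)$ and hence meets $B'$), this lets one insert before $\zeta$ an admissible loop at $u$ of any prescribed residue class and then absorb the residual length into a power of $\lambda_{0}$, realising every sufficiently large word length. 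In effect the Claim is a length-uniform version of the density of backward orbits of a non-exceptional point — a length-uniform topological exactness for the graph-directed semigroup — and making this uniformity precise (via the methods of \cite{MR4002398}, or by isolating it as a separate lemma) is, I expect, the bulk of the work.
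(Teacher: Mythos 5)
Your overall reduction is sound: $\tilde{f}$ is an open map, $\tilde{f}^{k}(U_{0})\supset U_{0}$ makes the sets $\tilde{f}^{jk}(U_{0})$ an increasing open cover, and compactness of $K$ then converts ``$K\subseteq\bigcup_{j}\tilde{f}^{jk}(U_{0})$'' into the stated conclusion. The strategy of exhibiting, for each $(\tau,w)\in K$, an explicit preimage inside a cylinder-times-ball contained in $U_{0}$ is also a legitimate alternative to the paper's argument. However, the proof is incomplete, and the gap is exactly the one you flag yourself: your construction requires a connecting word $\eta$ from $u$ to $v$ with $w\in g_{\eta}(A)$ and $l+|\eta|\in k\N$ (even a single such $\eta$ per point would suffice, by monotonicity), but you only produce words whose lengths lie in the arithmetic progression $|\zeta|+t|\lambda_{0}|$, which need not meet $-l+k\N$. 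The repair you sketch is circular: inserting an admissible loop $\mu$ at $u$ ``of prescribed residue class'' changes the point whose $g_{\mu}$-preimage must land back in $B'$, so you need the backward-orbit density statement again, now with a length constraint --- which is precisely the statement you are trying to establish. The paper avoids this by a different device: since $U_{0}$ contains a repelling periodic point $(\xi,z_{0})$ of period $k$ (Proposition \ref{density_repelling_per_points}, and the way $U_{0}$ arises in the application), after $r=r'k$ steps the image $\tilde{f}^{r}(U_{0})$ contains a full rectangle $[\xi_{1}]\times V_{1}$ with $z_{0}\in V_{1}$; the divisibility constraint is then absorbed at the symbolic level by the topological mixing of $\sigma^{k}$ (this is where aperiodicity enters), and the fiber constraint by the hypothesis that $K$ avoids $X_{i}(S)\times\mathcal{E}_{i}(S)$. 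Without some such mechanism your Claim remains unproven and the lemma does not follow.

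A secondary but genuine error: you justify that $g_{*}(B')$ meets $J_{v}(S)$ by asserting $g_{*}(J_{u}(S))\subseteq J_{v}(S)$. The paper establishes only forward invariance of the Fatou sets and backward invariance of the Julia sets (Proposition \ref{oqUbNFYo4a}), which gives $g_{*}^{-1}(J_{v}(S))\subseteq J_{u}(S)$ and hence $g_{*}(J_{u}(S))\supseteq J_{v}(S)$ --- the reverse inclusion --- and neither inclusion shows that the image of the small ball $B'$ meets $J_{v}(S)$. To salvage this step you would want instead the density in $J_{u}(S)$ of the backward orbit of $w$ under the family $H_{u}^{v}(S)$, obtained for instance by a Montel argument applied to $\{g_{*}\circ h: h\in H_{u}^{u}(S)\}$ on $B'$ together with $w\notin\mathcal{E}_{v}(S)$; this is plausible but is not what you wrote, and it again does not by itself control the length of the resulting word modulo $k$.
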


\begin{proof}
Fix $U$ in the assumption.
Then, Theorem \ref{density_repelling_per_points} allows us to take an arbitrary repelling $\tilde{f}$-periodic point $(\xi, z_0) \in U_0 \cap J(\tilde{f})$ of period $k \in \N$.
Here, there exist $r^{\prime} \in \N$ and a neighborhood $V_{0}$ of $z_{0}$ such that $r := r^{\prime}k$ and 
\begin{equation}\label{xi_r_v_naught_u_naught}
 U_{0} \supset [\xi_{1}, \cdots, \xi_{r}] \times V_{0}.
\end{equation}
Setting $V_{1} := (g_{\xi, r} \circ \cdots \circ g_{\xi, 1})(V_{0})$, we have
\begin{equation}\label{f_tilde_r_v_1}
\tilde{f}^r\left(\left[\xi_{1}, \cdots, \xi_r\right] \times V_0\right) \supset [\xi_{1}] \times V_{1}.
\end{equation}
It follows from (\ref{xi_r_v_naught_u_naught}) and (\ref{f_tilde_r_v_1}) that
$$
\bigcup_{n \in \N} \tilde{f}^{nk}\left(U_{0}\right)
\supset
\bigcup_{n \in \N} \tilde{f}^{nk}\left(\tilde{f}^r\left(\left[\xi_{1}, \cdots, \xi_r\right] \times V_0\right)\right)
\supset
\bigcup_{n \in \N} \tilde{f}^{nk}\left([\xi_{1}] \times V_1\right).
$$
Thus, it suffices to prove $\bigcup_{n \in \N} \tilde{f}^{nk}\left([\xi_{1}] \times V_1\right) \supset K$ to complete the proof.

Let $i\in V$ and take   and $\left(\rho, z_1\right) \in K \cap\left(\pi_1^{-1} X_i(S) \right)$. 
Then,  we have $z_1 \notin  \mathcal{E}_{i}(S)$ by the assumption. 
Under the setting of this argument, we verify the next assertion.
\begin{claim*}
$\# \pi_2(\bigcup_{n \in \N} \tilde{f}^{-nk}(\rho, z_1) \cap \pi_1^{-1}[\xi_{1}] ) = \infty.$
\end{claim*}
We prove this claim.
Firstly, one may observe that $\# \pi_2(\bigcup_{n \in \N} \tilde{f}^{-nk}(\rho, z_1)) = \infty$.
Indeed, if $\# \pi_2(\bigcup_{n \in \N} \tilde{f}^{-nk}(\rho, z_1)) < \infty$, then we have $\# \bigcup_{g \in H^{i}_{i}(S)} g^{-1}(z_1) < \infty$,
which implies the contradiction that $z_{1} \in \mathcal{E}_{i}(S)$.

Next, assume for contradiction that $\# \pi_2(\bigcup_{n \in \N} \tilde{f}^{-nk}(\rho, z_1) \cap \pi_1^{-1}([\xi_{1}]) ) < \infty$.
Noting that $i := i(e_{\tau,1})$ and the topological exactness of $\sigma^{k}$, the definition of $\mathcal{E}_{i}(S)$ and $k \in \N$,  we see that this inequality implies $z_1 \in \mathcal{E}_{i}(S)$.
Hence, our claim holds.
Therefore, the desired result follows from this claim.
            \end{proof}

\begin{thm}\label{73dayb4evI}
Let $S$ be a non-elementary, irreducible and aperiodic rational GDMS. 
Suppose that $\mathcal{E}_{i}(S)\subset F_{i}(S)$.
Let $K$ be a compact subset of $X(S)\times \widehat{\C}$ such that $K\cap (X_{i}(S)\times \mathcal{E}_{i}(S))=\emptyset $ for each $i\in V$.
Let $U$ be an open subset of $X(S)\times \widehat{\C}$ such that $U\cap J(\tilde{f})\neq \emptyset.$ 
Then, there exists an element $N\in \N$ such that for each $n\in \N$ with $n\geq N$, we have $\tilde{f}^{n}(U)\supset K.$ 
\end{thm}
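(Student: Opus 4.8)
The plan is to obtain the statement from Lemma~\ref{ftilde_nk_u_naught}, which already gives $\tilde{f}^{m}(U)\supset K$ for all $m$ in an arithmetic progression $k\N$, and then to promote this to all sufficiently large $m$ by exploiting the product structure of the open set produced there. The hypotheses of the theorem are consumed as follows: Proposition~\ref{density_repelling_per_points} (hence non-elementarity and irreducibility) is used to verify the standing hypothesis of Lemma~\ref{ftilde_nk_u_naught}, and aperiodicity together with the conditions on $\mathcal{E}_{i}(S)$ and $K$ are what let us apply that lemma.

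First I would check the standing hypothesis of Lemma~\ref{ftilde_nk_u_naught}: for every open $U$ with $U\cap J(\tilde{f})\neq\emptyset$ there are $k\in\N$ and a nonempty open $U_{0}\subset U$ with $U_{0}\cap J(\tilde{f})\neq\emptyset$ and $\tilde{f}^{k}(U_{0})\supset U_{0}$. By Proposition~\ref{density_repelling_per_points}, $U$ contains a repelling periodic point $(\xi,z_{0})$; writing $k$ for its period and $h:=g_{\xi,k}\circ\cdots\circ g_{\xi,1}$, we have $\sigma^{k}\xi=\xi$, $h(z_{0})=z_{0}$ and $\|h'(z_{0})\|>1$. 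Linearising $h$ at its repelling fixed point yields a spherical ball $B:=D(z_{0},\varepsilon)$ on which the branch of $h^{-1}$ fixing $z_{0}$ contracts $B$ strictly into itself, so $h(B)\supset B$; taking $N$ a positive multiple of $k$ large enough that $[\xi_{1},\dots,\xi_{N}]\times B\subset U$, the identity $\sigma^{k}\xi=\xi$ gives $\tilde{f}^{k}([\xi_{1},\dots,\xi_{N}]\times B)\supset[\xi_{1},\dots,\xi_{N}]\times h(B)\supset[\xi_{1},\dots,\xi_{N}]\times B$, so $U_{0}:=[\xi_{1},\dots,\xi_{N}]\times B$ works. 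Applying Lemma~\ref{ftilde_nk_u_naught} to the given $U$ and $K$ then produces $k\in\N$, an open $U_{0}=[\xi_{1},\dots,\xi_{N}]\times B\subset U$ of this form around a repelling periodic point $(\xi,z_{0})$ of period $k$ with $k\mid N$ and $\tilde{f}^{k}(U_{0})\supset U_{0}$, and $N_{0}\in\N$ with $\tilde{f}^{nk}(U_{0})\supset K$ for every $n\ge N_{0}$.

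It then remains to handle exponents not divisible by $k$. Fix $r\in\{0,1,\dots,k-1\}$ and set $W_{r}:=\tilde{f}^{r}(U_{0})$. Because $g_{\tau,j}=g_{\xi,j}$ whenever $\tau\in[\xi_{1},\dots,\xi_{N}]$ and $j\le r\le N$, one has $W_{r}=\sigma^{r}([\xi_{1},\dots,\xi_{N}])\times(g_{\xi,r}\circ\cdots\circ g_{\xi,1})(B)$, which is a cylinder times an open subset of $\widehat{\C}$ and hence open; it contains $\tilde{f}^{r}(\xi,z_{0})$, and by the chain rule along the orbit, using that $(g_{\xi,r}\circ\cdots\circ g_{\xi,1})'(z_{0})\neq0$ (forced by $h'(z_{0})\neq0$), this point is again repelling periodic of period $k$. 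Re-running the argument of Lemma~\ref{ftilde_nk_u_naught} with $W_{r}$ in place of $U$ and with the repelling periodic point $\tilde{f}^{r}(\xi,z_{0})$ of period $k$ (its proof only uses the existence of such a point in the open set) yields $N_{r}\in\N$ with $\tilde{f}^{nk}(W_{r})\supset K$, i.e.\ $\tilde{f}^{nk+r}(U_{0})\supset K$, for all $n\ge N_{r}$. Setting $N:=k(1+\max_{0\le r<k}N_{r})$, any integer $m\ge N$ can be written $m=nk+r$ with $0\le r<k$ and $n\ge N_{r}$, so $\tilde{f}^{m}(U)\supset\tilde{f}^{m}(U_{0})=\tilde{f}^{nk+r}(U_{0})\supset K$, which proves the theorem.

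The main obstacle is exactly this last step. Lemma~\ref{ftilde_nk_u_naught} only controls exponents in $k\N$, and the periods it produces for different open sets need not be commensurable, so a naive union of the progressions $nk_{r}+r$ will not exhaust $\N$. The point that unblocks it is that $U_{0}$ can be chosen of product form (cylinder $\times$ ball): its forward images $\tilde{f}^{r}(U_{0})$ stay open, and $\tilde{f}^{r}$ carries the repelling periodic point of period $k$ to another one of the same period $k$, so Lemma~\ref{ftilde_nk_u_naught} can be re-applied with a single uniform period $k$ across all residues. The remaining ingredients are routine: the symbolic identification of $\tilde{f}^{r}([\xi_{1},\dots,\xi_{N}]\times B)$, the use of $\sigma^{k}\xi=\xi$ to make $U_{0}$ absorbing under $\tilde{f}^{k}$, and the invariance of the multiplier of $\tilde{f}^{k}$ along the periodic orbit.
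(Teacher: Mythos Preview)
Your argument is correct but differs substantially from the paper's. The paper does not treat the residues $r\bmod k$ separately; instead it first \emph{enlarges} $K$ to a compact set that is forward invariant under $\tilde{f}$. Using the hypothesis $\mathcal{E}_{i}(S)\subset F_{i}(S)$, it takes hyperbolic $r$-neighbourhoods $A_{i}(r)$ of $\mathcal{E}_{i}(S)$ inside the relevant Fatou components and notes that $h(A_{i}(r))\subset A_{j}(r)$ for every $h\in H_{i}^{j}(S)$; choosing $r$ small enough that $K\cap(X_{i}(S)\times A_{i}(r))=\emptyset$ for all $i$, one may replace $K$ by $\bigcup_{i}X_{i}(S)\times(\widehat{\C}\setminus A_{i}(r))$, which satisfies $\tilde{f}(K)\supset K$. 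A single application of Lemma~\ref{ftilde_nk_u_naught} then gives $\tilde{f}^{nk}(U_{0})\supset K$ for large $n$, and the gap between multiples of $k$ is closed trivially since $\tilde{f}^{l}(\tilde{f}^{nk}(U_{0}))\supset\tilde{f}^{l}(K)\supset K$ for every $l\ge 0$.

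Your route---pushing $U_{0}$ forward by $\tilde{f}^{r}$, observing that $W_{r}$ remains a cylinder times an open set and still contains a repelling periodic point of the \emph{same} period $k$, and re-running Lemma~\ref{ftilde_nk_u_naught} once per residue---is a legitimate alternative. It costs $k$ invocations of the lemma rather than one, but it has the advantage that it never uses the assumption $\mathcal{E}_{i}(S)\subset F_{i}(S)$: only the disjointness $K\cap(X_{i}(S)\times\mathcal{E}_{i}(S))=\emptyset$ enters, through Lemma~\ref{ftilde_nk_u_naught}. So your proof in fact establishes the theorem without that extra hypothesis, whereas the paper's argument genuinely consumes it to build the forward-invariant enlargement of $K$.
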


\begin{proof}
By the irreducibility of $S$, for each $i, j\in V$, 
and for each $h\in H_{i}^{j}(S)$, we have 
$h(\mathcal{E}_{i}(S))\subset \mathcal{E}_{j}(S)$ and $h(F_{i}(S))\subset F_{j}(S).$ 
For each connected component $W$ of $F_{i}(S)$ with 
$W\cap \mathcal{E}_{i}(S)\neq \emptyset $, we take the hyperbolic metric. 
For each $r>0$, we denote by 
$A_{i}(r)$ the $r$-open neighborhood of $\mathcal{E}_{i}(S)$ in $F_{i}(S)$ with respect to the hyperbolic metric. Then, for each $i, j\in V$ and for each 
$h\in H_{i}^{j}(S)$, we have $h(A_{i}(r))\subset A_{j}(r).$ 
From the assumptions of our theorem, there exists a small $r>0$
 such that $K\cap ( X_{i}(S)\times A_{i}(r))=\emptyset $ for each 
 $i\in V.$ Thus $\tilde{f}(\cup _{i\in V} X_{i}(S)\times 
 (\widehat{\C}\setminus A_{i}(r)))\supset \cup _{i\in V} X_{i}(S)\times 
 (\widehat{\C}\setminus A_{i}(r)).$ 
 Hence, we may assume that $\tilde{f}(K)\supset K.$ 
 
 By Theorem \ref{density_repelling_per_points}, there exist a natural number $k$ and a 
 point $(\rho, a)\in U\cap J(\tilde{f})$ such that 
 $(\rho, a)$ is a repelling fixed point of $\tilde{f}^{k}.$ 
 Hence, there exists a 
 non-empty open neighborhood $U_{0}$ 
 of $(\rho, a)$ in $U$ such that $\tilde{f}^{k}(U_{0})\supset 
 U_{0}$ and $U_{0}\cap J(\tilde{f})\neq  \emptyset.$  
   
Furthermore, Lemma \ref{ftilde_nk_u_naught} shows that  $\tilde{f}^{nk}(U_{0})\supset K$ for any sufficiently large $n$.
Since $\tilde{f}^{l}(\tilde{f}^{nk}(U_{0})) \supset \tilde{f}^{l}(K) \supset K$ for each $l \in \N$,  there exists a number $N\in \N$ such that 
for each $n\in \N$ with $n\geq N$, we have 
$\tilde{f}^{n}(U)\supset K.$
\end{proof}

\begin{cor}\label{nnx6d2Yg0P}
Let $S$ be a non-elementary, irreducible and aperiodic rational GDMS. 
Suppose that $\mathcal{E}_{i}(S)\subset F_{i}(S)$ for each $i\in V$.
Then, $\tilde{f}: J(\tilde{f})\to J(\tilde{f})$ is topologically exact. 
\end{cor}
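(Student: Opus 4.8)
The plan is to deduce Corollary~\ref{nnx6d2Yg0P} directly from Theorem~\ref{73dayb4evI} by choosing an appropriate exhausting family of compact sets $K$ and unwinding the definition of topological exactness. First I would fix a nonempty open subset $U$ of $X(S)\times\widehat{\C}$ with $U\cap J(\tilde f)\neq\emptyset$; by Proposition~\ref{UHyK6FOUuf}\ref{sc15xYijnn} we have $\tilde f(J(\tilde f))=J(\tilde f)=\tilde f^{-1}(J(\tilde f))$, so the claim ``$\tilde f^n(U)\supset J(\tilde f)$'' makes sense as a statement about the dynamics on $J(\tilde f)$, and it suffices to find $N$ with $\tilde f^n(U\cap J(\tilde f))\supset J(\tilde f)$ for all $n\geq N$. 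The subtlety is that Theorem~\ref{73dayb4evI} only covers inclusions $\tilde f^n(U)\supset K$ for compact sets $K$ avoiding the ``bad'' sets $X_i(S)\times\mathcal E_i(S)$, whereas $J(\tilde f)$ itself may meet these sets; so I would work with a carefully chosen compact subset of $J(\tilde f)$ that is large enough to recover the full Julia set after one more application of $\tilde f$.

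The key step is the following. For each $i\in V$, since $S$ is non-elementary we have $\#J_i(S)\geq 3$, hence by definition $\mathcal E_i(S)\cap J_i(S)=\emptyset$; combined with the hypothesis $\mathcal E_i(S)\subset F_i(S)$ this is automatic, but more importantly it shows $\mathcal E_i(S)$ is disjoint from the fiber Julia sets living over $X_i(S)$, so $(X_i(S)\times\mathcal E_i(S))\cap J(\tilde f)$ has a precise structure. Pick a small $r>0$ and set $K_r:=\bigcup_{i\in V}\bigl(\closure(X_i(S))\times(\widehat\C\setminus A_i(r))\bigr)\cap J(\tilde f)$, where $A_i(r)$ is the $r$-neighborhood of $\mathcal E_i(S)$ in $F_i(S)$ as in the proof of Theorem~\ref{73dayb4evI}; this $K_r$ is compact (closed subset of the compact set $J(\tilde f)$), satisfies $K_r\cap(X_i(S)\times\mathcal E_i(S))=\emptyset$ for each $i$, and by forward-invariance of the $A_i(r)$ under the maps in $H_i^j(S)$ one gets $\tilde f(K_r)\supset$ essentially all of $J(\tilde f)$ except possibly a piece trapped near the exceptional points. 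More precisely, I would argue that $\bigcup_r K_r$ exhausts $J(\tilde f)\setminus\bigcup_i(X_i(S)\times\mathcal E_i(S))$ and that $\tilde f(J(\tilde f)\setminus\bigcup_i X_i(S)\times\mathcal E_i(S))\supset J(\tilde f)$, using that any point of $J(\tilde f)$ over $X_i(S)$ has a $\tilde f$-preimage avoiding $X_j(S)\times\mathcal E_j(S)$ for the relevant $j$ (this is where $\#\bigcup_{g\in H_j^j(S)}g^{-1}(z)\geq 3$ for $z\notin\mathcal E_j(S)$ enters, giving at least three preimages, at least one of which is not exceptional).

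So the argument runs: apply Theorem~\ref{73dayb4evI} with $K=K_r$ to obtain $N_r$ with $\tilde f^n(U)\supset K_r$ for all $n\geq N_r$; then $\tilde f^{n+1}(U)\supset \tilde f(K_r)\supset J(\tilde f)$ for all $n\geq N_r$ provided $r$ was chosen small enough that $\tilde f(K_r)\supset J(\tilde f)$. Setting $N:=N_r+1$ yields $\tilde f^n(U)\cap J(\tilde f)\supset\tilde f^n(U\cap J(\tilde f))$ containing $J(\tilde f)$ for all $n\geq N$, which is precisely topological exactness of $\tilde f$ on $J(\tilde f)$. The main obstacle I anticipate is verifying that $r$ can be chosen small enough that $\tilde f(K_r)\supset J(\tilde f)$, i.e., that the exceptional fibers really do get ``covered'' after one step; this requires checking that for each $i\in V$ and each $(\rho,z)\in J(\tilde f)$ with $\rho\in X_i(S)$, there is a one-step $\tilde f$-preimage $(\eta,w)$ with $\eta\in X_j(S)$, $j=i(e_{\rho,0})$ suitably interpreted via irreducibility, and $w\notin A_j(r)$ — and that the choice of $r$ making this work is uniform over $J(\tilde f)$, which follows from compactness of $J(\tilde f)$ together with the fact that $\mathcal E_j(S)$ is a finite set disjoint from $J_j(S)$. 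I would also double-check the edge case where $\bigcup_i(X_i(S)\times\mathcal E_i(S))\cap J(\tilde f)=\emptyset$, in which case one can take $K_r=J(\tilde f)$ outright and the corollary is immediate from Theorem~\ref{73dayb4evI}.
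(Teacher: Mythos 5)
Your proposal arrives at the right conclusion, but the elaborate detour through the sets $K_r$ and the extra application of $\tilde f$ is built on a worry that never materialises: under the hypothesis $\mathcal{E}_{i}(S)\subset F_{i}(S)$, the set $J(\tilde f)$ \emph{cannot} meet $X_{i}(S)\times \mathcal{E}_{i}(S)$. Indeed, for $\xi\in X_{i}(S)$ equicontinuity of $H_{i}(S)$ near a point implies equicontinuity of the compositions along $\xi$, so $J_{\xi}\subset J_{i}(S)$; since $X_{i}(S)$ is clopen in $X(S)$ this persists under taking the closure defining $J(\tilde f)$, whence $\pi_{2}\bigl(J(\tilde f)\cap \pi_{1}^{-1}(X_{i}(S))\bigr)\subset J_{i}(S)$, which is disjoint from $\mathcal{E}_{i}(S)\subset F_{i}(S)$. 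So the ``edge case'' you mention in your last sentence is in fact the only case, and the paper's proof is exactly that two-line observation: take $K=J(\tilde f)$ in Theorem~\ref{73dayb4evI} and read off topological exactness. Everything else in your plan (the exhaustion by $K_r$, the requirement $\tilde f(K_r)\supset J(\tilde f)$, the one-step preimage argument) is unnecessary.

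Two of the intermediate claims you would rely on if you did pursue the long route are also not sound as stated. First, ``since $S$ is non-elementary we have $\#J_i(S)\geq 3$, hence by definition $\mathcal{E}_i(S)\cap J_i(S)=\emptyset$'' is false: that disjointness does not follow from the definitions alone (it is precisely the nontrivial content of the first part of Corollary~\ref{ghraCYWJEm}, proved there only under expandingness); here it must be extracted from the standing hypothesis $\mathcal{E}_i(S)\subset F_i(S)$, as you note parenthetically. Second, the step ``at least three preimages, at least one of which is not exceptional'' is unjustified: knowing $\#\bigcup_{g\in H_j^j(S)}g^{-1}(z)\geq 3$ does not by itself prevent all those preimages from being exceptional, and in any case what you would need is a one-step $\tilde f$-preimage lying in $K_r$, not merely a preimage of $\pi_2$-coordinate under some element of $H_j^j(S)$. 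Fortunately, once you observe that $J(\tilde f)\subset K_r$ for every $r>0$ (because $A_i(r)\subset F_i(S)$ while the relevant second coordinates lie in $J_i(S)$), none of this is needed.
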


\begin{proof}
From the assumptions of our Corollary, 
$J(\tilde{f})\cap (X_{i}(S)\times \mathcal{E}_{i}(S))=\emptyset $ for each $i\in V$. 
Thus by Theorem \ref{73dayb4evI}, $\tilde{f}: J(\tilde{f})\to J(\tilde{f})$ is topologically exact. 
\end{proof}

\begin{cor}\label{ghraCYWJEm}
Let $S$ be a non-elementary, irreducible and aperiodic rational GDMS. 
Suppose that the associated skew product map $\tilde{f}$ is expanding along fibers.
Then, $\mathcal{E}_{i}(S)\subset F_{i}(S)$ for each $i\in V$ and thus $\tilde{f}: J(\tilde{f})\to J(\tilde{f})$ is topologically exact. 
\end{cor}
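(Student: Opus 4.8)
The plan is to derive the statement from Corollary \ref{nnx6d2Yg0P}: it suffices to prove $\mathcal{E}_i(S)\subset F_i(S)$ for every $i\in V$, since then that corollary yields the topological exactness directly. So I would argue by contradiction and suppose $z_0\in\mathcal{E}_i(S)\cap J_i(S)$ for some $i\in V$. The first step is to understand the full backward orbit $B:=\bigcup_{h\in H_i^i(S)}h^{-1}(z_0)$. Since $z_0$ is exceptional, $\#B\le 2$; since $H_i^i(S)$ is a semigroup, $B$ is backward invariant under $H_i^i(S)$; and the surjectivity of each $h$ together with $\#h(B)\le\#B$ forces every $h\in H_i^i(S)$ to permute $B$. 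This already yields $z_0\in B$ and, because a square of a permutation of a set of at most two elements is the identity, $h^2$ fixes $B$ pointwise for every $h$. Hence $(h^2)^{-1}(z_0)\subset B$ reduces to the single point $z_0$, so $z_0$ is a totally ramified fixed point of $h^2$; consequently, for every $h\in H_i^i(S)$ either $\deg h=1$ or $(h^2)^{\prime}(z_0)=0$.

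The next step is to realise $z_0$ inside the skew product Julia set over the vertex $i$ and to produce convenient periodic points there. Since $J_i(S)=J(H_i^i(S))$ by Proposition \ref{UHyK6FOUuf}\ref{XfUMBmTKD2} and $\#J_i(S)\ge 3$ by non-elementariness, $z_0$ is a limit of repelling fixed points $w_k$ of elements $g_{\tau_k}\in H_i^i(S)$ (by the classical density of repelling fixed points in $J(H_i^i(S))$ quoted in the proof of Proposition \ref{density_repelling_per_points}); each $(\tau_k^{\infty},w_k)$ is a repelling periodic point of $\tilde f$, hence lies in $J(\tilde f)$ by Proposition \ref{density_repelling_per_points}, and passing to a convergent subsequence in the compact clopen set $X_i(S)$ gives $\eta\in X_i(S)$ with $(\eta,z_0)\in J(\tilde f)$. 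Now, fixing $h\in H_i^i(S)$ and writing $h^2=g_\tau$ for an $i\to i$ loop $\tau$ of length $N$, the identities $g_\tau(z_0)=z_0$ and $(h^2)^{-1}(z_0)=\{z_0\}$ together with $\tilde f^{-1}(J(\tilde f))=J(\tilde f)$ (Proposition \ref{UHyK6FOUuf}\ref{sc15xYijnn}) show that $(\tau\eta,z_0)\in\tilde f^{-N}(\eta,z_0)\subset J(\tilde f)$; iterating this and using that $J(\tilde f)$ is closed, the periodic point $(\tau^{\infty},z_0)$ lies in $J(\tilde f)$, and its multiplier under $\tilde f^{N}$ equals $(h^2)^{\prime}(z_0)$.

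Now expandingness bites. Applying $\inf_{\tilde z\in J(\tilde f)}\|(\tilde f^n)^{\prime}(\tilde z)\|\ge C\lambda^n$ at $(\tau^{\infty},z_0)$ for $n=Nm$ and using that $\|(\tilde f^{Nm})^{\prime}(\tau^{\infty},z_0)\|=|(h^2)^{\prime}(z_0)|^{m}$ (the spherical conversion factors cancel over a full period), I get $|(h^2)^{\prime}(z_0)|\ge\lambda^{N}>1$ by letting $m\to\infty$. In particular $(h^2)^{\prime}(z_0)\ne 0$, so by the dichotomy of the first step $\deg h=1$; as $h$ was arbitrary, $H_i^i(S)$ consists only of Möbius maps. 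This Möbius case is the step I expect to be the main obstacle, since the derivative inequality alone no longer gives a contradiction. To handle it, note that a Möbius map permuting $B$ is either the identity on $B$ or an involution; the latter would force $h^2=\mathrm{id}$ and hence $(h^2)^{\prime}(z_0)=1$, contradicting $|(h^2)^{\prime}(z_0)|>1$, so every $h\in H_i^i(S)$ fixes $z_0$, and then $|h^{\prime}(z_0)|^{2}=|(h^2)^{\prime}(z_0)|>1$ shows every $h$ is repelling at $z_0$. Conjugating $z_0$ to $\infty$ turns every element of $H_i^i(S)$ into an affine contraction $z\mapsto a_hz+b_h$ with $|a_h|<1$, and then the elementary estimate $d_{\widehat{\C}}(g(z),g(w))\le 2|a_g|\,|z-w|\le 2|z-w|$ makes the family $H_i^i(S)$ equicontinuous on all of $\C$, whence $J_i(S)=J(H_i^i(S))\subset\{z_0\}$, contradicting $\#J_i(S)\ge 3$. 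Thus $\mathcal{E}_i(S)\cap J_i(S)=\emptyset$ for every $i\in V$, and Corollary \ref{nnx6d2Yg0P} completes the proof.
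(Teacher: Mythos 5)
Your proof is correct, and its overall architecture matches the paper's: you reduce the corollary to showing $\mathcal{E}_i(S)\cap J_i(S)=\emptyset$ so that Corollary \ref{nnx6d2Yg0P} applies, and you first force every $h\in H_i^i(S)$ to be M\"obius by playing the vanishing derivative of $h^2$ at the exceptional point against expandingness along fibers (the paper does exactly this, citing Beardon for the fact that an exceptional point of a degree-$\ge 2$ map is a superattracting fixed point of $h^2$, whereas you rederive it from the backward-orbit combinatorics of $B$). The genuine divergence is in the M\"obius endgame. The paper picks a repelling fixed point $y\in J_i(S)\setminus\mathcal{E}_i(S)$ of some loop $h$, notes that $h^2$ is then loxodromic with the exceptional point as its \emph{attracting} fixed point, and contradicts expandingness a second time by evaluating $\|(\tilde f^{s})'\|$ at a point of $J(\tilde f)$ over a word representing $h^{2N}$. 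You instead upgrade the expanding estimate at the periodic point $(\tau^\infty,z_0)\in J(\tilde f)$ to the quantitative bound $|(h^2)'(z_0)|\ge\lambda^{|\tau|}>1$ for \emph{every} loop, which simultaneously excludes involutions (so every loop fixes $z_0$) and makes $z_0$ repelling for every loop; conjugating $z_0$ to $\infty$ turns $H_i^i(S)$ into a family of affine contractions, whose equicontinuity on $\C$ forces $\#J(H_i^i(S))\le 1$ and contradicts non-elementariness rather than expandingness. Both finishes are valid; yours costs an extra step (establishing $(\tau^\infty,z_0)\in J(\tilde f)$ via Proposition \ref{density_repelling_per_points} and the complete invariance $\tilde f^{-1}(J(\tilde f))=J(\tilde f)$, which the paper also needs but only for a single preimage word) but buys a self-contained treatment of the exceptional-point dichotomy and a final contradiction that does not invoke expandingness a second time.
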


\begin{proof}
Under the assumptions of our corollary, suppose that 
there exists an $i\in V$ such that 
$\mathcal{E}_{i}(S)\cap J_{i}(S)\neq \emptyset .$ 
Let $x\in \mathcal{E}_{i}(S)\cap J_{i}(S).$ 
Then for each $h\in H^{i}_{i}(S)$, $x\in E(h)$ and 
$h^{2}(x)=x.$ We now show the following claim. 

\noindent 
\begin{claim*}\label{x4GD0cRUHa}
For each $h\in H_{i}^{i}(S)$ we have  $\deg (h)=1.$ 
\end{claim*}

To prove this claim, suppose there exists an $h\in H_{i}^{i}(S)$ with $\deg (h)\geq 2$. Then since $x\in E(h)$ and $\deg (h)\geq 2$, 
$x$ is a super attracting fixed point of $h^{2}$ \cite{beardon2000iteration}.
Since $S$ is non-elementary, $J_{i}(S)=J(H_{i}^{i}(S))$ is equal 
to the closure of the set of all repelling fixed points of elements of 
$H_{i}^{i}(S).$ Thus $\pi_{1} (J_{i}(\tilde{f}))=J_{i}(S)$, 
where $J_{i}(\tilde{f})=\{ (\xi, z) \in J(\tilde{f})\mid i(e_{\xi, 1})=i\}.$ 
In particular, there exists an element $\xi \in X_{i}(S)$ such that 
$(\xi ,x)\in J(\tilde{f})$. Let  $\rho =(g_{j}, e_{j})_{j=1}^{r}\in (\Rat\times E)^{r}$ be a finite 
admissible element 
such that $g_{r}\circ \cdots \circ g_{1}=h^{2}.$ Then 
$(\rho \xi, x)\in \tilde{f}^{-r}(J(\tilde{f}))=J(\tilde{f}).$ 
Moreover, since $x$ is a super attracting fixed point of 
$h^{2}$, we have $(\tilde{f}^{r})'(\rho \xi, x)=0.$ 
However, this contradicts that $\tilde{f}$ is expanding on 
$J(\tilde{f}).$ Thus we have proved Claim \ref{x4GD0cRUHa}. 

Since $S$ is non-elementary, 
$J_{i}(S)=J(H^{i}_{i}(S))$ is an infinite set and 
is equal to the closure of the set of all repelling fixed points of 
elements of $H_{i}^{i}(S).$ Thus there exist an element 
$y\in J_{i}(S)\setminus \mathcal{E}_{i}(S)$ and an element $h\in H_{i}^{i}(S)$ 
such that $y$ is a repelling fixed point of $h.$ 
Then $y$ is a repelling fixed point of $h^{2}.$ 
By Claim \ref{x4GD0cRUHa}, $h^{2}$ is a loxodromic M\"obius transformation, and 
$x$ is an attracting fixed point of $h^{2}.$ 
  
  By the argument in the proof of Claim \ref{x4GD0cRUHa}, there exists an element 
  $\xi \in X(S)$ such that $(\xi, x)\in J(\tilde{f})$.
  Since $\tilde{f}$ is expanding on $J(\tilde{f})$, there exists a number 
  $N\in \N $ such that for each $n \ge N$   and for each $(\alpha, z)\in J(\tilde{f})$, 
  we have
  \begin{equation}\label{wMLNpYCayV}
\| (\tilde{f}^{n})'(\alpha ,z)\| >2.
\end{equation}
  Let $\beta =(\gamma _{j}, e_{j})_{j=1}^{s}\in 
  (\Rat \times E)^{s}$ be a finite admissible element 
  such that $\gamma _{s}\circ \cdots \circ \gamma _{1}=h^{2N}.$ 
  Then $(\beta \xi, x)\in (\tilde{f}^{s})^{-1}(J(\tilde{f}))=J(\tilde{f})$. 
  Moreover, since $x$ is an attracting fixed point of $h^{2}$, 
  we have $\| (\tilde{f}^{s})'(\beta \xi, x)\| <1.$  
However, this contradicts (\ref{wMLNpYCayV}).  

Therefore $\mathcal{E}_{i}(S)\cap J_{i}(S)= \emptyset $ for each $i\in V.$ 
By Theorem \ref{73dayb4evI}, it follows that 
$\tilde{f}$ is topologically exact on $J(\tilde{f}).$  
Hence, we have proved our corollary. 
\end{proof}

\subsection{Hyperbolicity implies expandingness}\label{1FV2xMdKQT}In this subsection, we shall prove that Main result C by verifying that the hyperbolic system $S$ satisfies the assumption of Theorem 2.17 of \cite{MR1827119}, which is an assertion to the same effect as ours. 
The only task is to prove that under certain assumptions a rational GDMS satisfies a condition called Condition (C2) given below. 
To clarify what this condition is, let us present the setting in which it was introduced.

\begin{dfn}\cite[Definition 2.1]{MR1827119}\label{T4fhvGEuO4} 
Let $(X,d)$ be a compact metric space. 
Suppose that $p: X \to X$ is a continuous map and $q_x: \widehat{\C} \to \widehat{\C}$ is a rational map with a degree at least one for each $x \in X$.
Then, a map $\tilde{f}: X \times \widehat{\C} \to X \times \widehat{\C}$ given by $\tilde{f}((x, y)):=(p(x), q_x(y))$ is called \textit{a rational skew product}, where $p: X \to X$ is a continuous map and $q_x:\widehat{\C} \to\widehat{\C}$ is a rational map with degree at least one for each $x \in X$.
Then we say that $\tilde{f}: X \times\widehat{\C} \to X \times\widehat{\C}$ is a rational skew product.
For each $n \in \N$ and $x \in X$, we set $q_x^{(n)}:=q_{p^{n-1}(x)} \circ \cdots \circ q_x$ and $\tilde{f}_x^n := \tilde{f}^n|_{\pi_X^{-1}(x)}$. 
For each $x \in X$, let $F_x  :=\{y \in \widehat{\C} : (q_x^{(n)})_{n \in \N} \text { is normal in a neighborhood of } y\}$ and $J_{\xi} := \widehat{\C} \setminus F_x$ and $\tilde{J}_x=\{x\} \times J_{\xi}$.
Further, we set $ J(\tilde{f}) := \overline{\bigcup_{x \in X} \tilde{J}_x} \subset X \times \widehat{\C}$ and $F(\tilde{f}):=(X \times \widehat{\C}) \setminus J(\tilde{f})$.
We endow the set of all non-empty subsets of $\widehat{\C}$ with the Hausdorff metric.
\end{dfn}

\begin{dfn}\label{VswE1URs0W}\cite[Condition (C2)]{MR1827119}
We say that a rational skew product $\tilde{f}$ satisfies Condition (C2) if for each $x_{0} \in X$ there exists an open neighborhood $O$ of $x_{0}$ and a family $(D_x)_{x \in O}$ of round discs in $\widehat{\C}$ such that the following three conditions are satisfied:
\begin{enumerate}[label=(\roman*)]
\item\label{Xckow2qKrc} $\overline{\bigcup_{x \in O} \bigcup_{n \in \N_{0}} \tilde{f}^n(\{x\} \times D_x)} \subset F(\tilde{f})$;
\item\label{GOJn2gojWL} for any $x \in O$, we have that $\diam(q_x^{(n)}(D_x)) \to 0$ as $n \to \infty$;
\item\label{iCxAGUDljH} $x \mapsto D_x$ is continuous in $O$.
\end{enumerate}
\end{dfn}

\begin{dfn}\cite{MR4002398} 
Let $L_i$ be a subset of $\widehat{\C}$ for each $i \in V$.
For a family $\mathcal{F} \subset \Rat$ and a set $A \subset \widehat{\C}$, we write $\mathcal{F}(A):=\bigcup_{f \in \mathcal{F}} f(A)$ and $\mathcal{F}^{-1}(A):=\bigcup_{f \in \mathcal{F}} f^{-1}(A)$, where we set $\mathcal{F}(A):=\emptyset$ and  $\mathcal{F}^{-1}(A):=\emptyset$ if $\mathcal{F}=\emptyset$. 
Utilizing these symbols, we name the following properties.
\begin{enumerate}[label=(\roman*)]
\item The family $\left(L_i\right)_{i \in V}$ is said to be \textit{forward $S$-invariant} if $\Gamma_e\left(L_{i(e)}\right) \subset L_{t(e)}$ for each $e \in E$.
\item The family $\left(L_i\right)_{i \in V}$ is said to be \textit{backward $S$-invariant} if $\Gamma_e^{-1}\left(L_{t(e)}\right) \subset L_{i(e)}$ for each $e \in E$.
\end{enumerate}
\end{dfn}

\begin{prp}  \label{oqUbNFYo4a} 
Let $S$ be a rational GDMS. Then, the families $(F_i(S))_{i \in V}$ and $(P_{i}(S))_{i \in V}$ are both forward $S$-invariant and the family $(J_i(S))_{i \in V}$ is backward $S$-invariant.
\end{prp}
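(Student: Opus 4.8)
Fix an edge $e\in E$, a map $g\in\Gamma_e$, and write $i:=i(e)$ and $j:=t(e)$. The three assertions amount to $g(F_i(S))\subset F_j(S)$, $g^{-1}(J_j(S))\subset J_i(S)$, and $g(P_i(S))\subset P_j(S)$. Since $J_k(S)=\widehat{\C}\setminus F_k(S)$, the first is literally the contrapositive of the second for the single map $g$ (the implication $z\in F_i(S)\Rightarrow g(z)\in F_j(S)$ is the same as $g(z)\in J_j(S)\Rightarrow z\in J_i(S)$). Hence it suffices to prove forward $S$-invariance of $(F_i(S))_{i\in V}$ and of $(P_i(S))_{i\in V}$, from which backward $S$-invariance of $(J_i(S))_{i\in V}$ follows for free.

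\textbf{Forward invariance of the Fatou families.} The key combinatorial point is that if $\eta$ is an admissible word with $i(\eta_1)=j=t(e)$, then the word $e\eta$ obtained by prepending the single edge $e$ (with the map $g$) is admissible and starts at $i$, so $g_\eta\circ g\in H_i(S)$; thus $H_j(S)\circ g:=\{h\circ g: h\in H_j(S)\}\subset H_i(S)$. Now let $z\in F_i(S)$, so $H_i(S)$, and a fortiori $H_j(S)\circ g$, is equicontinuous (equivalently, by compactness of $\widehat{\C}$ and Arzel\`a--Ascoli, a normal family) on some neighbourhood $U$ of $z$. It remains to transfer equicontinuity across $g$: since $g$ is a non-constant holomorphic self-map of $\widehat{\C}$ it is open, and in suitable local coordinates near $g(z)$ it has the normal form $\zeta\mapsto\zeta^{k}$ with $k$ the local degree of $g$ at $z$; in particular, for $w$ close to $g(z)$ one can choose a preimage $z_w\in g^{-1}(w)\cap U$ with $d_{\widehat{\C}}(z_w,z)\le C\,d_{\widehat{\C}}(w,g(z))^{1/k}$. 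Then for $h\in H_j(S)$ one has $d_{\widehat{\C}}(h(w),h(g(z)))=d_{\widehat{\C}}\bigl((h\circ g)(z_w),(h\circ g)(z)\bigr)$, which is controlled uniformly in $h$ by the common modulus of continuity of $H_j(S)\circ g$ on $U$; this gives equicontinuity of $H_j(S)$ on a neighbourhood of $g(z)$, i.e.\ $g(z)\in F_j(S)$. (Alternatively, one may cite the standard fact from rational-semigroup theory that for $\mathcal{F}\subset\Rat$ and $g\in\Rat$, $\mathcal{F}$ is normal at $g(z)$ iff $\mathcal{F}\circ g$ is normal at $z$.)

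\textbf{Forward invariance of the post-critical families.} Here the combinatorics is dual: if $\xi$ is an admissible word ending at vertex $i=i(e)$, then $\xi e$ is admissible, ends at $j$, and $g_{\xi e}=g\circ g_\xi$; hence $g\circ H^i(S)\subset H^j(S)$. By the chain rule $(g\circ h)'=\bigl(g'\circ h\bigr)\cdot h'$ vanishes at every critical point of $h$, so every critical point of $h$ is a critical point of $g\circ h$, with corresponding critical value $g$ applied to a critical value of $h$; thus $g(\{\text{critical values of }h\})\subset\{\text{critical values of }g\circ h\}$ for each $h\in H^i(S)$. Taking the union over $h\in H^i(S)$ gives $g\bigl(\bigcup_{h\in H^i(S)}\{\text{critical values of }h\}\bigr)\subset\bigcup_{h'\in H^j(S)}\{\text{critical values of }h'\}\subset P_j(S)$, and since $g$ is continuous and $P_j(S)$ is closed, $g(\overline{A})\subset\overline{g(A)}$ yields $g(P_i(S))\subset P_j(S)$.

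\textbf{Main obstacle.} The only genuinely delicate step is the transfer of equicontinuity (normality) through $g$ across a critical point; everything else is a direct consequence of the definitions together with the prepend/append combinatorics on edges and the chain rule. In carrying it out one should verify the normal-form reduction and the H\"older estimate for the local inverse branches of $g$ near a critical value (or invoke the corresponding lemma in the rational-semigroup literature), and check that the degenerate case in which the relevant normal limit is the constant $\infty$ causes no difficulty.
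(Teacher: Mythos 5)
Your proof is correct, and it takes the only natural (definitional) route: the paper itself gives no argument here beyond noting that the $P_i$-invariance follows from the definitions and citing Lemma 2.15 of \cite{MR4002398} for the Fatou/Julia assertions, which is exactly the content you supply (prepend/append combinatorics for $H_j(S)\circ g\subset H_i(S)$ and $g\circ H^i(S)\subset H^j(S)$, the chain rule, and the complementation argument identifying the Fatou and Julia statements). The one genuinely nontrivial step, transferring equicontinuity of $H_j(S)\circ g$ at $z$ to equicontinuity of $H_j(S)$ on a neighbourhood of $g(z)$, is handled adequately by your local normal form $\zeta\mapsto\zeta^k$ (applied at each preimage in $U$ of each point of the open image neighbourhood, not only at $z$ itself), or more cleanly by the normality formulation together with the open mapping theorem and the compactness of $g^{-1}(K)\cap \overline{D}$ for compacta $K$ in the image.
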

\begin{proof}
The forward $S$-invariance of $(P_{i}(S))_{i \in V}$ follows from the definition. See Lemma 2.15 of \cite{MR4002398} for the other assertions. 
\end{proof}

\begin{prp}\label{Oqeiwu19LMc}
Let $S := \left(V, E,\left(\Gamma_e\right)_{e \in E}\right)$ be a non-elementary, hyperbolic, irreducible rational GDMS.
 For each $i \in V$ and $h \in H_{i}^{i}(S)$, assume that $h$ is loxodromic when $\deg(h)= 1$.
 Then, the skew product map $\tilde{f}$ of $S$ satisfies Condition (C2).
\end{prp}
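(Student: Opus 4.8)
The plan is to verify that the skew product $\tilde{f}$ satisfies Condition (C2) of Definition \ref{VswE1URs0W}; by the discussion preceding Definition \ref{T4fhvGEuO4} this is exactly what is needed to obtain Main result C from Theorem 2.17 of \cite{MR1827119}. Fix $\xi_{0} = (g_{\xi_0,n}, e_{\xi_0,n})_{n\in\N}\in X(S)$ and put $i_{0} := i(e_{\xi_0,1})$. I would take the open neighbourhood $O := [\xi_{0,1}]$, the cylinder of length one, so that $i(e_{\xi,1}) = i_{0}$ for every $\xi\in O$, and choose $D_{\xi}\equiv D$ to be a single round disc independent of $\xi$, which makes condition (iii) automatic. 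Since $q_{\xi}^{(n)}(D) = g_{\xi,n}\circ\cdots\circ g_{\xi,1}(D)$, conditions (i) and (ii) reduce to statements about forward images of $D$ along admissible words issuing from $i_{0}$.

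The core of the argument is to construct a forward $S$-invariant family $(L_{j})_{j\in V}$ of compact sets with $L_{j}\subset F_{j}(S)$ for each $j$ and $\Int L_{i_{0}}\neq\emptyset$. Assume first that $P_{j}(S)\neq\emptyset$ for every $j\in V$. Every connected component $W$ of $F_{j}(S)$ is hyperbolic, because $\widehat{\C}\setminus W\supset J_{j}(S)$ has at least three points since $S$ is non-elementary; equip each such $W$ with its hyperbolic metric. Since $S$ is hyperbolic, $P_{j}(S)$ is a compact subset of $F_{j}(S)$, and $P_{j}(S)\cap W$ is compact in $W$ for every component $W$ meeting it. I would let $L_{j}$ be the closed $r$-neighbourhood of $P_{j}(S)$ with respect to these hyperbolic metrics, distances being measured inside components, for a small fixed $r>0$. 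Forward $S$-invariance, i.e. $\Gamma_{e}(L_{i(e)})\subset L_{t(e)}$, then follows from the Schwarz--Pick lemma together with the forward $S$-invariance of $(F_{j}(S))_{j}$ and of $(P_{j}(S))_{j}$ given in Proposition \ref{oqUbNFYo4a}; this is the same mechanism used in the proof of Theorem \ref{73dayb4evI}. The inclusion $L_{j}\subset F_{j}(S)$ is where hyperbolicity is decisive: if $z_{k}\in L_{j}$ with $z_{k}\to z_{*}\in J_{j}(S)$, then choosing $p_{k}\in P_{j}(S)$ in the component of $z_{k}$ at hyperbolic distance at most $r$ from $z_{k}$ and extracting $p_{k}\to p_{*}\in P_{j}(S)$, one shows (within a fixed component the closed hyperbolic $r$-neighbourhood of $P_{j}(S)\cap W$ is compact in $W$, and distinct components cannot both accumulate at an interior point) that $p_{*}=z_{*}$, so $z_{*}\in P_{j}(S)\subset F_{j}(S)$, a contradiction. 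Finally I would take $D$ to be a round disc with $\overline{D}\subset\Int L_{i_{0}}$, which exists because $P_{i_{0}}(S)\neq\emptyset$. If instead $P_{j}(S)=\emptyset$ for some $j$, then by irreducibility and multiplicativity of the degree every generator of $S$ is a M\"obius transformation; in this case I would instead centre $D$ at the attracting fixed point of a loxodromic element of $H_{i_{0}}^{i_{0}}(S)$, which the loxodromic hypothesis forces into $F_{i_{0}}(S)$, and construct $(L_{j})_{j}$ from the forward orbit of a small disc around it, arguing as for expanding M\"obius semigroups.

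With $D$ so chosen, condition (i) is immediate. For all $\xi\in O$ and $n\in\N_{0}$ one has $\tilde{f}^{n}(\{\xi\}\times D) = \{\sigma^{n}\xi\}\times q_{\xi}^{(n)}(D)$, while forward $S$-invariance gives $q_{\xi}^{(n)}(D)\subset L_{t(e_{\xi,n})}$ and $i(e_{\sigma^{n}\xi,1}) = t(e_{\xi,n})$; hence the whole forward orbit of $\{\xi\}\times D$ lies in the closed set $\bigcup_{j\in V}(\{\xi : i(e_{\xi,1})=j\}\times L_{j})$. Since the sequence $(g_{\xi,n}\circ\cdots\circ g_{\xi,1})_{n}$ is a subfamily of $H_{i(e_{\xi,1})}(S)$, we have $J_{\xi}\subset J_{i(e_{\xi,1})}(S)$, so $J(\tilde{f})\subset\bigcup_{j\in V}(\{\xi : i(e_{\xi,1})=j\}\times J_{j}(S))$; as $L_{j}\cap J_{j}(S)=\emptyset$, the two sets are disjoint and the closure of the forward orbit lies in $F(\tilde{f})$.

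Condition (ii), $\diam(q_{\xi}^{(n)}(D))\to 0$, will be the main obstacle. Schwarz--Pick bounds the hyperbolic diameter of $q_{\xi}^{(n)}(D)$ inside its component of $F_{t(e_{\xi,n})}(S)$ by that of $D$, but this must be upgraded to spherical shrinking. I would argue by contradiction with a normality argument: if $\diam(q_{\xi}^{(n_{k})}(D))\geq\rho>0$ along a subsequence, then, enlarging $D$ to $D'$ with $\overline{D'}\subset\Int L_{i_{0}}\subset F_{\xi}$, the normal family $\{q_{\xi}^{(n)}|_{D'}\}_{n}$ has a locally uniform subsequential limit $\phi$ that is non-constant on $D$; passing also to a limit $\tau$ of $\sigma^{n_{k}}\xi$ and using that $q_{\sigma^{n_{k}}\xi}^{(m)}=q_{\tau}^{(m)}$ for $k$ large, $q_{\tau}^{(m)}\circ\phi$ is a subsequential limit of $q_{\xi}^{(n_{k}+m)}|_{D'}$ for every $m$, so the non-empty open set $\Omega:=\phi(D')\subset L_{i(e_{\tau,1})}$ has forward orbit $q_{\tau}^{(m)}(\Omega)$ staying in $\bigcup_{l}L_{l}$, hence compactly in the Fatou sets. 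A further recurrence and renormalisation argument then yields a fibre Fatou domain that is eventually periodic with return dynamics in some $H_{j}^{j}(S)$, avoids the postcritical set, and carries a non-constant limit of iterates; hyperbolicity rules out a parabolic return, the loxodromic hypothesis rules out an elliptic degree-one return, and a return of degree at least two would force a critical point into a postcritical-free Fatou domain --- in each case a contradiction. This is essentially the scheme behind ``hyperbolic implies expanding'' for rational semigroups, which I would carry out following \cite{MR1827119} and \cite{MR2153926}, completing the verification of (C2).
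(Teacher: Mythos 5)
Your construction of the neighbourhood $O=[\xi_{0,1}]$, the constant disc $D$, and the forward $S$-invariant compact hyperbolic neighbourhoods $L_j$ of $P_j(S)$ inside $F_j(S)$ matches the paper's setup (the paper's sets $K_{i,\omega}$ are exactly your $L_j$ split into the finitely many Fatou components meeting $P_i(S)$), and your verification of conditions (i) and (iii) is sound. The problem is condition (ii), which is where all the content of the proposition lives, and there your argument has a genuine gap. The contradiction scheme you sketch --- extract a non-constant locally uniform limit $\phi$ of $q_\xi^{(n_k)}$, pass to a limit $\tau$ of $\sigma^{n_k}\xi$, and then invoke ``a further recurrence and renormalisation argument'' to obtain \emph{eventually periodic} return dynamics lying in some $H_j^j(S)$ --- is precisely the step that does not come for free in the non-autonomous setting: the limit word $\tau$ is in general not periodic, so there is no single map in $H_j^j(S)$ whose parabolic/elliptic/critical-point behaviour you can rule out. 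You assert the three exclusions but never actually produce the periodic return map to which they would apply. The paper closes exactly this gap by a direct argument: it follows the orbit of a point $z\in D\cap P_{i_0}(S)$, uses the pigeonhole principle on the finitely many pairs (vertex, Fatou component meeting the postcritical set) to find genuine return words $g_{\xi,[a,b]}\in H_i^i(S)$ mapping $P_i(S)\cap W_\omega$ into itself, shows (using hyperbolicity when $\deg\ge 2$, and the loxodromic hypothesis when $\deg=1$) that each such return map has an attracting fixed point in $W_\omega\cap P_i(S)$ and is therefore a \emph{strict} Pick contraction on the compact set $K_{i,\omega}$, and then accumulates at least $k$ disjoint return blocks in any orbit segment of length $k(N+1)$ to force the hyperbolic (hence spherical) diameter of $g_{\xi,[1,n]}(D)$ below any $\eta$. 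If you want to keep your compactness framing you would still have to reproduce this pigeonhole-plus-Pick mechanism; without it the normal-families detour proves nothing.

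A secondary error: in your fallback for the case $P_j(S)=\emptyset$ (all generators M\"obius), you claim the loxodromic hypothesis forces the attracting fixed point of a loxodromic element of $H_{i_0}^{i_0}(S)$ into $F_{i_0}(S)$. That is false in general: $F_{i_0}(S)=F(H_{i_0}^{i_0}(S))$ is contained in, not equal to, the Fatou set of the single element, and for a non-elementary semigroup of M\"obius maps the attracting fixed points of elements may well lie in the Julia set of the semigroup. So a disc centred there need not satisfy condition (i).
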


\begin{proof}
Let  $i \in V$. 
We use $\mathcal{W}_{i} $ to denote the set of connected components of $F_i(S)$ which have a non-empty intersection with $P_i(S)$. Note that the hyperbolicity and irreducibility of $S$ imply $\mathcal{W}_{i} \neq \emptyset$. The compactness of $P_{i}(S)$ in $\widehat\C$ guarantees that $\# \mathcal{W}_{i} < \infty$. We use  $\Omega_{i}$ as an index set for the elements of $\mathcal{W}_{i}$ and we write  $\mathcal{W}_{i} = \{W_{\omega} : \omega \in \Omega_{i}\}$.
For each $\omega \in \Omega_{i}$, we may take some compact subset $K_{i, \omega}$ of $W_{\omega}$ given by
$K_{i, \omega} := \{\zeta \in W_{\omega} : d_{\omega}(\zeta, P_{i}(S) \cap W_{\omega}) \leq 1\}$,
where $d_{\omega}$ denotes the hyperbolic distance on $W_{\omega}$ and $d_{\omega}(\zeta, P_{i}(S) \cap W_{\omega}) := \inf\{d_{\omega}(\zeta, p): p \in P_{i}(S) \cap W_{\omega}\}$ for each $\zeta \in W_{\omega}$. 
We shall write $\mathcal{K} :=\{ K_{i, \omega} :  i \in V, \omega \in \Omega_{i}\}$.

Here, for each $\xi:=(\xi_{n})_{n \in \N} \in X(S)$, take a family of spherical open discs $(D_{\rho})_{\rho \in [\xi_{1}]}$ such that for each $\rho, \rho^{\prime} \in [\xi_{1}]$ we have $D_{\rho} = D_{\rho^{\prime}}$, $D_{\rho} \cap P_{i(e_{\xi,1})}(S) \neq \emptyset$ and $D_{\rho} \subset K_{i(e_{\xi,1}),\omega}$ for some $\omega \in \Omega_{i(e_{\xi,1})}$.
It is immediately clear from definitions that this finite family satisfies \ref{iCxAGUDljH} of Condition  (C2).  
Moreover, {noting $D_\rho \subset K_{i(e_{\xi,1}),\omega} \subset W_{\omega}$}, we see that $(D_{\rho})_{\rho \in [\xi_{1}]}$ satisfies \ref{Xckow2qKrc} of Condition  (C2) for any $\xi \in X(S)$. 
Now, \ref{GOJn2gojWL} remains to be verified for $(D_{\rho})_{\rho \in [\xi_{1}]}$. For simplicity, we shall write $N := \# V \times \max_{j \in V}\# W_{j}$ and $g_{\xi,[a, b]} := g_{\xi, b} \circ \cdots \circ g_{\xi, a}$ for every $\xi \in X(S)$ and $a, b \in \N$ with $a < b$.

Now, we fix $\eta > 0$, $\xi \in X(S)$, a member $D_{\rho}$ of $(D_{\rho})_{\rho \in [\xi_{1}]}$ for the remainder of the proof.
Take arbitrary $z \in D_{\rho} \cap P_{i(e_{\xi,1})}(S)$ and $k^{\prime} \in \N$. 
Then, the $S$-forward invariance of $P(S)$, $\# \mathcal{W}_{i} < \infty$ and $\# V < \infty$ imply that there exists $(i,\omega, (a, b)) \in V \times \Omega_{i} \times \N^2$ such that 
\begin{equation}\label{fWb770CcN0}
k^{\prime} \leq b, a < b \text{ and  }g_{\xi, [1,b]}(z), g_{\xi, [1,a]}(z) \in P_{i}(S) \cap W_{\omega}.
\end{equation}

We shall verify the next claim under the setting above.

\begin{claim*}
For each $k^{\prime} \in \N$ and $(i,\omega, (a, b)) \in V \times \Omega_{i} \times \N^2$ satisfying (\ref{fWb770CcN0}), we have the following.
\begin{enumerate}[label=(\roman*)]
\item \label{7yrcYq1uP2} $g_{\xi, [a, b]}(W_{\omega}\cap P_{i}(S)) \subset W_{\omega} \cap P_{i}(S)$.
\item \label{cbGMu7CyrB} there exists $\lambda (i, K_{i,\omega}, g_{\xi, [a,b]}) \in (0,1)$ such that 
for all  $z_{1}, z_{2} \in K_{i,\omega}$ we have
\begin{equation}\label{297jHDAKc}
d_{\omega}(g_{\xi, [a,b]} (z_{1}), g_{\xi, [a,b]} (z_{2})) 
\leq \lambda (i, K_{i,\omega}, g_{\xi, [a,b]}) d_{\omega}(z_{1}, z_{2}).
\end{equation}
\end{enumerate}
\end{claim*}
We prove this claim.
As \ref{7yrcYq1uP2} follows immediately from definitions, we shall prove \ref{cbGMu7CyrB}.
Suppose that the map $g_{\xi, [a, b]}$ satisfies $\deg(g_{\xi, [a, b]}) \geq 2$.
Remark that the hyperbolicity of $S$ and definitions imply that $P(g_{\xi, [a,b]}) \subset P_{i}(S) \subset F_{i}(S) \subset F(g_{\xi, [a,b]})$.
Noting \ref{7yrcYq1uP2}, one may prove that $W_{\omega}\cap P_{i}(S)$ contains an attracting fixed point of $g_{\xi, [a,b]}$.
Hence,  by the Pick theorem \cite{milnor2011dynamics}, there exists $\lambda (i, K_{i,\omega}, g_{\xi, [a,b]}) \in (0,1)$ such that 
for all  $z_{1}, z_{2} \in K_{i,\omega}$, we have 
\begin{equation}\label{297jHDAKc}
d_{\omega}(g_{\xi, [a,b]} (z_{1}), g_{\xi, [a,b]} (z_{2})) 
\leq \lambda (i, K_{i,\omega}, g_{\xi, [a,b]}) d_{\omega}(z_{1}, z_{2}).
\end{equation}
Next, we assume that $\deg(g_{\xi, [a,b]}) = 1$.
Since, $g_{\xi, [a,b]}$ is loxodromic by the assumption,  $W_{\omega} \cap P_{i}(S)$ contains an attracting fixed point of $g_{\xi, [a,b]}$.
Thus, the Pick theorem shows that there exists $\lambda (i, K_{i,\omega}, g_{\xi, [a,b]}) \in (0,1)$ such that for all  $z_{1}, z_{2} \in K_{i,\omega}$ the inequality in (\ref{297jHDAKc}) holds.
Therefore, the conclusion of our claim follows.

For each natural number $l \leq  N$, we set
$$
\mathcal{H}_{i, \xi}(l) := \{g_{\xi, [a,b]} \in H_{i}^{i}(S):  \exists (\omega, (a, b)) \in \Omega_{i} \times \N^2 \text{ s.t. } b - a = l, g_{\xi, [1,b]}(z), g_{\xi, [1,a]}(z) \in P_{i}(S) \cap W_{\omega} \}.
$$
Since $V$ and $\Gamma$ are both finite sets , $\mathcal{H}_{i, \xi}(l)$ is a finite set and thus
$$
\lambda_{\xi} := \max\left\{ \lambda (j, K_{j,\omega}, h) : i \in V, \omega \in \Omega_{j}, K_{j,\omega} \in \mathcal{K}, h \in \bigcup_{l = 1}^{N} \mathcal{H}_{j, \xi}(l) \neq \emptyset \right\}< 1.
$$
Also, letting $\diam_{\omega}$ denote the diameter in the hyperbolic distance $d_{\omega}$ for $\omega \in \Omega$, we have
$$
c := \sup \left\{\frac{\diam_{\omega}(A)}{\diam_{\widehat\C}(A)} :  j \in V, \omega \in \Omega_{j}, A \subset K_{j,\omega} \text{ is compact in } W_{\omega} \right\} < \infty,
$$
where we used the fact that the ratio of conformal metrics is bounded away from 0 and infinity when restricted on a compact set.

Let $k\in \N$ be such that $c\lambda_{\xi}^{k} < \eta$.
Then, it follows from the $S$-forward invariance of $P(S)$, $\# \mathcal{W}_{i} < \infty$ and $\# V < \infty$  that for all $n \geq k(N + 1)$, there exist natural numbers $\alpha_{1}, \alpha_{2}, \ldots,  \alpha_{k}, \beta_{1}, \beta_{2}, \ldots,  \beta_{k}$ such that $1 \leq \alpha_{1} < \beta_{1} < \alpha_{2} < \beta_{2} < \cdots < \alpha_{k} < \beta_{k} \leq n$ satisfying the property that for each $l \in \{1, \ldots, k\}$ there exist $i \in V$ and $\omega \in \Omega_{i}$ such that $g_{\xi, [1,\beta_{l}]}(z)$, $g_{\xi, [1,\alpha_{l}]}(z) \in P_{i}(S) \cap W_{\omega}$.
Thus, we obtain
\begin{equation}\label{4EsQQKSSKc}
\diam_{\widehat\C}\left(g_{\xi, [1,n]}(D_{\rho})\right)
\leq c  \lambda_{\xi}^{k} < \eta.
\end{equation}
Combining this with \ref{oqUbNFYo4a} of Proposition \ref{UHyK6FOUuf} and the fact that $\xi \in X(S)$ and $z \in P_{i(e_{\xi,1})}(S)$ are arbitrary, we may conclude that (2) of Condition (C2) holds by taking $(D_{\rho})_{\rho \in [\xi_{1}]}$ for each $\xi \in X(S)$. 
Consequently, the desired statement follows.
\end{proof}

\begin{thm}\label{A7J66dnGYg}
Let $S$ be a hyperbolic, non-elementary, irreducible rational GDMS.
 For each $i \in V$ and $h \in H_{i}^{i}(S)$, assume that $h$ is loxodromic when $\deg(h)= 1$.
 Then, the skew product map $\tilde{f}$ of $S$ is expanding along fibers.
\end{thm}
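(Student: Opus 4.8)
The plan is to deduce Theorem \ref{A7J66dnGYg} directly from Proposition \ref{Oqeiwu19LMc} together with the external result \cite[Theorem 2.17]{MR1827119}. First I would recall the precise statement of Theorem 2.17 of \cite{MR1827119}: under suitable non-degeneracy hypotheses on a rational skew product (roughly, that the base map $p$ is topologically exact on the relevant invariant set, that the system has no attracting cycle in the fibre Julia set, and that it satisfies Condition (C2)), the skew product is expanding along fibres on $J(\tilde f)$. So the bulk of the argument is the verification that our $\tilde f$, viewed as a rational skew product in the sense of Definition \ref{T4fhvGEuO4} with $X = X(S)$ and $p = \sigma$, meets \emph{all} the hypotheses of that theorem, not only Condition (C2).

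The key steps, in order, would be: (1) Observe that the skew product map $\tilde f$ of a rational GDMS is literally a rational skew product in the sense of Definition \ref{T4fhvGEuO4}, with $X = X(S)$ compact, $p = \sigma$ continuous, and $q_\xi = g_{\xi,1}$ a rational map of degree at least one; moreover the notions of fibre Julia set and skew product Julia set coincide with those in the excerpt. (2) Check the ``dynamical'' hypotheses of \cite[Theorem 2.17]{MR1827119}: since $S$ is non-elementary and irreducible, Proposition \ref{UHyK6FOUuf}\ref{XfUMBmTKD2} gives $\#J_i(S)\ge 3$, hence $J_i(S)$ is infinite and equals $J(H_i^i(S))$, so each fibre Julia set is nontrivial; and one must verify that $\tilde f$ has no superattracting or attracting periodic point in $J(\tilde f)$ — this follows because hyperbolicity forces all fibre Fatou components containing post-critical points to be attracting/parabolic basins disjoint from the Julia set, and the loxodromic hypothesis on degree-one return maps rules out the Möbius exception exactly as in the proof of Corollary \ref{ghraCYWJEm}. (3) Invoke Proposition \ref{Oqeiwu19LMc} to conclude Condition (C2) holds. (4) Apply \cite[Theorem 2.17]{MR1827119} to conclude that $\tilde f$ is expanding along fibres. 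One should also note that since $J(S)\neq\emptyset$ (as $\#J_i(S)\ge 3$), the definition of ``expanding'' in the excerpt is satisfied, not merely ``expanding along fibres''.

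I expect the main obstacle to be purely bookkeeping: matching the hypotheses of \cite[Theorem 2.17]{MR1827119} — which is stated for a general rational skew product over an abstract base — with the specific structure available for a rational GDMS. In particular, \cite{MR1827119} may phrase its hypotheses in terms of the absence of certain ``exceptional'' behaviour or in terms of a hyperbolicity-type condition on the fibred post-critical set, and one must translate $P_i(S)\subset F_i(S)$ and the loxodromic condition into exactly that language. A secondary subtlety is ensuring that the map $x\mapsto D_x$ produced in Proposition \ref{Oqeiwu19LMc} is genuinely defined and continuous on a neighbourhood $O$ of each $x_0\in X(S)$ — in the GDMS setting the natural $O$ is a cylinder $[\xi_1]$, which is clopen, so continuity is automatic, but this should be remarked. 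Once the dictionary is set up, the theorem follows immediately, so the proof itself can be short:

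\begin{proof}
By Definition \ref{T4fhvGEuO4}, the skew product map $\tilde f$ associated with $S$ is a rational skew product over the compact base $X(S)$ with base map $\sigma$, and the fibre and skew product Julia sets in the sense of that definition coincide with $J_\xi$ and $J(\tilde f)$. Since $S$ is non-elementary and irreducible, Proposition \ref{UHyK6FOUuf}\ref{XfUMBmTKD2} gives $J_i(S)=J(H_i^i(S))$ with $\#J_i(S)\ge 3$, so $J_i(S)$ is infinite; in particular $J(S)\neq\emptyset$. Arguing as in the proof of Corollary \ref{ghraCYWJEm}, the hyperbolicity of $S$ together with the hypothesis that every $h\in H_i^i(S)$ is loxodromic whenever $\deg(h)=1$ implies that $\tilde f$ has no attracting or superattracting periodic point in $J(\tilde f)$, so $\tilde f$ satisfies the standing hypotheses of \cite[Theorem 2.17]{MR1827119}. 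By Proposition \ref{Oqeiwu19LMc}, $\tilde f$ also satisfies Condition (C2). Therefore \cite[Theorem 2.17]{MR1827119} applies and shows that $\tilde f$ is expanding along fibres. Since $J(S)\neq\emptyset$, this means precisely that $S$ is expanding, completing the proof.
\end{proof}
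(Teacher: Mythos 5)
Your proposal is correct and takes essentially the same route as the paper: the paper's proof of Theorem \ref{A7J66dnGYg} consists precisely of citing Proposition \ref{Oqeiwu19LMc} for Condition (C2) and then invoking Theorem 2.17 of \cite{MR1827119}. Your additional bookkeeping (verifying the remaining standing hypotheses of that theorem and noting $J(S)\neq\emptyset$) is sound and, if anything, more careful than the paper's one-line argument.
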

\begin{proof}
Since $\tilde{f}$ satisfies Condition (C2), Theorem 2.17 of \cite{MR1827119} verifies the statement.
\end{proof}

\subsection{Bowen's formula}\label{qdXaxWWPnV}
The last subsection is dedicated to presenting the central result of Bowen's formula for a rational GDMS.
We shall start by proving the existence of a conformal measure for $(J(\tilde{f}), \tilde{f}|_{J(\tilde{f})})$ and deriving some properties.
  
We shall write $\tilde{f}:= \tilde{f}|_{J(\tilde{f})}$ to avoid redundancy when the identification is clear from the context.
Denote by $C(J(\tilde{f}))$ the space of all real-valued, Borel measurable functions on $J(\tilde{f})$ equipped with the supremum norm written as $\left\| \cdot \right\|_{J(\tilde{f})}$.
Let $\varphi: J(\tilde{f}) \to \R$ be a continuous function and $P(\tilde{f}, \varphi)$ be the topological pressure of $(\tilde{f}, \varphi)$.
Then, the variational principle holds: $P(\tilde{f}, \varphi) = \sup\{h_\mu(\tilde{f})+\int\varphi d \mu : \mu \in M_{\tilde{f}}(J(\tilde{f}))\}$, where $M_{\tilde{f}}(J(\tilde{f}))$ is the set of all $\tilde{f}$-invariant Borel probability measures on $J(\tilde{f})$ and  $h_\mu(\tilde{f})$ is the measure-theoretic entropy of $\tilde{f}$ with respect to $\mu \in M_{\tilde{f}}(J(\tilde{f}))$; see \cite{MR648108}.
An element $\mu \in M_{\tilde{f}}(J(\tilde{f}))$ is called an equilibrium state of $(\tilde{f}, \varphi)$ if $P(\tilde{f}, \varphi) = h_\mu(\tilde{f})+\int\varphi d \mu$.

When the skew product Julia set $J(\tilde{f})$ contains no critical point of $\tilde{f}$, we define a function $\tilde\varphi \colon J(\tilde{f}) \to \R$ given by $\tilde\varphi(\tilde{z}):= -\log \|g_{\xi_{1}}^{\prime}(z)\|$ for each $\tilde{z} := (\xi, z) \in J(\tilde{f})$.
Let $t \in \R$.
Then, a Perron-Frobenius operator $\mathcal{L}_{t\tilde\varphi} \colon C(J(\tilde{f})) \to C(J(\tilde{f}))$ is defined by
$$
\mathcal{L}_{t\tilde\varphi} \psi(\tilde{z}) := \sum_{\tilde{y} \in \tilde{f}^{-1}(\{\tilde{z}\})} \exp{(t\tilde\varphi(\tilde{y}))}\psi(\tilde{y})
$$
for each $\psi \in C(J(\tilde{f}))$ and $\tilde{z} \in J(\tilde{f})$.
Let $\mathcal{L}_{t\tilde{\varphi}}^*$ denote the dual operator of $\mathcal{L}_{t\tilde{\varphi}}$ and $\tilde{\mathcal{F}}$ denote the Borel $\sigma$-algebra on $J(\tilde{f})$.
Also, a function $\mathcal{P}  \colon \R \to \R\cup \{\infty\}$ defined by $\mathcal{P} (t) :=P(\tilde{f}, t\tilde\varphi)$ is called the geometric pressure function of $(\tilde{f}, \varphi)$.
All the statements of this subsection shall be given in this setting.
\begin{lem}\label{qQrOdN2Nmt}
Suppose that a rational GDMS $S$ is expanding.
Then, the following hold.
\begin{enumerate}[label=(\roman*)]
\item \label{t7xqn180I5} For each $t \in \R$, there exists a unique probability measure $\tilde{\nu}_{t}$ on $(J(\tilde{f}), \tilde{\mathcal{F}})$ such that $\mathcal{L}_{t\tilde\varphi}^* \tilde{\nu}_{t} =  \lambda\tilde{\nu}_{t}$, where $\lambda := \mathcal{L}_{t\tilde\varphi}^*(\tilde{\nu}_{t})(1)$.
\item \label{nJEgDfnVpr} There exists $h \in C(J(\tilde{f}))$ such that $\mathcal{L}_{t\tilde\varphi}h = \lambda h$, $\tilde\nu_{t}(h)=1$, $h>0$  and 
$$
\lim_{n \to \infty}\left\|\lambda^{-n} \mathcal{L}_{t\tilde\varphi}^n \psi-\tilde{\nu}_{t}(\psi) h\right\|_{J(\tilde{f})} = 0
$$
for each $\psi \in C(\tilde{J}(\tilde{f}))$.
\item \label{Lg4K3pcyFt} The measure $h \tilde{\nu}_{t}$ is $\tilde{f}$-invariant, exact (hence ergodic) and an equilibrium state for $(\tilde{f}, t\tilde{\varphi})$.
\item  \label{cmu6vrJlgl}  There exists a unique zero $\delta$ of $t \mapsto \mathcal{P}(t)$ such that $\delta = -h_{h \tilde{\nu}_{\delta}}(\tilde{f})/\int \tilde{\varphi} h d \tilde{\nu}_{\delta}$.
\end{enumerate}
\end{lem}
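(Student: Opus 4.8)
The plan is to reduce all four statements to the classical Ruelle--Perron--Frobenius (thermodynamic) formalism for open, topologically exact, distance-expanding maps with a H\"older continuous potential (cf.\ \cite{MR648108}), applied to $(J(\tilde f),\tilde f|_{J(\tilde f)})$ and the potential $t\tilde\varphi$. First I would record the structural facts that make this applicable. The space $X(S)$ is a subshift of finite type over the \emph{finite} alphabet $\{(g,e):e\in E,\ g\in\Gamma_e\}$, hence compact, and $\sigma$ is the usual distance-expanding shift; consequently $J(\tilde f)\subset X(S)\times\widehat\C$ is compact and, by \ref{sc15xYijnn} of Proposition \ref{UHyK6FOUuf}, $\tilde f$-invariant. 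Since $S$ is expanding, $\inf_{J(\tilde f)}\|(\tilde f^{n})'\|\geq C\lambda^{n}$ with $\lambda>1$ forces $g_{\xi,1}'(z)\neq 0$ at every $(\xi,z)\in J(\tilde f)$, so by \ref{homeo of f tilde} of Proposition \ref{UHyK6FOUuf} the map $\tilde f$ is a local homeomorphism on $J(\tilde f)$, in particular open, and it is finite-to-one with a uniformly bounded number of preimages, so $\mathcal L_{t\tilde\varphi}$ is a bounded operator on $C(J(\tilde f))$. Choosing $m\in\N$ with $C\lambda^{m}$ large and using Koebe-type distortion bounds on the sphere together with the expansion of $\sigma^{m}$, one checks that $\tilde f^{m}$ is distance-expanding on $J(\tilde f)$ for the metric $\tilde d$; moreover $\tilde f|_{J(\tilde f)}$ is topologically exact by Corollary \ref{ghraCYWJEm} (recall that, as throughout this section, $S$ is also non-elementary, irreducible and aperiodic). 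Finally $\tilde\varphi(\xi,z)=-\log\|g_{\xi,1}'(z)\|$ depends on $\xi$ only through $\xi_{1}$ and is Lipschitz in $z$ away from the zeros of the derivative, which do not meet $J(\tilde f)$; hence $t\tilde\varphi$ is Lipschitz on $J(\tilde f)$, and passing to the iterate $\tilde f^{m}$ replaces it by the Birkhoff sum $\sum_{j=0}^{m-1}t\tilde\varphi\circ\tilde f^{j}$, again Lipschitz. The formalism for $\tilde f$ then follows from that for the expanding map $\tilde f^{m}$.

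For \ref{t7xqn180I5}: the normalized dual $\nu\mapsto\mathcal L_{t\tilde\varphi}^{*}\nu/\mathcal L_{t\tilde\varphi}^{*}\nu(1)$ maps the weak-$*$ compact convex set of Borel probability measures on $J(\tilde f)$ into itself, so by the Schauder--Tychonoff fixed point theorem there is $\tilde\nu_{t}$ with $\mathcal L_{t\tilde\varphi}^{*}\tilde\nu_{t}=\lambda\tilde\nu_{t}$ and $\lambda=\mathcal L_{t\tilde\varphi}^{*}\tilde\nu_{t}(1)>0$; uniqueness follows because any such eigenmeasure is $(\tilde f,t\tilde\varphi)$-conformal with the same factor $\lambda$, and topological exactness forces two such conformal probability measures to coincide. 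For \ref{nJEgDfnVpr}: the Ruelle--Perron--Frobenius theorem (via the spectral gap of $\mathcal L_{t\tilde\varphi}$ on the space of Lipschitz functions, or the classical invariant-cone argument) produces a strictly positive continuous eigenfunction $h$ with $\mathcal L_{t\tilde\varphi}h=\lambda h$; after normalizing $\tilde\nu_{t}(h)=1$, the same theorem gives $\lambda^{-n}\mathcal L_{t\tilde\varphi}^{n}\psi\to\tilde\nu_{t}(\psi)h$ uniformly, first for Lipschitz $\psi$ and then, by density and the uniform boundedness of $\{\lambda^{-n}\mathcal L_{t\tilde\varphi}^{n}\}_{n}$, for all $\psi\in C(J(\tilde f))$.

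For \ref{Lg4K3pcyFt}: set $\tilde\mu_{t}:=h\tilde\nu_{t}$. Since $\psi\circ\tilde f$ is constant on fibres of $\tilde f$, one has $\mathcal L_{t\tilde\varphi}((\psi\circ\tilde f)h)=\psi\,\mathcal L_{t\tilde\varphi}h=\lambda\psi h$, whence $\int\psi\circ\tilde f\,d\tilde\mu_{t}=\int\psi\,d\tilde\mu_{t}$, i.e.\ $\tilde\mu_{t}$ is $\tilde f$-invariant; the convergence in \ref{nJEgDfnVpr} is equivalent to exactness of $\tilde\mu_{t}$, which gives ergodicity. Writing $P(\tilde f,t\tilde\varphi)=\log\lambda$ (the logarithm of the leading eigenvalue equals the pressure) and noting that $\tilde\nu_{t}$ has $\tilde f$-Jacobian $\lambda e^{-t\tilde\varphi}$, the measure $\tilde\mu_{t}$ has $\tilde f$-Jacobian $\lambda(h\circ\tilde f)e^{-t\tilde\varphi}/h$, so Rokhlin's formula yields $h_{\tilde\mu_{t}}(\tilde f)=\log\lambda-t\int\tilde\varphi\,d\tilde\mu_{t}$ (the $\log(h\circ\tilde f)$ and $\log h$ terms cancel by invariance); thus $h_{\tilde\mu_{t}}(\tilde f)+t\int\tilde\varphi\,d\tilde\mu_{t}=P(\tilde f,t\tilde\varphi)$ and $\tilde\mu_{t}$ is an equilibrium state. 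For \ref{cmu6vrJlgl}: the chain rule gives $\log\|(\tilde f^{n})'(\tilde z)\|=-\sum_{j=0}^{n-1}\tilde\varphi(\tilde f^{j}\tilde z)$, so expandingness implies $\int\tilde\varphi\,d\mu\leq-\log\lambda<0$ for every $\mu\in M_{\tilde f}(J(\tilde f))$; hence by the variational principle $\mathcal P(t)\leq h_{\mathrm{top}}(\tilde f)-t\log\lambda\to-\infty$ as $t\to+\infty$, while $\mathcal P(t)\geq h_{\mu}(\tilde f)+t\int\tilde\varphi\,d\mu\to+\infty$ as $t\to-\infty$ for any fixed $\mu$, and $\mathcal P(0)=h_{\mathrm{top}}(\tilde f)\geq 0$. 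Since $\mathcal P$ is finite and continuous, it has a zero $\delta\geq 0$; since $t\mapsto\mathcal L_{t\tilde\varphi}$ is analytic with simple isolated leading eigenvalue, $\mathcal P(t)=\log\lambda(t)$ is real-analytic with $\mathcal P'(t)=\int\tilde\varphi\,d\tilde\mu_{t}<0$, so $\mathcal P$ is strictly decreasing and the zero is unique. Finally $0=\mathcal P(\delta)=h_{\tilde\mu_{\delta}}(\tilde f)+\delta\int\tilde\varphi\,d\tilde\mu_{\delta}$ gives $\delta=-h_{h\tilde\nu_{\delta}}(\tilde f)/\int\tilde\varphi\,h\,d\tilde\nu_{\delta}$.

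The step I expect to be the main obstacle is the reduction carried out in the first paragraph: one must verify with care that $\tilde f$, after passing to a suitable iterate and if necessary adapting the metric $\tilde d$, is a genuine open distance-expanding map on $J(\tilde f)$ --- reconciling the combinatorial expansion of the base shift with the analytic fiber expansion through uniform Koebe distortion estimates --- and that the geometric potential has uniformly bounded distortion, so that the generic Ruelle--Perron--Frobenius machinery applies verbatim. Once this bookkeeping is in place, parts \ref{t7xqn180I5}--\ref{cmu6vrJlgl} constitute the standard package.
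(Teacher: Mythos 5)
Your proposal is correct and follows essentially the same route as the paper: both reduce the statement to the classical Ruelle--Perron--Frobenius/Walters formalism applied to a sufficiently high iterate of $\tilde f$ that is distance-expanding and topologically exact on $J(\tilde f)$ (the paper invokes Conditions I and II, Lemma 2 and Theorem 8 of the Walters reference, which is exactly the package you reconstruct, with exactness supplied by Corollary \ref{ghraCYWJEm}). The only cosmetic difference is in part \ref{cmu6vrJlgl}, where the paper obtains finiteness of the pressure from the entropy bound $h_{\mathrm{top}}(\tilde f|_{J(\tilde f)})\le\log\bigl(\sum_{g\in\Gamma}\deg(g)\bigr)$ via Theorem 6.1 of \cite{MR1767945} rather than from expansiveness, and then, as you do, uses expandingness to force $\int\tilde\varphi\,d\mu\le-\log\lambda<0$ and hence the strict monotonicity and limits of $\mathcal P$.
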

\begin{proof}
Noting the topological exactness of $\tilde{f}$ proved in Corollary \ref{ghraCYWJEm} and its expandingness, one may prove that a sufficiently high iteration $\tilde{f}^{s}$ satisfies Condition I and II of \cite{MR648108}.
Thus,  \ref{t7xqn180I5} follows from the application to $(\tilde{f}^{s}, \tilde\varphi)$ of Lemma 2 in \cite{MR648108}, and \ref{nJEgDfnVpr} also follows from Theorem 8 in \cite{MR648108}. One may justify \ref{Lg4K3pcyFt} by the argument on page 140 of \cite{MR648108}.

To prove \ref{cmu6vrJlgl}, consider a rational semigroup generated by all the elements of  $\Gamma$, and its skew product Julia set $J_{\Gamma}(\tilde{f})$. 
Since $h_{\text {top}}(\tilde{f}|_{J(\tilde{f})}) \leq h_{\text{top}}(\tilde{f}|_{J_{\Gamma}(\tilde{f})})$, it follows from Theorem 6.1 of \cite{MR1767945} that $h_{\text {top}}(\tilde{f}|_{J(\tilde{f})}) \leq h_{\text{top}}(\tilde{f}|_{J_{\Gamma}(\tilde{f})}) \leq \log (\sum_{g \in \Gamma} \deg(g))$, where $h_{\text {top}}(\tilde{f}|_{J(\tilde{f})})$ and $h_{\text{top}}(\tilde{f}|_{J_{\Gamma}(\tilde{f})})$ refer to the topological entropies of $\tilde{f}|_{J(\tilde{f})}$ and $\tilde{f}|_{J_{\Gamma}(\tilde{f})}$.
Combining this fact with the expandingness of $\tilde{f}$ verifies the function $\mathcal{P} $ is strictly decreasing on $\R$, and $\lim_{t \to \infty}\mathcal{P} (t) = -\infty$ and $\lim_{t \to -\infty}\mathcal{P} (t) = \infty$ hold. 
The last identity may be justified by the definition of the equilibrium state $h \tilde{\nu}$ of $(\tilde{f}, \delta\tilde\varphi)$.
\end{proof}

Let $\mathscr{L}^1(\tilde{\nu}) := \{f: J(\tilde{f}) \to \R : \int_{J(\tilde{f})} |f| d \tilde{\nu} < \infty \}$  and $L^1(\tilde{\nu}) := \mathscr{L}^1(\tilde{\nu})/\sim$ equipped with $L^1$ norm, where the relation $f \sim g$ means $f(\tilde{z})=g(\tilde{z})$ for $\tilde{\nu}$-a.e. $\tilde{z} \in J(\tilde{f})$.

\begin{lem}\label{tg3o75180I5}
For an expanding rational GDMS $S$, consider the probability space $(J(\tilde{f}), \tilde{\mathcal{F}}, \tilde{\nu}_{\delta})$.
Let $k \in \N$.
Then, each $A \in \tilde{\mathcal{F}}$ such that $\tilde{f}^k|_{A}: A \to \tilde{f}^k(A)$ is injective, we have 
$$
\tilde{\nu}_{\delta}(\tilde{f}^k(A))=\int_A\|(\tilde{f}^k)^{\prime}\|^\delta d \tilde{\nu}_{\delta}.
$$
\end{lem}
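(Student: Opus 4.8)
The plan is to use the defining eigenmeasure equation $\mathcal{L}_{\delta\tilde\varphi}^*\tilde\nu_\delta = \lambda\tilde\nu_\delta$, where $\lambda = 1$ because $\delta$ is the zero of the pressure function $\mathcal P$ (indeed $\mathcal P(\delta)=P(\tilde f,\delta\tilde\varphi)=\log\lambda=0$, so $\lambda=1$). First I would record this observation, since it is what makes the change-of-variables formula come out without a multiplicative constant. It then suffices to treat the case $k=1$ and iterate: if the formula holds for $k=1$, then for $A$ on which $\tilde f^k$ is injective one writes $\tilde f^k = \tilde f\circ\tilde f^{k-1}$, notes that $\tilde f^{k-1}|_A$ and $\tilde f|_{\tilde f^{k-1}(A)}$ are each injective, applies the $k=1$ case twice together with the chain rule $\|(\tilde f^k)'(\tilde z)\| = \|(\tilde f)'(\tilde f^{k-1}\tilde z)\|\cdot\|(\tilde f^{k-1})'(\tilde z)\|$, and changes variables back. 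So the core is the $k=1$ statement: for $A\in\tilde{\mathcal F}$ with $\tilde f|_A$ injective, $\tilde\nu_\delta(\tilde f(A)) = \int_A \|\tilde f'\|^\delta\,d\tilde\nu_\delta$.

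For $k=1$, the standard route is: since $\tilde f$ is expanding along fibers, $J(\tilde f)$ contains no critical points of $\tilde f$, so by Proposition \ref{UHyK6FOUuf}\ref{homeo of f tilde} every point of $J(\tilde f)$ has a neighborhood on which $\tilde f$ is a homeomorphism onto its image; by compactness there is $\varepsilon_0>0$ so that $\tilde f$ is injective on every set of diameter $<\varepsilon_0$. I would first prove the formula for such ``small'' Borel sets $B$ (diameter $<\varepsilon_0$), then extend to general $A$ with $\tilde f|_A$ injective by partitioning $A$ into countably many small Borel pieces $B_m$, applying the formula to each $\tilde f(B_m)$ (which are pairwise disjoint because $\tilde f|_A$ is injective), and summing via monotone convergence. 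For a small set $B$, pick an inverse branch: let $V$ be a neighborhood of $\overline B$ on which $\tilde f$ is a homeomorphism and let $\phi = (\tilde f|_V)^{-1}$. Then for $\psi = \mathbf 1_{\tilde f(B)}$ one has, for $\tilde z\in \tilde f(B)$,
\begin{equation*}
\mathcal{L}_{\delta\tilde\varphi}\bigl(\mathbf 1_{B}\cdot\|\tilde f'\|^{-\delta}\bigr)(\tilde z)
= \sum_{\tilde y\in\tilde f^{-1}(\tilde z)} \|\tilde f'(\tilde y)\|^{-\delta}\,\mathbf 1_{B}(\tilde y)\,\|\tilde f'(\tilde y)\|^{\delta}
= \mathbf 1_{B}(\phi(\tilde z)) = \mathbf 1_{\tilde f(B)}(\tilde z),
\end{equation*}
where in the sum only the preimage $\phi(\tilde z)$ can lie in $B$. (One must also check that $\phi(\tilde z)\in B$ iff $\tilde z\in\tilde f(B)$, which holds because $\tilde f|_B$ is injective and $\phi$ is its inverse.) Integrating against $\tilde\nu_\delta$ and using $\mathcal L_{\delta\tilde\varphi}^*\tilde\nu_\delta=\tilde\nu_\delta$ gives
\begin{equation*}
\tilde\nu_\delta(\tilde f(B)) = \int \mathcal L_{\delta\tilde\varphi}\bigl(\mathbf 1_B\|\tilde f'\|^{-\delta}\bigr)\,d\tilde\nu_\delta = \int \mathbf 1_B\,\|\tilde f'\|^{-\delta}\,d\tilde\nu_\delta,
\end{equation*}
which is not yet the claimed identity. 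The fix is to apply this with the potential-weighted indicator the other way: run the same computation with $\psi = \mathbf 1_{B}\cdot\|\tilde f'\circ\phi\|^{\delta}$-type bookkeeping — more cleanly, apply the displayed identity with $B$ replaced by an arbitrary small set and the integrand $\mathbf 1_B\|\tilde f'\|^{\delta}$ pulled through, i.e. compute $\mathcal L_{\delta\tilde\varphi}(\mathbf 1_B)(\tilde z) = \sum_{\tilde y}\|\tilde f'(\tilde y)\|^{-\delta}\mathbf 1_B(\tilde y) = \|\tilde f'(\phi(\tilde z))\|^{-\delta}\mathbf 1_{\tilde f(B)}(\tilde z)$, hence $\tilde\nu_\delta(B) = \int \mathcal L_{\delta\tilde\varphi}(\mathbf 1_B)\,d\tilde\nu_\delta = \int_{\tilde f(B)}\|\tilde f'(\phi(\tilde z))\|^{-\delta}\,d\tilde\nu_\delta = \int_{\tilde f(B)}\|(\tilde f^{-1})'\|^{\delta}\,d\tilde\nu_\delta$; substituting $\tilde z = \tilde f(\tilde w)$ and $\|(\tilde f^{-1})'(\tilde f\tilde w)\| = \|\tilde f'(\tilde w)\|^{-1}$ rearranges to $\tilde\nu_\delta(\tilde f(B)) = \int_B \|\tilde f'\|^{\delta}\,d\tilde\nu_\delta$ once one knows the measure $\tilde\nu_\delta$ has no atoms on the critical-value locus — but here there are no critical points at all, so this is automatic and the substitution is just the $k=1$ identity applied to $\tilde f(B)$ in place of $B$ with the reciprocal derivative. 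I would present this cleanly as a single lemma about inverse branches rather than the slightly circular phrasing above.

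I expect the main obstacle to be the bookkeeping in the measurable change of variables: making rigorous that a general $A$ with $\tilde f|_A$ injective decomposes into countably many Borel pieces on which $\tilde f$ is a homeomorphism, that the images stay pairwise disjoint, and that $\|(\tilde f^k)'\|$ is Borel measurable and bounded away from $0$ and $\infty$ on $J(\tilde f)$ (the latter from expandingness plus compactness, so the integrals converge). The analytic heart — the eigenmeasure identity with eigenvalue $1$ forcing exactly the Jacobian weight $\|\tilde f'\|^\delta$ — is short; everything else is the standard transfer-operator-to-Jacobian dictionary, which I would cite in the spirit of the analogous arguments in \cite{MR1767945} and \cite{MR648108} and carry out in the fibered setting here.
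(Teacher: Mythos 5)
Your overall strategy --- reduce to $k=1$ on small Borel sets via a local inverse branch, exploit the eigenmeasure equation $\mathcal{L}_{\delta\tilde\varphi}^{*}\tilde\nu_{\delta}=\lambda\tilde\nu_{\delta}$ together with $\lambda=e^{\mathcal{P}(\delta)}=1$, and then sum over a countable partition of $A$ --- is sound, and it amounts to reproving in this setting the Denker--Urba\'{n}ski statement (Proposition 2.2 of \cite{MR1014246}) that the paper simply invokes after checking that $\mathcal{L}_{\delta\tilde\varphi}$ acts on $L^{1}(\tilde\nu_{\delta})$ and that $\tilde{f}$ is finite-to-one. Your observation that the eigenvalue is $1$ at $t=\delta$ is exactly what removes the multiplicative constant, and the reduction from a general injective $A$ to small pieces is the standard and correct bookkeeping.

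However, the execution contains a concrete slip that then derails the second half of your argument. In your first display you feed $\mathbf{1}_{B}\cdot\|\tilde{f}'\|^{-\delta}$ into $\mathcal{L}_{\delta\tilde\varphi}$, but the middle expression you actually compute, $\sum_{\tilde y}\|\tilde{f}'(\tilde y)\|^{-\delta}\,\mathbf{1}_{B}(\tilde y)\,\|\tilde{f}'(\tilde y)\|^{\delta}$, is $\mathcal{L}_{\delta\tilde\varphi}\bigl(\mathbf{1}_{B}\|\tilde{f}'\|^{+\delta}\bigr)$, since $e^{\delta\tilde\varphi}=\|\tilde{f}'\|^{-\delta}$ is already the operator weight. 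With the correct label the proof is finished: $\mathcal{L}_{\delta\tilde\varphi}\bigl(\mathbf{1}_{B}\|\tilde{f}'\|^{\delta}\bigr)=\mathbf{1}_{\tilde{f}(B)}$, and integrating against $\tilde\nu_{\delta}$ using $\mathcal{L}_{\delta\tilde\varphi}^{*}\tilde\nu_{\delta}=\tilde\nu_{\delta}$ gives $\tilde\nu_{\delta}(\tilde{f}(B))=\int_{B}\|\tilde{f}'\|^{\delta}\,d\tilde\nu_{\delta}$ directly; there is nothing to fix. Because of the mislabel you concluded the exponent came out wrong, and the repair you then propose is genuinely circular: passing from $\tilde\nu_{\delta}(B)=\int_{\tilde{f}(B)}\|\phi'\|^{\delta}\,d\tilde\nu_{\delta}$ to the desired identity by ``substituting $\tilde z=\tilde{f}(\tilde w)$'' presupposes knowing how $\tilde\nu_{\delta}$ transforms under $\tilde{f}$, which is precisely the content of the lemma (that inverse-branch identity is Proposition \ref{3ni8tG7iYu} of the paper, which is deduced from this lemma, not the other way around). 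Delete the ``fix'' paragraph, correct the exponent in the first display, and additionally record that $\tilde{f}(A)$ is Borel whenever $A$ is Borel and $\tilde{f}|_{A}$ is injective (Lusin--Souslin), so that $\tilde\nu_{\delta}(\tilde{f}(A))$ is defined; with those repairs your argument is complete and is a self-contained alternative to the paper's proof by citation.
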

\begin{proof}
One may first show that  $\mathcal{L}_{\delta\tilde{\varphi}} \psi \in L^1(\tilde{\nu}_{\delta})$ for $\psi \in L^1(\tilde{\nu}_{\delta})$ in the same manner as Lemma 3.9 in \cite{MR2153926}. 
Remark that $\tilde{f}$ is finite-to-one.
Then, following the approach taken in Lemma 3.10 in \cite{MR2153926}, we see that Proposition 2.2 in  \cite{MR1014246} verifies the claim.
\end{proof}
Adopting a similar approach in the proof of Lemma 4.3 in \cite{MR2153926}, we obtain the following.
\begin{prp}\label{3ni8tG7iYu}
Let $S$ be an expanding rational GDMS. 
Let $m$ be a probability measure on $(J(\tilde{f}), \tilde{\mathcal{F}})$ and $V$ be an open set in $X(S) \times \widehat{\C}$ such that $V \cap J(\tilde{f}) \neq \emptyset$.  
Suppose that 
\begin{enumerate}[label=(\roman*)]
\item $\tilde{f}^n|_{V}: V \to \tilde{f}^n(V)$ is a homeomorphism;
\item for any Borel set $A \subset J(\tilde{f}) \cap V$, $
m(\tilde{f}^n(A))=\int_A\|(\tilde{f}^n)^{\prime}\|^t d m$,
\end{enumerate}
and let  $\eta\colon g(V) \to V$ be an inverse mapping of  $\tilde{f}^n|_{V}$. 
Then, we have  $m(\eta(B))=\int_B\|(\tilde{f}^n)^{\prime}(\eta)\|^{-t} d m$ for any Borel set $B$ in $\tilde{f}^n(V) \cap J(\tilde{f})$.
\end{prp}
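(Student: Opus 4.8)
The plan is to deduce the identity from hypothesis (ii) by a change-of-variables argument, exploiting that $\eta$ is an honest inverse of the homeomorphism $\tilde f^n|_V$. First I would record the purely set-theoretic facts. Since $\tilde f^n|_V\colon V\to \tilde f^n(V)$ is a homeomorphism, $\eta$ is continuous and injective and carries Borel subsets of $\tilde f^n(V)$ to Borel subsets of $V$; moreover $\tilde f^n\circ\eta=\mathrm{id}$ and $\eta\circ\tilde f^n=\mathrm{id}_V$, so for any $B\subset \tilde f^n(V)\cap J(\tilde f)$ we have $\tilde f^n(\eta(B))=B$ and, crucially, $(\tilde f^n)^{-1}(B)\cap V=\eta(B)$. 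Iterating $\tilde f^{-1}(J(\tilde f))=J(\tilde f)$ from Proposition \ref{UHyK6FOUuf}\ref{sc15xYijnn} gives $(\tilde f^n)^{-1}(B)\subset J(\tilde f)$, so in fact $\eta(B)\subset V\cap J(\tilde f)$ is a Borel set to which hypothesis (ii) applies.

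Next I would upgrade (ii) to the integral change-of-variables identity
$$
\int_{\tilde f^n(V)\cap J(\tilde f)} \Phi\, dm
= \int_{V\cap J(\tilde f)} (\Phi\circ \tilde f^n)\,\|(\tilde f^n)'\|^{t}\, dm,
$$
valid for every nonnegative Borel function $\Phi$ on $\tilde f^n(V)\cap J(\tilde f)$. For an indicator $\Phi=\mathbf{1}_B$ this is exactly (ii), read through $(\tilde f^n)^{-1}(B)\cap V=\eta(B)$ and $m(\tilde f^n(\eta(B)))=m(B)$; the general case then follows by linearity and the monotone convergence theorem. I would also note that expandingness gives $\inf_{J(\tilde f)}\|(\tilde f^n)'\|\ge C\lambda^n>0$, while $J(\tilde f)$ is compact and (being expanding) contains no critical point of $\tilde f^n$, so $\|(\tilde f^n)'\|$ is continuous on $J(\tilde f)$ and bounded away from $0$ and $\infty$; in particular $\|(\tilde f^n)'\|^{-t}\|(\tilde f^n)'\|^{t}\equiv 1$ there as bounded Borel functions, so the manipulations below cause no trouble for negative $t$.

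Finally I would apply the displayed identity to $\Phi(\tilde w):=\mathbf{1}_B(\tilde w)\,\|(\tilde f^n)'(\eta(\tilde w))\|^{-t}$. Since $\eta(\tilde f^n(\tilde z))=\tilde z$ on $V\cap J(\tilde f)$, one has $(\Phi\circ \tilde f^n)(\tilde z)=\mathbf{1}_B(\tilde f^n(\tilde z))\,\|(\tilde f^n)'(\tilde z)\|^{-t}$, and multiplying by $\|(\tilde f^n)'(\tilde z)\|^{t}$ collapses the right-hand integrand to $\mathbf{1}_B(\tilde f^n(\tilde z))$. The identity then reads
$$
\int_B \|(\tilde f^n)'(\eta)\|^{-t}\, dm
= \int_{V\cap J(\tilde f)} \mathbf{1}_B\circ \tilde f^n\, dm
= m\bigl((\tilde f^n)^{-1}(B)\cap V\bigr) = m(\eta(B)),
$$
which is the assertion.

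The step I expect to require the most care is the bookkeeping in the second paragraph — specifically the identity $(\tilde f^n)^{-1}(B)\cap V=\eta(B)$ and the verification that this set lies in $J(\tilde f)$, so that (ii) genuinely applies and all integrals are taken over the correct space, together with the observation (immediate here because $\tilde f^n|_V$ is a homeomorphism) that $\eta$ sends Borel sets to Borel sets. Once these points are in place the computation is routine and entirely parallel to the proof of Lemma 4.3 in \cite{MR2153926}.
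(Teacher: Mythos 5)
Your argument is correct and is essentially the standard change-of-variables computation that the paper itself delegates to Lemma 4.3 of \cite{MR2153926}: hypothesis (ii) says that $(\tilde f^n|_V)_*\bigl(\|(\tilde f^n)'\|^t\,dm|_{V\cap J(\tilde f)}\bigr)=m|_{\tilde f^n(V)\cap J(\tilde f)}$, and testing this against $\Phi=\mathbf{1}_B\,\|(\tilde f^n)'(\eta)\|^{-t}$ gives the claim. The supporting points you flag --- $\eta(B)=(\tilde f^n)^{-1}(B)\cap V\subset J(\tilde f)$ via Proposition \ref{UHyK6FOUuf}\ref{sc15xYijnn}, Borel measurability of $\eta(B)$, and positivity of $\|(\tilde f^n)'\|$ on $J(\tilde f)$ from expandingness --- are exactly the right ones and are handled correctly.
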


Now, we may define a push-forward measure $\nu:= (\pi_{2}|_{J(\tilde{f})})_*\left(\tilde{\nu}_{\delta}\right)$ under the restriction $\pi_{2}|_{J(\tilde{f})}$.
Let $\spt(m)$ denote the support of a Borel probability measure $m$ on $J(\tilde{f})$.

\begin{prp}\label{YW4rYXJmii}
Let $S$ be a non-elementary, irreducible rational GDMS. 
Suppose that $\tilde{f}$ is expanding and the incidence matrix is aperiodic.
Then we have $\spt (\tilde{\nu}_{\delta}) = J(\tilde f)$ and $\spt(\nu)=J(S)$.
\end{prp}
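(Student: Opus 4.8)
The plan is to prove $\spt(\tilde{\nu}_{\delta}) = J(\tilde{f})$ first, and then obtain $\spt(\nu) = J(S)$ as a soft consequence about pushforward measures. A few preliminary observations: $J(\tilde{f})$ is compact, being a closed subset of the compact space $X(S)\times\widehat{\C}$; since $\tilde{f}$ is expanding along fibers, $\|\tilde{f}^{\prime}\|$ is bounded below by a positive constant on $J(\tilde{f})$, so $J(\tilde{f})$ contains no critical point of $\tilde{f}$, and hence by Proposition \ref{UHyK6FOUuf}\ref{homeo of f tilde} the map $\tilde{f}$, and therefore each iterate $\tilde{f}^{n}$, is a homeomorphism in a neighbourhood of each point of $J(\tilde{f})$; finally, by continuity of the geometric potential and compactness of $J(\tilde{f})$, we have $M_{n} := \sup_{J(\tilde{f})}\|(\tilde{f}^{n})^{\prime}\| < \infty$ for every $n\in\N$. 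Since $\spt(\tilde{\nu}_{\delta})$ is closed and contained in $J(\tilde{f})$, it suffices to show that $\tilde{\nu}_{\delta}(W) > 0$ for every open $W\subset X(S)\times\widehat{\C}$ with $W\cap J(\tilde{f})\neq\emptyset$.

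The first step is a covering estimate that upgrades Lemma \ref{tg3o75180I5} from injective pieces to arbitrary Borel sets: for every $n\in\N$ and every $A\in\tilde{\mathcal{F}}$,
\[
\tilde{\nu}_{\delta}(\tilde{f}^{n}(A)) \leq M_{n}^{\delta}\,\tilde{\nu}_{\delta}(A).
\]
To see this I would cover $J(\tilde{f})$ by finitely many relatively open sets on each of which $\tilde{f}^{n}$ is injective (possible since $\tilde{f}^{n}$ is locally injective on the compact set $J(\tilde{f})$ by the previous paragraph), disjointify to get a finite Borel partition $J(\tilde{f}) = \bigsqcup_{l}B_{l}$ with $\tilde{f}^{n}|_{B_{l}}$ injective, write $A = \bigsqcup_{l}(A\cap B_{l})$, apply Lemma \ref{tg3o75180I5} to each $\tilde{f}^{n}|_{A\cap B_{l}}$, bound $\|(\tilde{f}^{n})^{\prime}\|\leq M_{n}$, and sum over $l$.

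Now fix $W$ open with $W\cap J(\tilde{f})\neq\emptyset$ and set $A := W\cap J(\tilde{f})$. Since $S$ is non-elementary, irreducible and aperiodic and $\tilde{f}$ is expanding along fibers, Corollary \ref{ghraCYWJEm} shows that $\tilde{f}$ is topologically exact on $J(\tilde{f})$, so there is $n\in\N$ with $\tilde{f}^{n}(W)\supset J(\tilde{f})$. The complement $(X(S)\times\widehat{\C})\setminus J(\tilde{f})$ is forward invariant, because $\tilde{f}^{-1}(J(\tilde{f})) = J(\tilde{f})$ by Proposition \ref{UHyK6FOUuf}\ref{sc15xYijnn}; hence $\tilde{f}^{n}(W\setminus J(\tilde{f}))$ is disjoint from $J(\tilde{f})$, and combining this with the forward invariance of $J(\tilde{f})$ yields $\tilde{f}^{n}(A) = J(\tilde{f})$. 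The covering estimate then gives
\[
1 = \tilde{\nu}_{\delta}(J(\tilde{f})) = \tilde{\nu}_{\delta}(\tilde{f}^{n}(A)) \leq M_{n}^{\delta}\,\tilde{\nu}_{\delta}(A) = M_{n}^{\delta}\,\tilde{\nu}_{\delta}(W),
\]
the last equality because $\tilde{\nu}_{\delta}$ is carried by $J(\tilde{f})$; thus $\tilde{\nu}_{\delta}(W)\geq M_{n}^{-\delta} > 0$, and $\spt(\tilde{\nu}_{\delta}) = J(\tilde{f})$ follows. For the second assertion, recall $\nu = (\pi_{2}|_{J(\tilde{f})})_{*}\tilde{\nu}_{\delta}$ and $J(S) = \pi_{2}(J(\tilde{f}))$ by Proposition \ref{UHyK6FOUuf}\ref{projection}. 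For a continuous map $\pi\colon K\to Y$ from a compact metric space $K$ to a metric space $Y$ and a Borel probability measure $m$ on $K$ one has $\spt(\pi_{*}m) = \overline{\pi(\spt m)}$: the inclusion $\supset$ because the $\pi$-preimage of a neighbourhood is a neighbourhood, and $\subset$ because the $m$-null open set $K\setminus\spt m$ has $\pi$-image inside the $\pi_{*}m$-null open set $Y\setminus\overline{\pi(\spt m)}$. Applying this with $m = \tilde{\nu}_{\delta}$ and using $\spt(\tilde{\nu}_{\delta}) = J(\tilde{f})$ gives $\spt(\nu) = \overline{\pi_{2}(J(\tilde{f}))} = \pi_{2}(J(\tilde{f})) = J(S)$, the image being compact, hence closed.

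The main obstacle is the mechanism of the third paragraph: converting the purely topological surjectivity $\tilde{f}^{n}(A) = J(\tilde{f})$, provided by topological exactness, into a quantitative lower bound for $\tilde{\nu}_{\delta}(W)$. This is exactly where the conformality of $\tilde{\nu}_{\delta}$ encoded in Lemma \ref{tg3o75180I5} is indispensable. The one genuinely technical point to handle carefully is the reduction in the covering estimate to the injective case, which rests on the absence of critical points of $\tilde{f}$ in $J(\tilde{f})$ — a consequence of expandingness — so that the iterates $\tilde{f}^{n}$ are local homeomorphisms there; everything else is routine.
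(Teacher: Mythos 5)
Your proof is correct and follows essentially the same route as the paper, which simply asserts that the first conclusion ``follows from the topological exactness of $\tilde{f}$ and the conformality of $\tilde{\nu}_{\delta}$'' and then projects via $\pi_{2}(J(\tilde{f}))=J(S)$. Your write-up just makes the two ingredients explicit: the covering estimate $\tilde{\nu}_{\delta}(\tilde{f}^{n}(A))\leq M_{n}^{\delta}\,\tilde{\nu}_{\delta}(A)$ obtained from Lemma \ref{tg3o75180I5} via local injectivity of $\tilde{f}^{n}$ on $J(\tilde{f})$, combined with $\tilde{f}^{n}(W\cap J(\tilde{f}))=J(\tilde{f})$ from Corollary \ref{ghraCYWJEm}.
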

           \begin{proof}
 The first conclusion follows from the topological exactness of $\tilde{f}$ and the conformality of $\tilde{\nu}_{\delta}$.
Since $\pi_2 (J(\tilde{f}))=J(S)$, we obtain $\spt(\nu) = J(S)$.
\end{proof}

            Following \cite[Theorem 8.3.12]{MR4530063} and  \cite[Lemma 8.3.14 ]{MR4530063}, we obtain the results based on the Koebe distortion theorem.
\begin{lem}\label{koebe1}
 Let $R, s \in(0, \pi)$.
 Then, there exists a monotone increasing continuous function $k \colon [0,1) \to[1, \infty)$ such that for each $z \in \widehat{\C}$, $r \in(0, R]$, $t \in[0,1)$ and univalent analytic function $h: D(z, r) \to \widehat{\C}$ with $D \subset \widehat{\C} \setminus h(D(z, r))$, we have
 $$
 \sup \{\|h^{\prime}(z)\|: z \in D(z, t r)\} \leq k(t) \inf \{\|h^{\prime}(z)\|: z \in D(z, t r)\}.
 $$
\end{lem}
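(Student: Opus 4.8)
The plan is to reduce the lemma to the classical Koebe distortion theorem in the plane, using two features of the sphere: rotations of $\widehat{\C}$ (the M\"obius maps that are isometries of $(\widehat{\C},d_{\widehat{\C}})$) preserve the norm $\|\cdot\|$ of the spherical derivative, and omitting a disc of radius $s$ confines the image of $h$ to a fixed compact part of $\C$. Write $D$ for the omitted spherical disc of radius $s$ (possibly depending on $h$) and let $w_{0}$ be its centre. First I would choose a rotation $\psi$ of $\widehat{\C}$ with $\psi(w_{0})=\infty$; then $\psi\circ h$ is univalent and omits the spherical disc of radius $s$ centred at $\infty$, so its image lies in the closed Euclidean disc $K_{s}:=\{w\in\C:|w|\le\cot(s/2)\}$, and $\|(\psi\circ h)'\|=\|h'\|$ pointwise because $\|\psi'\|\equiv 1$. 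Next I would choose a rotation $\phi$ with $\phi(z)=0$ and set $g:=(\psi\circ h)\circ\phi^{-1}$, which is univalent on $\phi(D(z,r))=D(0,r)$. Since a spherical disc centred at $0$ is a round Euclidean disc centred at $0$, we have $D(0,\rho)=\{w:|w|<\tan(\rho/2)\}$ for $\rho\in(0,\pi)$ (with the standard normalisation $d_{\widehat{\C}}(0,\infty)=\pi$), so $g$ is an honest bounded univalent holomorphic map on the Euclidean disc $B(0,\tan(r/2))$ about the origin, and $\|g'(\zeta)\|$ equals $\|h'\|$ at the corresponding point of $D(z,r)$. Hence it suffices to bound $\sup\{\|g'(\zeta)\|:\zeta\in B(0,\tan(tr/2))\}$ by $k(t)$ times the corresponding infimum, uniformly in $r\in(0,R]$ and in $g$ as above.

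The single geometric input I would isolate is the inequality $\tan(tr/2)\le t\,\tan(r/2)$ for all $t\in[0,1]$ and $r\in(0,\pi)$: for fixed $u\in(0,\pi/2)$ the function $t\mapsto\tan(tu)$ is convex on $[0,1]$ (its second derivative $2u^{2}\sec^{2}(tu)\tan(tu)$ is nonnegative) and vanishes at $t=0$, hence lies below its chord. Therefore $B(0,\tan(tr/2))\subset B(0,t\,\tan(r/2))$, and the latter is concentric with the domain disc $B(0,\tan(r/2))$ of relative radius exactly $t$ --- a bound $\le t<1$ that is \emph{independent of $r$}. This is what forces the eventual distortion constant to depend on $t$ alone (the fixed numbers $R,s$ may enter too). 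Note also that $\tan(r/2)\le\tan(R/2)<\infty$ since $r\le R<\pi$, so the domain disc stays of bounded Euclidean size.

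Now I would invoke the planar Koebe distortion theorem for $g$ on $B(0,\tan(r/2))$: for $|\zeta|\le t\,\tan(r/2)$ one has $\frac{1-t}{(1+t)^{3}}\le |g'(\zeta)|/|g'(0)|\le\frac{1+t}{(1-t)^{3}}$, whence
\[
\frac{\sup\{|g'(\zeta)|:|\zeta|\le t\tan(r/2)\}}{\inf\{|g'(\zeta)|:|\zeta|\le t\tan(r/2)\}}\le\Bigl(\frac{1+t}{1-t}\Bigr)^{4}.
\]
Finally I would pass back to the spherical derivative via $\|g'(\zeta)\|=\dfrac{1+|\zeta|^{2}}{1+|g(\zeta)|^{2}}\,|g'(\zeta)|$: on the relevant range $1+|\zeta|^{2}\le\sec^{2}(R/2)$ (as $|\zeta|\le\tan(R/2)$) and $1\le 1+|g(\zeta)|^{2}\le\csc^{2}(s/2)$ (as $g(\zeta)\in K_{s}$), so the conformal factor lies in $[\sin^{2}(s/2),\sec^{2}(R/2)]$. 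Combining the two estimates, the spherical $\sup/\inf$ ratio over $B(0,\tan(tr/2))$ --- equivalently over $D(z,tr)$ once the two rotations are undone --- is at most $\sec^{2}(R/2)\csc^{2}(s/2)\bigl((1+t)/(1-t)\bigr)^{4}$. One then checks that $k(t):=\sec^{2}(R/2)\csc^{2}(s/2)\bigl((1+t)/(1-t)\bigr)^{4}$ is continuous and strictly increasing on $[0,1)$ with $k(0)=\sec^{2}(R/2)\csc^{2}(s/2)\ge 1$, which is the desired function (the case $t=0$ being vacuous).

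The only genuine obstacle is the uniformity in $r\in(0,R]$: a direct application of Koebe on the spherical disc produces a bound depending on the relative radius $\tan(tr/2)/\tan(r/2)$, i.e. on $r$ as well as $t$. The convexity inequality $\tan(tr/2)\le t\tan(r/2)$ removes the $r$-dependence, which is precisely what the statement demands; the two sphere rotations, the comparison of conformal factors on the compact sets $K_{s}$ and $B(0,\tan(R/2))$, and the classical Koebe theorem are then routine bookkeeping.
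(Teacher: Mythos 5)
Your argument is correct. Note first that the paper does not actually prove this lemma: it only cites \cite[Theorem 8.3.12, Lemma 8.3.14]{MR4530063}, so there is no in-paper proof to compare against; what you have written is essentially the standard derivation that such a spherical distortion statement rests on, carried out in full. The two reductions (rotating the omitted disc to a neighbourhood of $\infty$ so that the image lands in the Euclidean disc of radius $\cot(s/2)$, and rotating the base point to $0$ so that the spherical disc becomes the Euclidean disc of radius $\tan(r/2)$) are exactly right, since spherical rotations have $\|\psi'\|\equiv 1$ and hence do not affect the sup/inf ratio of $\|h'\|$. You also correctly identified and resolved the one non-routine point, the uniformity in $r\in(0,R]$: a naive application of Koebe gives a constant depending on the relative radius $\tan(tr/2)/\tan(r/2)$, and the convexity inequality $\tan(tr/2)\le t\tan(r/2)$ (valid because $t\mapsto\tan(tu)$ is convex on $[0,1]$ for $u\in(0,\pi/2)$ and vanishes at $t=0$) replaces this by $t$, which together with the monotonicity of $x\mapsto\bigl((1+x)/(1-x)\bigr)^4$ yields an $r$-independent bound. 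The passage back to the spherical derivative via the conformal factor $(1+|\zeta|^2)/(1+|g(\zeta)|^2)\in[\sin^2(s/2),\sec^2(R/2)]$ is also correct, and your final $k(t)=\sec^2(R/2)\csc^2(s/2)\bigl((1+t)/(1-t)\bigr)^4$ is continuous, increasing and $\ge 1$ as required. A further merit of your write-up is that you supply the reading of the symbol $D$ (left undefined in the lemma as stated) as the omitted spherical disc of radius $s$, which is the intended hypothesis in the cited source. I see no gap.
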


\begin{lem}\label{koebe2}
Let $z \in \widehat{\C}$ and $R,s \in 0,(0, \pi)$.
Let $h\colon D(z, 2 R) \to \widehat{\C}$ be a univalent analytic function such that  $h(D(z, 2 r)) \cap D \neq \emptyset$ with some radius $s \in(0, \pi)$.
Then, for each $0 \leq r \leq R$ we have
$$
D(h(z), k^{-1}(2^{-1}) r\|h^{\prime}(z)\|)  \subset h(D(z, r)) \subset D(h(z), k(2^{-1}) r\|h^{\prime}(z)\|).
$$
\end{lem}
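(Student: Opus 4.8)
The plan is to deduce both inclusions from the bounded-distortion estimate of Lemma~\ref{koebe1}, mirroring the passage from \cite[Theorem~8.3.12]{MR4530063} to \cite[Lemma~8.3.14]{MR4530063}. Write $M:=\|h'(z)\|$. Since $k$ is increasing and $D(z,r)$ is concentric in the domain $D(z,2R)$ of $h$ with radius ratio $r/(2R)\le 2^{-1}$, the omitted-disc hypothesis on $h$ lets us invoke Lemma~\ref{koebe1} to obtain
\[
\sup_{\zeta\in D(z,r)}\|h'(\zeta)\|\ \le\ k(2^{-1})\,\inf_{\zeta\in D(z,r)}\|h'(\zeta)\| .
\]
Because $z\in D(z,r)$, this gives the two bounds $\sup_{D(z,r)}\|h'\|\le k(2^{-1})M$ and $\inf_{D(z,r)}\|h'\|\ge M/k(2^{-1})$, which drive everything below. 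Here I adopt the convention, implicit in the statement, that $k^{-1}(2^{-1})$ denotes $1/k(2^{-1})\in(0,1]$.

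For the outer inclusion, fix $w\in D(z,r)$ and let $\gamma_w$ be the spherical geodesic from $z$ to $w$; every point of $\gamma_w$ lies at spherical distance at most $d_{\widehat{\C}}(z,w)<r<\pi$ from $z$, so $\gamma_w\subset D(z,r)$. Integrating the derivative along $\gamma_w$,
\[
d_{\widehat{\C}}(h(w),h(z))\ \le\ \ell(h\circ\gamma_w)\ =\ \int_{\gamma_w}\|h'\|\ \le\ d_{\widehat{\C}}(z,w)\,\sup_{D(z,r)}\|h'\|\ <\ k(2^{-1})\,r\,M ,
\]
so $h(w)\in D(h(z),k(2^{-1})rM)$, whence $h(D(z,r))\subset D(h(z),k(2^{-1})rM)$.

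For the inner inclusion, set $\rho:=\dist_{\widehat{\C}}(h(z),\widehat{\C}\setminus h(D(z,r)))$; it suffices to show $\rho\ge rM/k(2^{-1})$, since then $D(h(z),\rho)\subset h(D(z,r))$ directly from the definition of $\rho$. As $h$ is injective and continuous on the compact disc $\overline{D(z,r)}\subset D(z,2R)$, it is a homeomorphism onto $\overline{h(D(z,r))}$, so $\partial h(D(z,r))=h(\partial D(z,r))$ and hence $\rho=\min\{d_{\widehat{\C}}(h(z),h(w)):d_{\widehat{\C}}(z,w)=r\}$, attained at some $w_0$. Let $\gamma$ be the geodesic from $h(z)$ to $h(w_0)$; its interior lies in $D(h(z),\rho)\subset h(D(z,r))$, so $h^{-1}\circ\gamma$ is a path in $\overline{D(z,r)}$ from $z$ to $w_0$, and
\[
r=d_{\widehat{\C}}(z,w_0)\ \le\ \ell(h^{-1}\circ\gamma)\ =\ \int_{\gamma}\|(h^{-1})'\|\ \le\ \rho\,\sup_{D(h(z),\rho)}\|(h^{-1})'\|\ \le\ \frac{\rho}{\inf_{D(z,r)}\|h'\|}\ \le\ \frac{\rho\,k(2^{-1})}{M},
\]
which rearranges to $\rho\ge rM/k(2^{-1})$, i.e.\ $D(h(z),k^{-1}(2^{-1})\,rM)\subset h(D(z,r))$.

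The argument is essentially routine once Lemma~\ref{koebe1} is available; there is no substantial obstacle. The only points that require care are the spherical-geometry bookkeeping — verifying that the geodesics $\gamma_w$ and $h^{-1}\circ\gamma$ remain inside the discs on which $h$ (resp.\ $h^{-1}$) is distortion-controlled, which is where $r<\pi$ enters — and the boundary identity $\partial h(D(z,r))=h(\partial D(z,r))$, which rests on $h$ being a homeomorphism up to the boundary of $\overline{D(z,r)}$. The one bit of friction I anticipate is purely bookkeeping: matching the hypothesis on the omitted disc of radius $s$ with the precise form in which Lemma~\ref{koebe1} is quoted, i.e.\ ensuring that $D(z,r)$ sits concentrically, with controlled radius ratio, inside a disc on which the omitted-value condition grants the bounded distortion of $h'$.
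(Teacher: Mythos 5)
Your proof is correct and is essentially the standard argument that the paper delegates to \cite[Theorem 8.3.12 and Lemma 8.3.14]{MR4530063}: deduce the two ball inclusions from the derivative-distortion bound of Lemma \ref{koebe1} by integrating $\|h^{\prime}\|$ (for the outer inclusion) and $\|(h^{-1})^{\prime}\|$ (for the inner one) along spherical geodesics. Your reading of the garbled hypotheses --- the omitted disc $D$ of radius $s$, and $k^{-1}(2^{-1})$ meaning $k(2^{-1})^{-1}$ --- matches the intended statement, so there is nothing to add.
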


Let $E \subset \widehat{\C}$, $u \geq 0$.
We shall denote by $\mathcal{H}^u(E)$ and $\dim_H(E)$  respectively the $u$-dimensional (outer) Hausdorff measure and Hausdorff dimension of $E$ with respect to the distance $d_{\widehat{\C}}$.

To prove the next two theorems, we shall adopt approaches similar to the ones developed in \cite[Theorem 3.4]{sumi1998hausdorff} and  \cite[Lemma 3.16]{MR2153926}.
\begin{dfn}\cite{MR4002398}\label{wVNh1F6ao0}
We say a rational GDMS satisfies the \textit{backward separating condition} (BSC)  if 
$$
g_1^{-1}\left(J_{t\left(e_1\right)}(S)\right) \cap g_2^{-1}\left(J_{t\left(e_2\right)}(S)\right)=\emptyset
$$ for every $e_1, e_2 \in E$ with the same initial vertex and for every $g_1 \in \Gamma_{e_{1}}, g_2 \in \Gamma_{e_{2}}$, except the case $e_1=e_2$ and $g_1=g_2$.
\end{dfn}

\begin{thm}\label{thm_dimension_julia_delta}
  Let $S$ be a non-elementary, expanding, irreducible and aperiodic rational GDMS satisfying the backward separating condition. 
    Then we have $\mathcal{H}^{\delta}(J(S)) > 0.$
\end{thm}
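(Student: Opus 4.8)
The plan is to establish a uniform lower bound for the $\tilde\nu_\delta$-measure of balls, which by the mass distribution principle (Frostman's lemma) yields $\mathcal{H}^{\delta}(J(S)) > 0$. Concretely, I would show that there is a constant $c > 0$ such that for every $z \in J(S)$ and every sufficiently small $r > 0$, one has $\nu(D(z,r)) \leq c\, r^{\delta}$; since $\nu = (\pi_2|_{J(\tilde f)})_*\tilde\nu_\delta$ and $\spt(\nu) = J(S)$ by Proposition \ref{YW4rYXJmii}, this gives $\mathcal{H}^{\delta}(J(S)) \geq 1/c > 0$.

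\textbf{Step 1: Pull back to the fiber level.} Fix $\tilde z = (\xi, z) \in J(\tilde f)$ with $z \in J(S)$ and a small $r > 0$. Using expandingness along fibers, iterate $\tilde f$ until the relevant fiber piece reaches a definite size: there is $n = n(\tilde z, r) \in \N$ such that $\|(\tilde f^n)'(\tilde z)\|$ is of order $1/r$, i.e. $\|(\tilde f^n)'(\tilde z)\| \cdot r \asymp \rho_0$ for a fixed scale $\rho_0 \in (0,\pi)$ independent of $\tilde z, r$. This uses that the derivative grows at a uniform exponential rate (the constants $C, \lambda$) together with a uniform upper bound on the per-step distortion/derivative coming from compactness of the (finite) generator set $\Gamma$ away from critical points, which hold on $J(\tilde f)$ since the system is expanding (hence hyperbolic, so $J(\tilde f)$ avoids critical points).

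\textbf{Step 2: Koebe distortion and bounded overlap.} On the fiber through $\xi$, the map $g_{\xi,n}\circ\cdots\circ g_{\xi,1}$ restricted to a disc $D(z, c_1 r)$ is univalent (again using hyperbolicity: the orbit stays away from the postcritical sets $P_i(S)$, so there is a definite-radius disc about $z$ on which all the inverse branches are univalent — this is where the backward separating condition also enters, guaranteeing the branches of $\tilde f^{-n}$ landing in distinct cylinders have disjoint images meeting $J(\tilde f)$). By Lemmas \ref{koebe1} and \ref{koebe2}, $g_{\xi,n}\circ\cdots\circ g_{\xi,1}(D(z, c_1 r))$ contains and is contained in spherical discs of radius comparable to $r\|(\tilde f^n)'(\tilde z)\| \asymp \rho_0$. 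Now apply the conformality relation from Lemma \ref{tg3o75180I5} (equivalently Proposition \ref{3ni8tG7iYu}): on the cylinder-times-disc neighborhood $[\xi_1,\dots,\xi_n]\times D(z, c_1 r)$, on which $\tilde f^n$ is a homeomorphism, $\tilde\nu_\delta$ transforms by the Jacobian $\|(\tilde f^n)'\|^\delta$. Hence $\tilde\nu_\delta\big([\xi_1,\dots,\xi_n]\times D(z,c_1 r) \cap J(\tilde f)\big) = \int \|(\tilde f^n)'\|^{-\delta}\, d\tilde\nu_\delta$ over the image, and by bounded distortion this is $\asymp r^\delta \cdot \tilde\nu_\delta(\text{image} \cap J(\tilde f)) \leq r^\delta$, since $\tilde\nu_\delta$ is a probability measure. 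The backward separating condition is what lets me control $\nu(D(z,r)) = \tilde\nu_\delta(\pi_2^{-1}D(z,r)\cap J(\tilde f))$ by a \emph{single} such cylinder term (up to a bounded multiplicative constant depending only on $\#E$ and the number of overlapping cylinders of fixed length), rather than summing over uncontrollably many preimage branches.

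\textbf{The main obstacle} will be Step 2: making the passage from the fiber-level conformality (which lives on $J(\tilde f) \subset X(S)\times\widehat\C$) down to a genuine estimate on $\nu(D(z,r))$ in $\widehat\C$, because different points $\xi \in X(S)$ with $z \in J_\xi$ contribute to the same spherical ball, and a priori the fibers could pile up. The backward separating condition is precisely the hypothesis that tames this: it forces the relevant level-$n$ cylinders whose fibers hit $D(z,r)$ to have pairwise-disjoint (fiber) Julia parts after pulling back, so the total $\tilde\nu_\delta$-mass over $\pi_2^{-1}(D(z,r))$ is dominated by a bounded number of the individual cylinder estimates. I would isolate this as the technical heart of the proof, mirroring \cite[Lemma 3.16]{MR2153926} and \cite[Theorem 3.4]{sumi1998hausdorff}, and then conclude via Frostman.
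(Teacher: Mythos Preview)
Your proposal is correct and follows essentially the same route as the paper: select the iteration depth $n$ from expandingness so that $\|(\tilde f^n)'(\tilde z)\|\asymp 1/r$, apply Koebe distortion (Lemmas \ref{koebe1}--\ref{koebe2}) to the inverse branch, use the BSC to trap $\pi_2^{-1}(D(z,r))\cap J(\tilde f)$ in a single cylinder, invoke conformality (Proposition \ref{3ni8tG7iYu}) to bound the $\tilde\nu_\delta$-mass by $Cr^\delta$, and conclude via the mass distribution principle. Two small refinements worth noting: the paper restricts from the outset to a fixed initial vertex $i$ and works with $\nu_i := (\pi_2)_*\bigl(\tilde\nu_\delta|_{\pi_1^{-1}X_i(S)}\bigr)$ on $J_i(S)$, so that the BSC claim yields containment in \emph{exactly one} cylinder $\boldsymbol D$ (no bounded-overlap constant needed); and the hyperbolicity you invoke in Step~2 is deduced in the paper from expanding $+$ BSC via \cite[Theorem 2.8]{sumi1998hausdorff}, not from expandingness alone.
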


\begin{proof}
By the same arguments as in  \cite[Proof of Theorem 2.8]{sumi1998hausdorff}  we have that $S$ is hyperbolic.
 Thus, we may take some $\varepsilon \in (0,1)$ such that for all $j \in V$ there exists a spherical neighborhood $\mathcal{N}(J_{j}(S), \varepsilon)$ of $J_{j}(S)$ with $\mathcal{N}(J_{j}(S), \varepsilon) \cap P_{j}(S) = \emptyset$.
For each $j \in V$, we define
\begin{align*}
\rho_{j} := &\min\{\dist_{\widehat\C}\left(g_{1}^{-1}(J_{t(e_{1})}(S)), g_{2}^{-1}(J_{t(e_{2})}(S) )\right)\\
& : e_{1}, e_{2} \in E \text{ with } i(e_{1}) = i(e_{2}) = j, \text{ and } g_{1} \in \Gamma_{e_{1}}, g_{2} \in \Gamma_{e_{2}} \text{ except for  $g_{1} = g_{2}$, $e_{1} = e_{2}$} \}.
\end{align*}
By the BSC, $\rho_{j} > 0$ for each $j \in V$, and thus $\rho := \min_{j \in V} \rho_{j}>0$. 

Fix $\tilde{z}:=(\xi,z) \in J(\tilde{f})$ and $r \in(0, \varepsilon)$, and let $i := i(e_{\xi,1})$. 
Remark that $z \in J_i(S)$.
Then,  \ref{sc15xYijnn} in Proposition \ref{UHyK6FOUuf} and \ref{projection} in Proposition \ref{UHyK6FOUuf} show that for each $n \in \N$ there exists $\xi|_{n}\in X^{n}(S)$ such that 
$z_{n} := g_{\xi|_{n}}(z) = \pi_{2}(\tilde{f}^{n}(\tilde{z})) \in J_{t(e_{\xi, n})}(S)$.
Also, by the analytic continuation, there exists a holomorphic map $\gamma_{\xi|_{n}}$ on $D(z_{n}, \varepsilon)$, which is the inverse mapping of the restriction of $g_{\xi|_{n}}$ to $g^{-1}_{\xi|_{n}}(D(z_{n}, \varepsilon))$.
We shall write $r_{n} := 2^{-1}\varepsilon \|\gamma_{\xi|_{n}}^{\prime}(z_{n})\| > 0$ and remark that $\lim_{n \to \infty}r_{n} = 0$ because $\tilde f$ is expanding along fibers. 

Besides, we use Lemma \ref{koebe2} to ensure that for all $n \in \N$ we have
\begin{equation}\label{inclusion_eta_1}
D(z, K^{-1}r_{n})
\subset \gamma_{\xi|_{n}}(D(z_{n}, 2^{-1}\varepsilon)) 
\subset D(z, Kr_{n}),
\end{equation}
where $K := k(2^{-1})$ is given in Lemma \ref{koebe2}. 
In addition, by Lemma \ref{koebe1}  we see that 
    \begin{equation}\label{enqlClDijr}
        \|\gamma_{\xi|_{n}}^{\prime}(\zeta)\| \leq 2K\varepsilon^{-1} r_{n}, 
    \end{equation}
for all $\zeta \in D(z_{n}, 2^{-1}\varepsilon)$. 

Furthermore, writing $\Gamma := \cup_{e \in E}\Gamma_{e}$, we have for all $n \in \N$ 
$$
\frac{r_{n}}{r_{n + 1}}
                = \frac{\|\gamma_{\xi|_{n}}^{\prime}(z_{n})\|}{\|\gamma_{\xi|_{n + 1}}^{\prime}(z_{n + 1})\|} 
\leq \sup_{g \in  \Gamma} \sup_{z \in g^{-1}(J(S))}\|g^{\prime}(z)\|
=: c_{M}.
$$
Letting  $n$ be the largest natural number such that
\begin{equation}\label{K4r63f7qhJ}
r_{n + 1} < Kr \leq r_{n},
\end{equation}
we obtain 
\begin{equation}\label{bhhlyOt8hr}
K_{1}^{-1} := K^{-1}c^{-1}_{M} 
< \frac{r}{r_{n + 1}}\frac{r_{n + 1}}{r_{n}}
=  rr^{-1}_{n}
\leq K^{-1}.
\end{equation}
By taking $\varepsilon>0$ sufficiently small, we may assume that $K^{-1}r_n = K^{-1}2^{-1}\varepsilon \|\gamma_{\xi|_{n}}^{\prime}(z_{n})\| < \rho$.
Note that since $\tilde f$ is expanding along fibers, we can choose $\varepsilon$ independently of the point $\tilde{z} \in J(\tilde f)$.

Next, we define  $\boldsymbol{D} := \{(\tau, \zeta) \in J(\tilde{f}) :  \tau \in [\xi_{1}\ldots, \xi_{n}], \zeta \in \gamma_{\xi|_{n}}\left(D(z_{n}, 2^{-1}\varepsilon) \right)\}$.
 Then, we see that  $\tilde{f}^{n}|_{\boldsymbol{D}} : \boldsymbol{D} \to \tilde{f}^{n}(\boldsymbol{D})$ is a homeomorphism with respect to the subspace topology on $\boldsymbol{D}$ and that we have
$$
\tilde{f}^{n}(\boldsymbol{D}) = \left\{(\tau, \zeta) \in J(\tilde{f})  : i(e_{\tau,1})=t(e_{\xi, n}), \zeta \in D(z_{n}, 2^{-1}\varepsilon) \right\}.
$$
Also, we let $\eta_{\xi|_{n}}$ denote some inverse branch of $\tilde{f}^{n}|_{\boldsymbol{D}}$ given by $\eta_{\xi|_{n}}((\tau, \zeta)) = (\xi|_n  \tau, \gamma_{\xi|_{n}}(\zeta))$ for each $(\tau, \zeta) \in \tilde{f}(D)$. 
    Now, let us prove the following claim under the setting developed in this argument.
\begin{claim*} 
\begin{equation}\label{inclusion_ita_from_below_new}
\pi_1^{-1}(X_i(S)) \cap \pi_{2}|_{J(\tilde{f})}^{-1}\left(D(z, K^{-1}r_{n})\right)  \subset \boldsymbol{D} .
\end{equation}
\end{claim*}
\begin{proof} 
Let $(\tau, \kappa)$ be an element of the LHS of (\ref{inclusion_ita_from_below_new}), where we write $\tau := (\tau_{1}, \tau_{2}, \ldots)$.
Then, it suffices to prove $\tau \in [\xi_{1}\ldots, \xi_{n}]$ due to (\ref{inclusion_eta_1}).
 Suppose for contradiction that $\tau_{1} \neq \xi_{1}$. 
 Remark that the definitions of $(\tau, \kappa)$ and $(\xi, z)$ imply that $i(e_{\xi,1})=i(e_{\tau,1})=i$, $\kappa \in g_{\tau_1}^{-1} (J_{t(e_{\tau,1})}(S))$, $z\in g_{\xi_1}^{-1}(J_{t(e_{\xi,1})}(S))$.
In addition,  $d_{\widehat\C}(\kappa, z)\le K^{-1} r_n < \rho$. 
However, this contradicts the fact that $d_{\widehat\C}(\kappa, z) \geq \rho$, which is guaranteed by the assumption of the BSC. 
Hence, we have $\tau_1=\xi_1$. Similarly, one may verify $\tau_k=\xi_k$ for all $k \in \N$ with $2\le k\le n$.
Our claim has been verified.
 \end{proof}
 Recall the Borel probability measure $\tilde \nu := \tilde \nu_{\delta}$ given by Lemma \ref{qQrOdN2Nmt} and the push-forward measure $\nu:= (\pi_{2}|_{J(\tilde{f})})_*(\tilde{\nu})$ under the restriction $\pi_{2}|_{J(\tilde{f})}$.
Then, we may define the restriction of $(J(\tilde{f}), \tilde{\mathcal{F}}, \tilde{\nu})$ to $\pi_1^{-1}(X_i(S))$ written as
$\left(J(\tilde{f}) \cap {\pi_1^{-1}(X_i(S))}, \tilde{\mathcal{F}}\cap {\pi_1^{-1}(X_i(S))}, \tilde \nu|_{\pi_1^{-1}(X_i(S))}\right)$.
Denote $\mathcal{F}_{i} := \tilde{\mathcal{F}}\cap {\pi_1^{-1}(X_i(S))}$. Also, we have $ J_{i}(S) = \pi_{2}\left(J(\tilde{f}) \cap {\pi_1^{-1}(X_i(S))} \right)$.
Then, the push-forward of $\mathcal{F}_{i}$ under $\pi_{2}|_{J(\tilde{f})}$ is given by
$$
\pi_{2}|_{J(\tilde{f})}(\mathcal{F}_{i}) := \ \left\{A \subset J_{i}(S) :  \pi_{2}|_{J(\tilde{f})}^{-1}(A) \in \mathcal{F}_{i} \right\}
$$
proves to be a $\sigma$-algebra on $J_{i}(S)$.
Hence, we may define a measure $\nu_i:= \left(\pi_{2}|_{J(\tilde{f})}\right)_*\left(\tilde \nu|_{\pi_1^{-1}(X_i(S))}\right)$ given by 
$$
\left(\pi_2|_{J(\tilde{f})}\right)_*\left(\tilde \nu|_{\pi_1^{-1}(X_i(S))}\right)(A) := \tilde{\nu}\left(\pi_{2}|_{J(\tilde{f})}^{-1}(A)\cap \pi_1^{-1}(X_i(S))\right)
$$
for each $\pi_{2}|_{J(\tilde{f})}(\mathcal{F}_{i})$-measurable set $A$.
 Then, we obtain
\begin{align*}
\nu_{i}(D(z, r) \cap J_{i}(S))
& = \tilde{\nu}\left(\pi_1^{-1}(X_i(S)) \cap \pi_{2}|_{J(\tilde{f})}^{-1}\left(D(z, r)\right)\right)\\
&\leq \tilde{\nu}\left(\pi_1^{-1}(X_i(S)) \cap \pi_{2}|_{J(\tilde{f})}^{-1}\left(D(z, K^{-1}r_{n})\right) \right) \notag \quad (\because (\ref{K4r63f7qhJ}))\\ 
& \leq \tilde{\nu}\left(\eta_{\xi|_{n}} (\tilde{f}^{n}(\boldsymbol{D}))\right) \quad (\because (\ref{inclusion_ita_from_below_new}))\notag\\
& = \int_{\tilde{f}^{n}(\boldsymbol{D})} \|(\gamma_{\xi|_{n}})^{\prime}\|^{\delta} d \tilde{\nu} \quad (\because \text{Lemma } \ref{3ni8tG7iYu})\notag\\
& < (2KK_{1}\varepsilon^{-1})^{\delta}r^{\delta}. \quad (\because (\ref{enqlClDijr})(\ref{bhhlyOt8hr})) \notag
\end{align*}
 Recalling that $z\in J_i(S)$ is arbitrarily taken, we see that combining this with  Proposition 2.2 in \cite{97_falconer_techniques} and the fact that  $\nu_{i}(J_i(S))=1$ implies $\mathcal{H}^{\delta}(J(S)) \geq \mathcal{H}^{\delta}(J_{i}(S)) > 0$.
The proof is complete.
\end{proof}

\begin{thm}\label{cI0oM7KYMj}
Let $S$ be a non-elementary, expanding, irreducible and aperiodic rational GDMS. 
Then we have $\mathcal{H}^\delta(J(S)) < \infty$ and  $\overline \dim_{B}(J(S))\le \delta$.
\end{thm}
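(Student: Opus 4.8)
The plan is to establish the upper bound $\mathcal{H}^{\delta}(J(S)) < \infty$ by a covering argument and then deduce the box-counting bound $\overline{\dim}_B(J(S)) \le \delta$ as a consequence. First I would reuse the geometric setup from the proof of Theorem \ref{thm_dimension_julia_delta}: for each $\tilde z = (\xi, z) \in J(\tilde f)$ and small $r > 0$ one obtains, via analytic continuation and the expandingness of $\tilde f$ along fibers, inverse branches $\gamma_{\xi|_n}$ of $g_{\xi|_n}$ on discs $D(z_n, \varepsilon)$, with scales $r_n := 2^{-1}\varepsilon\|\gamma_{\xi|_n}^{\prime}(z_n)\|$ tending to $0$; one chooses $n = n(\tilde z, r)$ so that $r_{n+1} < Kr \le r_n$. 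The Koebe estimates (Lemma \ref{koebe1}, Lemma \ref{koebe2}) then sandwich the image discs, giving $D(z, K^{-1}r_n) \subset \gamma_{\xi|_n}(D(z_n, 2^{-1}\varepsilon)) \subset D(z, Kr_n)$ together with a uniform distortion bound on $\gamma_{\xi|_n}^{\prime}$. Crucially, unlike in Theorem \ref{thm_dimension_julia_delta}, no backward separating condition is available, so the inverse branch images over different cylinders $[\xi_1, \ldots, \xi_n]$ need not be disjoint — but for an \emph{upper} bound on $\mathcal{H}^\delta$ that is harmless, since we only need to \emph{cover} $J(S)$ efficiently, not to bound the measure of a ball from below.

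The main step is a Vitali-type covering estimate. Fix a small $r > 0$. For each $z \in J(S)$, lift to some $\tilde z = (\xi, z) \in J(\tilde f)$ (using $J(S) = \pi_2(J(\tilde f))$ from Proposition \ref{UHyK6FOUuf}\ref{projection}) and form the disc $D(z, Kr_{n(\tilde z,r)})$ of comparable radius; extracting a Besicovitch-type subcover of $J(S)$ by such discs with bounded overlap, it suffices to bound $\sum_j (\diam V_j)^\delta$ where each $V_j = \gamma_{\xi^{(j)}|_{n_j}}(D(z_{n_j}, 2^{-1}\varepsilon))$ is an inverse-branch image, summed over a family with bounded multiplicity. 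Applying the conformal measure identity from Lemma \ref{3ni8tG7iYu} in the form $\tilde\nu_\delta(\eta_{\xi|_n}(\tilde f^n(\boldsymbol D))) = \int_{\tilde f^n(\boldsymbol D)} \|\gamma_{\xi|_n}^{\prime}\|^\delta\, d\tilde\nu_\delta$, together with the distortion bound $\|\gamma_{\xi|_n}^{\prime}(\zeta)\| \le 2K\varepsilon^{-1} r_n$ and $r_n \asymp r$, one gets $(\diam V_j)^\delta \lesssim \tilde\nu_\delta(\eta_{\xi^{(j)}|_{n_j}}(\tilde f^{n_j}(\boldsymbol D_j)))$ up to a constant independent of $r$ and $j$. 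Since the preimage sets $\eta_{\xi^{(j)}|_{n_j}}(\cdots) \subset J(\tilde f)$ have bounded overlap (inherited from the bounded overlap of the discs $D(z_j, Kr_{n_j})$ together with the injectivity of each $\tilde f^{n_j}$ on $\boldsymbol D_j$, at the cost of a further fixed multiplicative constant), summing and using $\tilde\nu_\delta(J(\tilde f)) = 1$ yields $\sum_j (\diam V_j)^\delta \le C$ for a constant $C$ independent of $r$. Letting $r \to 0$ gives $\mathcal{H}^\delta(J(S)) \le C < \infty$.

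For the box-counting bound: since $J(S)$ is compact and $\mathcal{H}^\delta(J(S)) < \infty$, a standard argument shows $\overline{\dim}_B(J(S)) \le \delta$ provided one in fact has a covering of $J(S)$ by $N(r)$ sets of diameter $\le 2Kr$ with $N(r) \cdot r^\delta$ bounded; but the subcover extracted above has exactly this property, so $N(r) \le C r^{-\delta}$ and hence $\overline{\dim}_B(J(S)) = \limsup_{r\to 0} \frac{\log N(r)}{-\log r} \le \delta$. (Alternatively, one invokes the fact that positive finite $\delta$-Hausdorff measure together with a uniform packing/covering regularity at all small scales forces the upper box dimension to equal $\delta$; the covering we construct supplies exactly the needed uniformity.)

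The step I expect to be the main obstacle is controlling the overlap multiplicity of the preimage families $\{\eta_{\xi^{(j)}|_{n_j}}(\tilde f^{n_j}(\boldsymbol D_j))\}_j$ inside $J(\tilde f)$, and ensuring this multiplicity is bounded by a constant independent of $r$. On the base $\widehat{\C}$ a Besicovitch covering lemma handles the bounded overlap of the discs $D(z_j, Kr_{n_j})$ of comparable radii, but different inverse branches over \emph{different} cylinder words may pile up over the same region of $\widehat{\C}$ while still being disjoint (or boundedly overlapping) in the product $X(S)\times\widehat{\C}$; quantifying this requires using that $\tilde f^{n_j}$ restricted to $\boldsymbol D_j$ is a homeomorphism onto a set of the form $\pi_1^{-1}(\{[\tau] : i(e_{\tau,1}) = t(e_{\xi,n_j})\}) \cap (\text{disc of radius }2^{-1}\varepsilon)$ — i.e. a "full" fiber disc over an appropriate cylinder — so that overlaps of the images downstairs translate, after pulling back, into a number of overlapping pieces in $J(\tilde f)$ that is bounded in terms of the Besicovitch constant, $\#V$, and $\#\Gamma$ only. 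Getting this bookkeeping right, uniformly in $r$, is where the care is needed; the rest follows the templates of \cite[Theorem 3.4]{sumi1998hausdorff} and \cite[Lemma 3.16]{MR2153926}.
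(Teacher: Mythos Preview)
Your approach is workable but takes a considerably more roundabout route than the paper's. The paper does not build a Besicovitch cover or track overlap multiplicity at all; instead it proves directly that the pushforward measure $\nu = (\pi_2|_{J(\tilde f)})_*\tilde\nu_\delta$ satisfies a uniform \emph{lower} density bound $\nu(D(z,r)) \ge c\,r^\delta$ for every $z\in J(S)$ and small $r$, and then invokes Falconer's Proposition~2.2 to obtain both $\mathcal H^\delta(J(S))<\infty$ and $\overline\dim_B(J(S))\le\delta$ in one stroke. Concretely, the paper observes the trivial inclusion $\pi_2|_{J(\tilde f)}^{-1}(D(z,Kr_n))\supset \boldsymbol D$ (no BSC needed for this direction), applies the conformal identity (Proposition~\ref{3ni8tG7iYu}) and Koebe to get $\tilde\nu_\delta(\boldsymbol D)\asymp r_n^\delta\cdot\tilde\nu_\delta(\tilde f^n(\boldsymbol D))$, and then uses the key extra ingredient you do not mention: $\spt(\nu)=J(S)$ (Proposition~\ref{YW4rYXJmii}) plus compactness gives a uniform constant $M(2^{-1}\varepsilon)>0$ with $\nu(D(z_n,2^{-1}\varepsilon))\ge M(2^{-1}\varepsilon)$, hence $\tilde\nu_\delta(\tilde f^n(\boldsymbol D))\ge M(2^{-1}\varepsilon)$ uniformly in $n$ and $\tilde z$. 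This yields $\nu(D(z,Kr_n))\ge c\,(Kr_n)^\delta$, and Falconer's density lemma finishes.

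Two remarks on your version. First, the step you pass over with ``up to a constant independent of $r$ and $j$'' --- namely $(\diam V_j)^\delta \lesssim \tilde\nu_\delta(\boldsymbol D_j)$ --- is exactly where the uniform lower bound on $\tilde\nu_\delta(\tilde f^{n_j}(\boldsymbol D_j))$ is needed; without invoking $\spt(\tilde\nu_\delta)=J(\tilde f)$ that inequality can fail, so even your argument implicitly rests on the same ingredient the paper isolates. Second, the overlap control you flag as the main obstacle is in fact harmless (since $\pi_2(\boldsymbol D_j)\subset D(z_j,Kr_{n_j})$, the multiplicity in $J(\tilde f)$ is dominated by the Besicovitch multiplicity downstairs), but the paper's approach shows it is also unnecessary: once you have a measure with $\nu(B(z,r))\gtrsim r^\delta$, the covering and box-dimension bounds are automatic.
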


\begin{proof}
Under the same setting of Theorem \ref{thm_dimension_julia_delta} without the BSC, we shall use the same notations in its proof. 
First, observe that for all $n\in \N$,
                    \begin{equation}\label{inclusion_ita_from_above}
                     \pi_{2}|_{J(\tilde{f})}^{-1}\left(D(z, Kr_{n})  \cap J(S)\right) \supset  \pi_{2}|_{J(\tilde{f})}^{-1}\left(\gamma_{\xi|_{n}}(D(z_{n}, 2^{-1}\varepsilon)  \cap J(S))\right) \supset  \boldsymbol{D}.
                    \end{equation}
Since $\spt(\nu) = J(S)$, we see that for each small  $a>0$ there exists $M(a)>0$ such that $\nu\left(D(\zeta, a)\cap J(S)\right) > M(a)$ for all $\zeta \in J(S)$.  
Let $r>0$ and $n \in \N$ as in (\ref{K4r63f7qhJ}).

Then we obtain
\begin{align*} 
\nu(D(z, Kr_{n})\cap J(S))=&\tilde\nu\left(\pi_{2}|_{J(\tilde{f})}^{-1}\left(D(z, Kr_n)\cap J(S)\right)\right)\\
\geq& \tilde\nu\left(\eta_{\xi|_{n}}\left(\tilde{f}^{n}(\boldsymbol{D})\right)\right)\quad (\because (\ref{inclusion_ita_from_above}))\\
=& \int_{\tilde{f}^{n}(\boldsymbol{D})} \|(\gamma_{\xi|_{n}})^{\prime}\|^{\delta} d \tilde{\nu}  \quad (\because \text{Lemma } \ref{3ni8tG7iYu})\notag\\
\geq&  K^{-\delta}\|(\gamma_{\xi|_{n}})^{\prime}(z_n)\|^\delta \nu\left(D(z_{n}, 2^{-1} \varepsilon) \cap J(S) \right) \quad (\because \text{Lemma } \ref{koebe1})\\
                    \geq&  (2K\varepsilon^{-1})^{-\delta} M(2^{-1} \varepsilon)r_n^\delta
=(2K^2\varepsilon^{-1})^{-\delta} M(2^{-1} \varepsilon)(Kr_{n})^\delta.
\end{align*}
                It follows that
    $$
\frac{\nu(D(z, Kr_{n}))}{(Kr_{n})^\delta} 
\geq  (2K^{2}\varepsilon^{-1})^{-\delta} M(2^{-1} \varepsilon)>0.
$$
    
By Proposition 2.2 in \cite{97_falconer_techniques}, we conclude that $\mathcal{H}^\delta(J(S)) < \infty$ and $\overline \dim_{B}(J(S))\le \delta$.
The assertion has been justified.
\end{proof}

From the two preceding theorems, the corollary below immediately follows.
\begin{cor}\label{pYqFyMRHfD}
Let $S$ be a non-elementary, expanding, irreducible and aperiodic rational GDMS satisfying the backward separating condition.
Then, we have $\dim_{H}J(S) = \delta$ and $0 < \mathcal{H}^{\delta}(J(S)) < \infty$.
\end{cor}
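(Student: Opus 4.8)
The plan is to obtain Corollary~\ref{pYqFyMRHfD} as a direct assembly of the two preceding theorems together with the standard comparison between Hausdorff measure, Hausdorff dimension, and upper box-counting dimension. First I would observe that the hypotheses of the corollary --- non-elementary, expanding, irreducible, aperiodic, plus the backward separating condition --- contain the hypotheses of Theorem~\ref{cI0oM7KYMj} (which does not use the BSC) and coincide with those of Theorem~\ref{thm_dimension_julia_delta}. Hence Theorem~\ref{thm_dimension_julia_delta} applies to give $\mathcal{H}^{\delta}(J(S)) > 0$, and Theorem~\ref{cI0oM7KYMj} applies to give $\mathcal{H}^{\delta}(J(S)) < \infty$ together with $\overline{\dim}_{B}(J(S)) \le \delta$. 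Combining the first two inequalities immediately yields $0 < \mathcal{H}^{\delta}(J(S)) < \infty$, which is one half of the claim.

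For the equality $\dim_{H}(J(S)) = \delta$ I would use two facts. On one side, $\mathcal{H}^{\delta}(J(S)) > 0$ forces $\dim_{H}(J(S)) \ge \delta$, since the Hausdorff dimension is the supremum of those $u \ge 0$ for which $\mathcal{H}^{u}(J(S)) > 0$ (equivalently, $\mathcal{H}^{u}(J(S)) = 0$ for every $u > \delta$ would contradict positivity at $\delta$ only if... more simply, positivity and finiteness at the single exponent $\delta$ pins the dimension to $\delta$). On the other side, the general inequality $\dim_{H}(J(S)) \le \overline{\dim}_{B}(J(S))$ combined with the bound $\overline{\dim}_{B}(J(S)) \le \delta$ from Theorem~\ref{cI0oM7KYMj} gives $\dim_{H}(J(S)) \le \delta$. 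The two inequalities together give $\dim_{H}(J(S)) = \delta$, completing the proof.

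There is no genuine obstacle in this final step: all the substance has been pushed into Theorems~\ref{thm_dimension_julia_delta} and~\ref{cI0oM7KYMj}, which in turn rest on the conformal measure $\tilde{\nu}_{\delta}$ of Lemma~\ref{qQrOdN2Nmt}, the Koebe-type distortion estimates of Lemmas~\ref{koebe1}--\ref{koebe2}, and --- for the lower bound on $\mathcal{H}^{\delta}$ --- the metric separation coming from the BSC. The only point I would state explicitly in the write-up is the verification that the hypotheses of each theorem are implied by those of the corollary, after which the statement is a one-line consequence; accordingly I do not expect this proof to require more than a couple of sentences.
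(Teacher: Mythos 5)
Your proposal is correct and follows exactly the route the paper takes: the corollary is stated there as an immediate consequence of Theorem \ref{thm_dimension_julia_delta} (which gives $\mathcal{H}^{\delta}(J(S))>0$ using the BSC) and Theorem \ref{cI0oM7KYMj} (which gives $\mathcal{H}^{\delta}(J(S))<\infty$ and $\overline{\dim}_{B}(J(S))\le\delta$), with the dimension equality pinned down by positivity and finiteness of the $\delta$-dimensional Hausdorff measure. Nothing further is needed.
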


\section{Example}\label{8dEsBgjjIq}
In this section, we shall use some results of this paper to present an example of an upper estimate of the Hausdorff dimension of a Julia set in $\widehat{\C}$ which is generated by a rational GDMS.

To define a rational GDMS denoted by $S$, we first consider a set of vertices $V_{S} := \{1,2,3\}$ and a set of direct edges $E_{S} := (V_{S} \times V_{S}) \setminus \{(1,2)\}$.
Let $n \in \N$ with $n \geq 5$.
We define rational maps $$g_{1}(z) := z^{n},\quad g_{2}(z) := (z - 3i)^{n} + 3i, \quad g_{3}(z) := (z - 3)^{n} + 3.
                $$
                We define the  family $(\Gamma_{e})_{e \in E_{S}}$ by $\Gamma_{e} := \{g_{t(e)}\}$ for each $e \in E$.

A rational GDMS $S := (V_{S}, E_{S}, (\Gamma_{e})_{e \in E_{S}})$ has now been defined. 
We next define an incidence matrix $A$ associated with the directed graph $(V_{S}, E_{S})$ as follows.
$$
\forall \mathbf{e} \in V_{S} \times V_{S},\quad a_{\mathbf{e}} := 
\begin{cases}1 & \text{if } \mathbf{e} \in E_{S} \\ 0 & \text{if } \mathbf{e} = (1,2) 
\end{cases}
\quad\text{;}\quad 
A := (a_{e})_{e \in E_{S}} := \left(\begin{array}{lll}1 & 0& 1 \\ 1 & 1 & 1 \\ 1 & 1 & 1\end{array}\right).
$$
Let $(\Sigma_{A}, \hat\sigma)$ denote the subshift of finite type over $V_{S}$ associated with $A$, where $\hat\sigma\colon \Sigma_{A} \to \Sigma_{A}$ is the left shift.   
Note that the matrix $A$ is irreducible and aperiodic. Since each $\Gamma_e$ is a singleton, $(\Sigma_{A}, \hat\sigma)$ is topologically conjugate to $(X(S),\sigma)$. Hence, $S$ is irreducible and aperiodic. 
The largest eigenvalue of $A$ will be denoted by $\lambda_{A} := (3 + \sqrt{5})/2$.

Let $G_{S}$ be the  rational semigroup generated by the family $(g_{i})_{i \in V_{S}}$ and $F(G_{S})$ be the Fatou set of $G_{S}$, and the post-critical set of $G_{S}$ is defined by $P(G_{S}):=\overline{\bigcup_{h \in G_{S}}\{\text{all critical values of } h\}} \subset \widehat{\C}$.
We write $\CP$ to refer to the set of all the critical points of elements of $\{g_{i}: i \in V_{S}\}$.

Then, one may check that the system $S$ is hyperbolic as follows.
Let 
$$
K_{S} := \{|z| \geq 5\} \cup \bigcup_{i \in V_{S}}\{|z - p_{i}| \leq 2^{-1}\} \subset F(G_{S}).
$$
Then, it immediately follows that $g(\CP) \subset K_{S}$ for each $g \in \{g_{i} : i \in V_{S}\}$.
Hence, any small open neighborhood $g(\mathcal{N}_{S}) \subset \mathcal{N}_{S}$ for  each element of $\{g_{i} : i \in V_{S}\}$.
Then, Montel's theorem implies that $\mathcal{N}_{S} \subset F(G_{S})$, and thus $P(G_{S}) \subset F(G_{S})$.
Thus, we conclude that the system $S$ is hyperbolic.

Let $\tilde{f}_{S}$ be a skew product map associated with $S$ and write $\tilde{f} := \tilde{f}_{S}|_{J(\tilde{f}_{S})}$ for simplicity.
Now we see that Proposition \ref{Oqeiwu19LMc} implies that $S$ is expanding. 
Let $\delta(S)$ denote the unique zero of the topological pressure $P(\tilde{f}, t\tilde\varphi)$, where $\tilde\varphi$ and $P(\tilde{f}, t\tilde\varphi)$ are as in subsection \ref{qdXaxWWPnV}.
Also, by \ref{Lg4K3pcyFt} and \ref{cmu6vrJlgl} of Lemma \ref{qQrOdN2Nmt}, there exists an equilibrium state $\hat{\nu}$ of $(\tilde{f}, \delta(S)\tilde\varphi)$ and we obtain
$$
\delta(S) = \frac{h_{\hat{\nu}}(\tilde{f})}{\int \log |\tilde{f}^{\prime}|d\hat{\nu}}.
$$

By \cite[Theorem B, (ii)]{MR1785403}, we have  $h_{\hat{\nu}}(\tilde{f}) \leq h_{{\pi_{1}}_{*}\hat{\nu}}(\sigma) + \log n$. Hence, $h_{\hat{\nu}}(\tilde{f}) \leq h_{\text{top}}(\sigma)+ \log n$. 
Since the pairs $(X(S), \sigma)$ and $(\Sigma_{A}, \hat\sigma)$ are topologically conjugated by some map $\psi: \Sigma_{A}  \to X(S)$ given by $(\omega_{n})_{n \in \N} \mapsto (g_{\omega_{n + 1}}, (\omega_{n}, \omega_{n + 1}))_{n \in \N}$, we have $h_{\text{top}}(\sigma) = h_{\text{top}}(\hat{\sigma}) =\log\lambda_{A}$.

One may verify that the system $S$ satisfies the BSC as follows.
Let $U :=\{|z| \leq 5\}$.
Then, ${g_{1}}^{-1}(U) \subset \{|z| \leq \sqrt[5]{5}\}<\{|z|<1.4\}$, 
$g_2^{-1}(U) \subset\{|z-3i|<\sqrt[5]{8}\} \subset\{|z-3i|<1.55\}$ and $g_3^{-1}(U) \subset\{|z - 3| \leq \sqrt[5]{8}\}<\{|z - 3|<1.55\}$.
Thus, $g_i^{-1}(U) \subset U$, for each $i \in V_{S}$ and $g_i^{-1}(U) \cap g_j^{-1}(U)=\emptyset$ for $i \neq j$.
Since $J(S) \subset J(G_{S}) \subset U$, one may conclude that $S$ satisfies the BSC.

Therefore, Corollary \ref{pYqFyMRHfD} yields
\begin{align*}
\dim_{H}(J(S)) = \delta(S) = \frac{h_{\hat{\nu}}(\tilde{f})}{\int \log |\tilde{f}^{\prime}|d\hat{\nu}} \leq \frac{1}{\log n}\left(\log n + \log\frac{3 + \sqrt{5}}{2} \right) < 1 +  \frac{\log(3 + \sqrt{5})2^{-1}}{\log 5} < 1.6
\end{align*}
and $0 < \mathcal{H}^{\delta(S)}(J(S)) < \infty$.


\begin{thebibliography}{Sum21}

\bibitem[Bea00]{beardon2000iteration}
Alan~F Beardon.
\newblock {\em Iteration of rational functions: Complex analytic dynamical
  systems}, volume 132.
\newblock Springer Science \& Business Media, 2000.

\bibitem[Bow79]{MR0556580}
Rufus Bowen.
\newblock Hausdorff dimension of quasicircles.
\newblock {\em Inst. Hautes \'{E}tudes Sci. Publ. Math.}, (50):11--25, 1979.

\bibitem[Bow08]{MR2423393}
Rufus Bowen.
\newblock {\em Equilibrium states and the ergodic theory of {A}nosov
  diffeomorphisms}, volume 470 of {\em Lecture Notes in Mathematics}.
\newblock Springer-Verlag, Berlin, revised edition, 2008.
\newblock With a preface by David Ruelle, Edited by Jean-Ren\'{e} Chazottes.

\bibitem[DU91]{MR1014246}
Manfred Denker and Mariusz Urba\'{n}ski.
\newblock On the existence of conformal measures.
\newblock {\em Trans. Amer. Math. Soc.}, 328(2):563--587, 1991.

\bibitem[Fal97]{97_falconer_techniques}
Kenneth Falconer.
\newblock {\em Techniques in fractal geometry}.
\newblock Wiley Chichester, 1997.

\bibitem[GR96]{96gongren}
Zhimin Gong and Fuyao Ren.
\newblock A random dynamical system formed by infinitely many functions.
\newblock {\em Journal of Fudan University. Natural Science}, 35, 01 1996.

\bibitem[HM96]{MR1397693}
A.~Hinkkanen and G.~J. Martin.
\newblock The dynamics of semigroups of rational functions. {I}.
\newblock {\em Proc. London Math. Soc. (3)}, 73(2):358--384, 1996.

\bibitem[Jon00]{MR1785403}
Mattias Jonsson.
\newblock Ergodic properties of fibered rational maps.
\newblock {\em Ark. Mat.}, 38(2):281--317, 2000.

\bibitem[KU23]{MR4530063}
Janina Kotus and Mariusz Urba\'{n}ski.
\newblock {\em Meromorphic dynamics. {V}ol. {I}. {A}bstract ergodic theory,
  geometry, graph directed {M}arkov systems, and conformal measures}, volume~46
  of {\em New Mathematical Monographs}.
\newblock Cambridge University Press, Cambridge, 2023.

\bibitem[Mil11]{milnor2011dynamics}
John Milnor.
\newblock {\em Dynamics in One Complex Variable.(AM-160):(AM-160)-}, volume
  160.
\newblock Princeton University Press, 2011.

\bibitem[Rue82]{MR0684247}
David Ruelle.
\newblock Repellers for real analytic maps.
\newblock {\em Ergodic Theory Dynam. Systems}, 2(1):99--107, 1982.

\bibitem[Rue04]{MR2129258}
David Ruelle.
\newblock {\em Thermodynamic formalism}.
\newblock Cambridge Mathematical Library. Cambridge University Press,
  Cambridge, second edition, 2004.
\newblock The mathematical structures of equilibrium statistical mechanics.

\bibitem[Sin72]{MR0399421}
Ja.~G. Sina\u{\i}.
\newblock Gibbs measures in ergodic theory.
\newblock {\em Uspehi Mat. Nauk}, 27(4(166)):21--64, 1972.

\bibitem[Sta12]{MR2900562}
Rich Stankewitz.
\newblock Density of repelling fixed points in the {J}ulia set of a rational or
  entire semigroup, {II}.
\newblock {\em Discrete Contin. Dyn. Syst.}, 32(7):2583--2589, 2012.

\bibitem[Sum97]{MR1625944}
Hiroki Sumi.
\newblock On dynamics of hyperbolic rational semigroups.
\newblock {\em J. Math. Kyoto Univ.}, 37(4):717--733, 1997.

\bibitem[Sum98]{sumi1998hausdorff}
Hiroki Sumi.
\newblock On hausdorff dimension of julia sets of hyperbolic rational
  semigroups.
\newblock {\em Kodai Mathematical Journal}, 21(1):10--28, 1998.

\bibitem[Sum00]{MR1767945}
Hiroki Sumi.
\newblock Skew product maps related to finitely generated rational semigroups.
\newblock {\em Nonlinearity}, 13(4):995--1019, 2000.

\bibitem[Sum01]{MR1827119}
Hiroki Sumi.
\newblock Dynamics of sub-hyperbolic and semi-hyperbolic rational semigroups
  and skew products.
\newblock {\em Ergodic Theory Dynam. Systems}, 21(2):563--603, 2001.

\bibitem[Sum05]{MR2153926}
Hiroki Sumi.
\newblock Dimensions of {J}ulia sets of expanding rational semigroups.
\newblock {\em Kodai Math. J.}, 28(2):390--422, 2005.

\bibitem[Sum11]{MR2747724}
Hiroki Sumi.
\newblock Random complex dynamics and semigroups of holomorphic maps.
\newblock {\em Proc. Lond. Math. Soc. (3)}, 102(1):50--112, 2011.

\bibitem[Sum13]{MR3084426}
Hiroki Sumi.
\newblock Cooperation principle, stability and bifurcation in random complex
  dynamics.
\newblock {\em Adv. Math.}, 245:137--181, 2013.

\bibitem[Sum21]{MR4268827}
Hiroki Sumi.
\newblock Negativity of {L}yapunov exponents and convergence of generic random
  polynomial dynamical systems and random relaxed {N}ewton's methods.
\newblock {\em Comm. Math. Phys.}, 384(3):1513--1583, 2021.

\bibitem[SW19]{MR4002398}
Hiroki Sumi and Takayuki Watanabe.
\newblock Non-i.i.d. random holomorphic dynamical systems and the probability
  of tending to infinity.
\newblock {\em Nonlinearity}, 32(10):3742--3771, 2019.

\bibitem[SW22]{MR4407234}
Hiroki Sumi and Takayuki Watanabe.
\newblock Non-i.i.d. random holomorphic dynamical systems and the generic
  dichotomy.
\newblock {\em Nonlinearity}, 35(4):1857--1875, 2022.

\bibitem[Wal82]{MR648108}
Peter Walters.
\newblock {\em An introduction to ergodic theory}, volume~79 of {\em Graduate
  Texts in Mathematics}.
\newblock Springer-Verlag, New York-Berlin, 1982.

\end{thebibliography}
\end{document}